\def\newmathop#1{\expandafter\gdef\csname #1\endcsname{\mathop{\rm #1}\nolimits}}
\theoremstyle{theorem}
\newcounter{thmcount}
\newtheorem{theorem}[thmcount]{Theorem}
\newtheorem{corollary}[thmcount]{Corollary}
\newtheorem{lemma}[thmcount]{Lemma}
\newtheorem{conjecture}[thmcount]{Conjecture}
\newtheorem*{conjecture*}{Conjecture}
\theoremstyle{definition}
\newtheorem{remark}[thmcount]{Remark}
\newtheorem{example}[thmcount]{Example}
\newtheorem*{remark*}{Remark}
\newtheorem*{problem*}{Problem}
\newtheorem{problem}[thmcount]{Problem}
\newtheorem{definition}[thmcount]{Definition}
\newtheorem{notation}[thmcount]{Notation}
\def\Q{{\mathbb Q}}
\def\F{{\mathbb F}}
\def\Qb{\overline{\mathbb Q}}
\def\Z{{\mathbb Z}}
\def\R{{\mathbb R}}
\def\C{{\mathbb C}}
\def\cO{{\mathcal O}}
\def\Eta{{\mathrm H}}
\def\sL{{\mathscr L}}
\def\p{{\mathfrak p}}
\def\f{{\mathfrak f}}
\def\fF{{\mathfrak F}}
\def\fG{{\mathfrak G}}
\def\fg{{\mathfrak g}}
\def\triv{{\mathbf 1}}
\def\LT{\widehat{L}}
\def\fF{\mathfrak{F}}
\newcommand{\isomto}{\;\overset{\sim}{\longrightarrow}}
\newcommand{\too}{\longrightarrow}
\newcommand{\eps}{\varepsilon}
\font\tencyr=wncyr10
\def\sha{\text{\tencyr\cyracc{Sh}}}
\begin{document}

\let\introdagger\dagger

\title{On a BSD-type formula for $L$-values of Artin twists of elliptic curves}

\author{Vladimir Dokchitser, Robert Evans, Hanneke Wiersema}

\address{University College London, 25 Gordon Street, London WC1H 0AY, UK}
\email{v.dokchitser@ucl.ac.uk}
\address{King's College London, Strand, London WC2R 2LS, UK}
\email{robert.evans@kcl.ac.uk}
\address{King's College London, Strand, London WC2R 2LS, UK}
\email{hanneke.wiersema@kcl.ac.uk}

\subjclass[2010]{11G40 (11G05, 14G10)}

\begin{abstract}
This is an investigation into the possible existence and consequences of a Birch--Swinnerton-Dyer--type formula for $L$-functions of elliptic curves twisted by Artin representations. We translate expected properties of $L$-functions into purely arithmetic predictions for elliptic curves, and show that these force some peculiar properties of the Tate-Shafarevich group, which do not appear to be tractable by traditional Selmer group techniques. In particular we exhibit settings where the different $p$-primary components of the Tate--Shafarevich group do not behave independently of one another. We also give examples of ``arithmetically identical'' settings for elliptic curves twisted by Artin representations, where the associated $L$-values can nonetheless differ, in contrast to the classical Birch--Swinnerton-Dyer conjecture.
\end{abstract}

\maketitle
\tableofcontents


\section{Introduction}

The Birch--Swinnerton-Dyer conjecture classically provides a connection between the arithmetic of elliptic curves and their $L$-functions. This link is in many ways still mysterious. Indeed, some properties of $L$-functions do not obviously correspond to arithmetic properties of elliptic curves and vice versa, a classical example being the compatibility of the conjecture with isogenies, which is a highly non-trivial theorem of Cassels. In this article we focus on factorisation of $L$-functions: when $E/\Q$ is an elliptic curve and $F/\Q$ a finite extension, $L(E/F,s)$ factorises as a product of $L$-functions of twists of $E$ by Artin representations $L(E,\rho,s)$. We investigate what standard conjectures say specifically for these twisted $L$-functions.
 Ideally, we would like to give a BSD-type formula for the leading term at $s\!=\!1$ for $L(E,\rho,s)$, but, as we shall explain, there is a significant barrier to this. However, we shall provide a tool for extracting explicit arithmetic predictions, and illustrate its use by exhibiting new phenomena about the behaviour of Tate--Shafarevich groups, Selmer groups and rational points.

\subsection{BSD formula for Artin twists}

The Birch--Swinnerton-Dyer conjecture states that
$$
\ord_{s=1} L(E/F,s) = \rk E/F,
$$
and that the leading term of the Taylor series at $s\!=\!1$ of the $L$-function is given by
\[
 \lim_{s\to 1}\frac{L(E/F,s)}{(s-1)^r} \cdot\frac{\sqrt{|\Delta_F|}}{\Omega_+(E)^{r_1+r_2}|\Omega_-(E)|^{r_2}}=\frac{\Reg_{E/F}|\sha_{E/F}| C_{E/F}}{|E(F)_{\tors}|^2},
 \tag{$\dagger$}\label{bsd}
\]
where $r$ is the order of the zero, $(r_1,r_2)$ is the signature of $F$, $\Omega_{\pm}$ are the periods of $E$ and $C_{E/F}$ is the product of Tamagawa numbers and other local fudge factors from finite places (see \S\ref{ss:notation}).
Of course, the formula implicitly assumes that the Tate--Shafarevich group $\sha_{E/F}$ is finite.
We will refer to the expression on the right-hand side of \eqref{bsd} as $\BSD(E/F)$. 

Just as the Dedekind $\zeta$-function can be expressed as a product of Artin $L$-functions, so the $L$-function $L(E/F,s)$ can be written as a product of twisted $L$-functions $L(E,\rho,s)$ for Artin representations that factor through the Galois closure of $F/\Q$.
The (conjectural) analogue of the Birch--Swinnerton-Dyer rank formula is well known in this context (see e.g. \cite{Rohrlich1990} \S2):
\begin{conjecture}\label{conj:equivbsd}
For an elliptic curve $E/\Q$ and an Artin representation $\rho$ over $\Q$,
$$
\ord_{s=1} L(E,\rho,s) = \langle \rho, E(K)_\C\rangle.
$$
\end{conjecture}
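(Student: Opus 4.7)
The plan is to deduce Conjecture~\ref{conj:equivbsd} from the classical rank part of the Birch--Swinnerton-Dyer conjecture, applied over all finite extensions $F/\Q$ through which $\rho$ factors, combined with Artin's induction theorem. The argument is essentially a formal exercise once one grants BSD for all such $F$; no analytic input beyond the Artin formalism is needed.

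Fix a finite Galois extension $K/\Q$ through which $\rho$ factors, and set $G=\Gal(K/\Q)$. I would define two $\Q$-linear functionals on the virtual character ring $R(G)\otimes\Q$:
$$
a(\sigma) = \ord_{s=1} L(E,\sigma,s), \qquad b(\sigma) = \langle \sigma, E(K)_\C\rangle.
$$
Both are additive on direct sums and so extend uniquely from genuine characters to $R(G)\otimes\Q$. The goal is to show $a(\rho)=b(\rho)$ for every irreducible $\rho$.

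The first step is to check the equality $a=b$ on the special class of characters $\Ind_H^G\triv$. If $H\le G$ corresponds to the subfield $F=K^H$, then Frobenius reciprocity gives
$$
b(\Ind_H^G\triv) = \dim_\C E(K)_\C^H = \dim_\C(E(F)\otimes\C) = \rk E/F,
$$
while the inductivity of Artin $L$-functions gives $L(E,\Ind_H^G\triv,s)=L(E/F,s)$ and hence $a(\Ind_H^G\triv) = \ord_{s=1} L(E/F,s)$. So the identity $a(\Ind_H^G\triv)=b(\Ind_H^G\triv)$ is precisely the rank statement of BSD for $E/F$, which is the input I would assume.

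The second step is to invoke Artin's induction theorem: every character of $G$ lies in the $\Q$-linear span of $\{\Ind_H^G\triv : H\le G \text{ cyclic}\}$. Since $a$ and $b$ are $\Q$-linear and agree on this spanning set by the previous step, they agree on every $\rho\in R(G)\otimes\Q$, proving the conjecture.

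The main (and in fact only) obstacle is that BSD is itself open, which is why Conjecture~\ref{conj:equivbsd} is presented as a conjecture rather than a theorem. Beyond granting BSD for the relevant subfields, the reduction is purely formal: it uses Artin induction for the $L$-function side, the Galois-module description $E(F)_\C = E(K)_\C^{\Gal(K/F)}$ on the arithmetic side, and the fact that rational coefficients suffice in Artin's theorem (both sides of the target identity are integers, so no integrality subtleties arise when solving the resulting linear system).
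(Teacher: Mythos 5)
The paper does not prove this statement: it is presented as a conjecture (the folklore rank part of BSD for Artin twists, with a pointer to Rohrlich), so there is no proof of record to compare against. What you have written is a conditional reduction to the BSD rank formula over number fields. That is a sensible thing to attempt, but as written it contains a genuine gap.

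The gap is in your appeal to Artin induction. Artin's theorem does \emph{not} say that every character of $G$ lies in the $\Q$-span of the permutation characters $\Ind_H^G\triv$; it says every character is a $\Q$-linear combination of characters $\Ind_H^G\psi$ induced from \emph{arbitrary} one-dimensional characters $\psi$ of cyclic subgroups. The $\Q$-span of the $\Ind_H^G\triv$ alone is exactly the space of virtual characters with $\Q$-valued trace (this is the content of Remark \ref{rmk:rationaltrace}). Already for $G=C_3$ and $\chi$ faithful, the permutation characters span only $\triv$ and $\chi\oplus\bar\chi$, so your functionals $a$ and $b$ are forced to agree only on $\chi\oplus\bar\chi$: you obtain $\ord_{s=1}L(E,\chi,s)+\ord_{s=1}L(E,\bar\chi,s)=2\langle\chi,E(K)_\C\rangle$ (using that $E(K)_\C$ has rational character, so $\langle\chi,E(K)_\C\rangle=\langle\bar\chi,E(K)_\C\rangle$), not the equality for $\chi$ itself. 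Thus your argument proves the conjecture only for $\rho$ with $\Q$-valued character, and in general only the average over the $\Gal(\Q(\rho)/\Q)$-orbit of $\rho$. To split the orbit sum you need an extra conjectural input — e.g.\ that $\ord_{s=1}L(E,\rho^\fg,s)$ is independent of $\fg$, which is part (1) of Deligne's conjecture as recalled in \S\ref{periodconj} — or you must use the full Artin/Brauer theorem, which brings in $L$-functions $L(E/F,\psi,s)$ of non-trivial abelian twists over intermediate fields whose orders of vanishing are not supplied by BSD over number fields. A smaller point: $\ord_{s=1}L(E,\rho,s)$ is only defined once $L(E,\rho,s)$ is known to continue meromorphically near $s=1$, so the claim that no analytic input is needed beyond Artin formalism is an overstatement.
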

\noindent Here, and throughout, $E(K)_\C=E(K)\otimes_\Z\C$ where $K$ is any finite Galois extension of $\Q$ such that $\rho$ factors through $\Gal(K/\Q)$, and $\langle\cdot, \cdot\rangle$ denotes the usual representation theoretic inner product of characters. In other words, the conjecture predicts that, for an (irreducible) $\rho,$ \linebreak the order of vanishing of $L(E,\rho,s)$ is the ``multiplicity'' of $\rho$ in the group of $K$-rational points of $E$.

However, the situation with the second part of the Birch--Swinnerton-Dyer conjecture appears to be much more difficult. 

\begin{problem}
Formulate a BSD-like formula for the leading term at $s\!=\!1$ of $L(E,\rho,s)$.
\end{problem}

There appears to be a barrier to finding such an expression, as there are ``arithmetically identical'' settings giving rise to different $L$-values. We write $\sL(E,\rho)$ for the modification of the leading term of $L(E,\rho,s)$ analogous to the left-hand side
of \eqref{bsd} (see Definition \ref{def:lalg}).

\begin{example}[see also \S\ref{s:hanneke}]
\label{ex:introha}
The elliptic curves with Cremona labels $E\!=$307a1 and $E'\!=$307c1 have the same conductor, same discriminant, no rational points, trivial Tate--Shafarevich group and trivial local Tamagawa numbers both over $\Q$ and over $\Q(\zeta_{11})^+$. However, for a Dirichlet character $\chi$ of order 5 and conductor 11, $\sL(E,\chi)\neq \sL(E',\chi)$. Specifically, $\sL(E,\chi)=1$, while $\sL(E',\chi)=(\frac{1\pm\sqrt 5}{2})^2$, the sign of $\pm\sqrt 5$ depending on the choice of~$\chi$.
\end{example}


\subsection{An arithmetic conjecture and its consequences}

We will not propose an exact expression for the hypothetical $\BSD(E,\rho)$ term for the conjectural formula
$$
\sL(E,\rho) =\BSD(E,\rho).
$$
However, based on the behaviour of $L$-functions, we will show that $\BSD(E,\rho)$ must satisfy the list of properties given in Conjecture \ref{conj:main} below. One of the roles of $\BSD(E,\rho)$ is that it lets one decompose the Birch--Swinnerton-Dyer quotient $\BSD(E/F)$ according to Artin representations, analogously to the factorisation of $L$-functions. This may at first glance look almost vacuous, but, as we will explain, the existence of such a decomposition has a range of consequences for Selmer groups, Tate--Shafarevich groups and ranks of elliptic curves.

We write $\Q(\rho)$ for the field generated by the values of the character of $\rho$, write $\rho^*$ for the dual representation, and $w_\rho$ and $w_{E,\rho}$ for the root number of $\rho$ and of the twist of $E$ by $\rho$, respectively.

\begin{conjecture}
\label{conj:main}
Let $E/\Q$ be an elliptic curve. For every Artin representation $\rho$ over $\Q$ there is an invariant $\BSD(E,\rho)\in\C^\times$ with the following properties.
Let $\rho$ and $\tau$ be Artin representations that factor through $\Gal(K/\Q).$
\begin{enumerate}[leftmargin=2em]
\item[C1.] $\BSD(E/F)=\BSD(E,\Ind_{F/\Q}\triv)$ for a number field $F$ (and $\sha_{E/F}$ is finite). 
\item[C2.]  $\BSD(E,\rho\oplus\tau)=\BSD(E,\rho)\BSD(E,\tau).$
\item[C3.] $\displaystyle\BSD(E,\rho)= \BSD(E,\rho^*)\cdot (-1)^r w_{E,\rho}w_\rho^{-2}$, where $r=\langle \rho, E(K)_\C\rangle$.
\item[C4.] If $\rho$ is self-dual, then $\BSD(E,\rho)\in \R$ and $\,\sign\BSD(E,\rho)=\sign w_\rho$.
\end{enumerate}
If $\langle \rho, E(K)_\C \rangle = 0$, then moreover:
\begin{enumerate}[leftmargin=2em]
\item[C5.] $\BSD(E,\rho)\in\Q(\rho)^\times$ and\, $\BSD(E,\rho^\fg)=\BSD(E,\rho)^\fg$ for all $\fg \in \Gal(\Q(\rho)/\Q)$.
\item[C6.] If $\rho$ is a non-trivial primitive Dirichlet character of order $d,$ and either the conductors of $E$ and $\rho$ are coprime or
$E$ is semistable and has no non-trivial isogenies over $\Q$, then $\BSD(E,\rho)\in\Z[\zeta_d].$
\end{enumerate}
\end{conjecture}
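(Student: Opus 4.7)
The plan is to set $\BSD(E,\rho):=\sL(E,\rho)$, the modified leading term from Definition~\ref{def:lalg}, and then to derive each of C1--C6 as a consequence of standard analytic and conjectural arithmetic properties of the twisted $L$-functions $L(E,\rho,s)$. In other words, the ``conjecture'' becomes a theorem conditional on the classical BSD formula over arbitrary number fields, the analytic continuation and expected functional equation of $L(E,\rho,s)$, and Deligne-style algebraicity of critical values.

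For C1 I would use the Artin-formalism identity $L(E/F,s)=L(E,\Ind_{F/\Q}\triv,s)$ together with the matching of the normalising factors $\sqrt{|\Delta_F|}$, $\Omega_+^{r_1+r_2}$ and $|\Omega_-|^{r_2}$ in \eqref{bsd} against the corresponding invariants attached to $\Ind_{F/\Q}\triv$ in the definition of $\sL$; this turns C1 into a direct repackaging of \eqref{bsd} applied to $E/F$. For C2, additivity $L(E,\rho\oplus\tau,s)=L(E,\rho,s)L(E,\tau,s)$ together with additivity of the conductor and archimedean normalisations forces $\sL(E,\rho\oplus\tau)=\sL(E,\rho)\sL(E,\tau)$.

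For C3 and C4, I would invoke the (conjectural) functional equation relating $L(E,\rho,s)$ to $L(E,\rho^*,2-s)$ with global root number $w_{E,\rho}$. Carefully tracking the archimedean $\Gamma$-factors and conductors through the functional equation and comparing leading coefficients at $s=1$ yields C3: the $(-1)^r$ comes from the $(s-1)^r\leftrightarrow(1-s)^r$ sign flip, and the $w_\rho^{-2}$ from the discrepancy between the periods that normalise $\sL(E,\rho)$ and $\sL(E,\rho^*)$. Specialising C3 to $\rho=\rho^*$ (so $w_\rho=\pm 1$ and $w_{E,\rho}=\pm 1$) simultaneously gives reality of $\sL(E,\rho)$ and pins its sign down as $\sign w_\rho$, which is C4.

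For C5 and C6, the hypothesis $\langle\rho,E(K)_\C\rangle=0$ combined with Conjecture~\ref{conj:equivbsd} forces $L(E,\rho,1)\neq 0$, so $\sL(E,\rho)$ is a genuine (non-leading) central value divided by periods. C5 is then the Deligne--Shimura-type algebraicity and $\Gal(\Q(\rho)/\Q)$-equivariance statement for critical values of Artin twists of the weight-two modular form attached to $E$. For C6, in the Dirichlet character case one expresses $L(E,\chi,1)/\Omega_\pm(E)$ as a $\Z[\zeta_d]$-linear combination of modular-symbol values along cusps of $X_0(N_E)$; the two alternative hypotheses (coprime conductors, or semistability with no non-trivial rational isogenies) are exactly those under which the Manin constant is known to be a $p$-adic unit at the relevant primes. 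The main obstacle is C6: once one accepts the functional equation and Deligne's conjecture, C1--C5 are essentially formal, but the integrality in C6 genuinely requires control of the Manin constant, which is why the two restrictive classes of $(E,\rho)$ must be imposed.
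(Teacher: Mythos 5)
Your overall strategy is exactly the paper's: set $\BSD(E,\rho):=\sL(E,\rho)$ and derive C1--C6 from Artin formalism, the functional equation, Deligne's period conjecture and the Manin constant input (this is Corollary~\ref{cor:main}, resting on Theorem~\ref{thm:lfunctions}). Your treatments of C1, C2, C3, C5 and C6 match the paper's in substance, modulo one cosmetic slip in C3: the factor $w_\rho^{-2}$ does not come from the periods (which satisfy $d^\pm(\rho^*)=d^\pm(\rho)$ and so are identical for $\rho$ and $\rho^*$) but from the root numbers $w_\rho$, $w_{\rho^*}$ in the normalising denominator of $\sL$, via $w_\rho w_{\rho^*}=1$. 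You should also note that matching the exponent $r$ in C3 with $\langle\rho,E(K)_\C\rangle$ requires Conjecture~\ref{conj:equivbsd}.

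There is, however, a genuine gap in your derivation of C4. Specialising C3 to $\rho\simeq\rho^*$ gives $\sL(E,\rho)=\sL(E,\rho)\cdot(-1)^r w_{E,\rho}w_\rho^{-2}$, which (since $w_\rho=\pm1$) collapses to the parity statement $w_{E,\rho}=(-1)^r$ and says nothing whatsoever about $\sL(E,\rho)$ itself --- neither that it is real nor what its sign is. The functional equation cannot determine the sign of a leading coefficient. The paper proves reality by observing that for self-dual $\rho$ the Dirichlet series has real coefficients, so $L(E,\rho,s)=\overline{L(E,\rho,\overline{s})}$ by analytic continuation and hence $\LT(E,\rho,1)\in\R$; and it proves $w_\rho\cdot\sL(E,\rho)>0$ by combining positivity of the Euler product for large real $s$ with the \emph{Riemann hypothesis} for $L(E,\rho,s)$ (to exclude real zeros in $(1,\infty)$) and the intermediate value theorem, forcing $\LT(E,\rho,1)>0$. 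The Riemann hypothesis is an essential hypothesis here --- it appears explicitly in Theorem~\ref{lotsofconj} --- and your proposal never invokes it, so as written your argument for C4 does not go through.
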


\begin{theorem}[see Corollary \ref{cor:main}]\label{lotsofconj}
Conjecture \ref{conj:main} holds assuming the analytic continuation of $L$-functions $L(E,\rho,s)$, their functional equation, the Birch--Swinnerton-Dyer conjecture, Deligne's period conjecture, Stevens's Manin constant conjecture for $E/\Q$ and the Riemann hypothesis for $L(E,\rho,s)$.
\end{theorem}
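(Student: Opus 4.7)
The plan is to \emph{define} $\BSD(E,\rho):=\sL(E,\rho)$ and verify each of C1--C6 as an identity for the normalized leading term of $L(E,\rho,s)$ at $s=1$. With this choice, C1 unpacks as $\sL(E/F)=\sL(E,\Ind_{F/\Q}\triv)$, which holds by Artin formalism $L(E/F,s)=L(E,\Ind_{F/\Q}\triv,s)$ together with the compatible identification of period, discriminant, and $\Gamma$-factor normalizations; the equality $\sL(E/F)=\BSD(E/F)$ is then precisely BSD for $E/F$, with finiteness of $\sha_{E/F}$ supplied by the same hypothesis. C2 is immediate from $L(E,\rho\oplus\tau,s)=L(E,\rho,s)L(E,\tau,s)$ and the multiplicativity of the normalization under direct sums.

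For C3 I would apply the functional equation $L^*(E,\rho,s)=w_{E,\rho}L^*(E,\rho^*,2-s)$ of completed $L$-functions at $s=1$, divide by $(s-1)^r$, and re-extract the normalizations: the $(-1)^r$ arises from $(1-s)^r$, and the factor $w_\rho^{-2}$ comes out because $\sL$ is normalized with $\sqrt{N_\rho}$ (the Artin conductor of $\rho$) while the functional equation of $L(\rho,s)$ itself contributes $w_\rho^2$ when conductor factors are attributed symmetrically to $\rho$ and $\rho^*$. C4 then follows: self-duality makes the Dirichlet coefficients of $L(E,\rho,s)$ real, hence $\sL(E,\rho)\in\R$; applying C3 to $\rho=\rho^*$ gives $\sign\BSD(E,\rho)=\sign((-1)^r w_{E,\rho}w_\rho^{-2})=\sign w_\rho$, with the Riemann hypothesis for $L(E,\rho,s)$ ensuring there are no extra real zeros forcing sign changes between the region of convergence and $s=1$.

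For C5 and C6, the hypothesis $\langle\rho,E(K)_\C\rangle=0$ combined with Conjecture \ref{conj:equivbsd} (itself a consequence of the rank part of BSD over the fixed field of $\ker\rho$) gives $L(E,\rho,1)\neq 0$, so $\sL(E,\rho)$ is just $L(E,\rho,1)$ divided by an explicit product of periods and discriminant factors. C5 then becomes an instance of Deligne's period conjecture: the ratio $L(E,\rho,1)/c^+(E\otimes\rho)$ lies in $\Q(\rho)$ and is Galois-equivariant in the required sense, and one verifies that Deligne's Betti--de Rham period $c^+$ matches the normalization in $\sL$ up to $\Q(\rho)^\times$-factors (this is where Stevens's Manin constant conjecture pins down the integral lattice in the de Rham realization of $E$). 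For C6 with $\rho=\chi$ a primitive Dirichlet character of order $d$, I would use the standard modular-symbol formula expressing $L(E,\chi,1)\tau(\bar\chi)/\Omega^\pm$ as a $\Z[\zeta_d]$-linear combination of modular symbols of $E$ evaluated at cusps of the form $a/N_\chi$; Stevens's conjecture (with Manin constant $c_E=1$) together with the coprime-conductor or semistable-without-isogenies hypothesis forces these modular symbols to lie in $\Omega^\pm\Z$, and inverting $\tau(\bar\chi)\in\Z[\zeta_d]$ yields $\BSD(E,\chi)\in\Z[\zeta_d]$.

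The main obstacle is the bookkeeping in C3: keeping track of the Artin conductor $N_\rho$, the $\Gamma_\R/\Gamma_\C$ archimedean factors, and the period ratios through the functional equation so that everything collapses to exactly $(-1)^r w_{E,\rho}w_\rho^{-2}$ without any spurious factors. A secondary difficulty is the matching in C5 between Deligne's Betti--de Rham period and the transcendental normalization in $\sL$ only up to $\Q(\rho)^\times$-factors (as opposed to factors in some larger cyclotomic extension), which is precisely where the Manin constant hypothesis plays its essential role.
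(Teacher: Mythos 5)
Your overall architecture is exactly the paper's: set $\BSD(E,\rho):=\sL(E,\rho)$ and verify C1--C6 one at a time from Artin formalism, the functional equation, BSD over number fields, Deligne's conjecture and RH (this is Corollary \ref{cor:main}, resting on Theorem \ref{thm:lfunctions}). However, two of your individual verifications contain genuine errors.

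First, your derivation of the sign statement in C4 does not work. For self-dual $\rho$ one has $w_\rho=\pm1$, so C3 applied to $\rho\simeq\rho^*$ collapses to the identity $(-1)^rw_{E,\rho}=1$ (a parity statement); it says nothing about the sign of $\BSD(E,\rho)$ itself, and the chain $\sign\BSD(E,\rho)=\sign((-1)^rw_{E,\rho}w_\rho^{-2})=\sign w_\rho$ is neither a valid deduction nor a true identity (the middle quantity equals $+1$ whenever the argument applies, while $w_\rho$ may be $-1$). The correct mechanism, which you mention only parenthetically, is the one the paper uses: RH forces $L(E,\rho,s)\neq0$ on $(1,\infty)$, the Euler product makes $L(E,\rho,s)>0$ for large real $s$, and continuity plus the intermediate value theorem give $\LT(E,\rho,1)>0$; since the periods are positive and $\sL$ carries $w_\rho^{-1}=w_\rho$ in front, $\sign\sL(E,\rho)=\sign w_\rho$. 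Relatedly, in C3 the exponent $r$ is the arithmetic multiplicity $\langle\rho,E(K)_\C\rangle$, whereas the functional equation produces $(-1)^{\ord_{s=1}L(E,\rho,s)}$; you need Conjecture \ref{conj:equivbsd} to identify the two, which the paper lists explicitly as a hypothesis for C3.

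Second, you misplace the role of Stevens's Manin constant conjecture. It is not needed to match Deligne's period of $h^1(E)(1)\otimes[\tau]$ with the normalization in $\sL$ modulo $\Q(\rho)^\times$: any two differentials on $E/\Q$ differ by a rational scalar, so $\Omega_\pm(E)$ is already well defined up to $\Q^\times$, and the paper's Theorem \ref{motivecalc} together with Deligne's formula for $\det(I_{[\tau],\infty})$ in terms of $\eps(\tau)$ gives $c^+=\Omega_+(E)^{d^+(\rho)}|\Omega_-(E)|^{d^-(\rho)}w_\rho/\sqrt{\f_\rho}$ mod $\fF^\times$ with no modular input. This tensor-product period computation is the genuine technical core of C5 and your sketch leaves it unaddressed. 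The Manin constant enters only in C6, where (as in your modular-symbol sketch, which is essentially the content of \cite{WW}) one must compare the N\'eron differential with the modular parametrization to upgrade rationality to integrality.
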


Note that the statement of the conjecture is free of $L$-functions. Morally, it should be purely a property of Selmer groups. 
However, it has some consequences that do not appear to be tractable with classical Selmer group techniques, as we now illustrate.

\begin{theorem}[see Theorem \ref{thm:makesha}, Example \ref{ex:makesha}]\label{thm:introsha}
Let $\ell$ and $p$ be primes such that the primes above $p$ in $\Q(\zeta_\ell)$ are non-principal and have residue degree 2.
If Conjecture \ref{conj:main} holds then, for every semistable elliptic curve $E/\Q$ with no non-trivial isogenies, 
$|\sha_{E/\Q}[p]|=1$ and $c_v\!=\!1$ for all rational primes~$v$, and for every cyclic extension $F/\Q$ of degree $\ell$ with $E(F)=E(\Q),$
$$
 \text{if } |\sha_{E/F}[p^\infty]|=p^2 \text{ then } |\sha_{E/F}[q^\infty]|\neq 1 \text{ for some } q\neq p.
$$ 
\end{theorem}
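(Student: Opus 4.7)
The plan is to argue by contradiction: suppose $|\sha_{E/F}[p^\infty]|=p^2$ and $|\sha_{E/F}[q^\infty]|=1$ for every prime $q\ne p$, so $|\sha_{E/F}|=p^2$, and extract a contradiction from Conjecture~\ref{conj:main}. Note that the residue-degree-$2$ hypothesis forces $p\equiv -1\pmod{\ell}$, so in particular $\ell$ is odd and $p\ne\ell$.

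Let $\rho_1,\dots,\rho_{\ell-1}$ be the non-trivial characters of $\Gal(F/\Q)\cong\Z/\ell\Z$; they form a single orbit under $\Gal(\Q(\zeta_\ell)/\Q)$. Since $E(F)=E(\Q)$, the representation $E(F)_\C$ is a sum of trivials, so $\langle\rho_i,E(F)_\C\rangle=0$ for each $i$. Then C5 gives $\BSD(E,\rho_i)=\beta^\sigma$ for $\beta:=\BSD(E,\rho_1)$ and the appropriate $\sigma\in\Gal(\Q(\zeta_\ell)/\Q)$, while C6 (applicable as $E$ is semistable with no non-trivial isogenies) gives $\beta\in\Z[\zeta_\ell]$. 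Combining with C1 and C2,
\[
  N_{\Q(\zeta_\ell)/\Q}(\beta)\;=\;\prod_{i=1}^{\ell-1}\BSD(E,\rho_i)\;=\;\frac{\BSD(E/F)}{\BSD(E/\Q)}.
\]

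Next I would evaluate this ratio via~\eqref{bsd}. The hypothesis $E(F)=E(\Q)$ yields $\Reg_{E/F}/\Reg_{E/\Q}=\ell^r$ (for $r=\rk E/\Q$) with torsion unchanged, while $c_v=1$ on $\Q$ gives $C_{E/\Q}=1$. The key local claim is that $C_{E/F}=\ell^m$ for some $m\ge 0$: for $E$ semistable, good reduction primes contribute $1$; primes of non-split multiplicative reduction with $c_v=1$ have $-\ord_v(j)$ odd, a property preserved under any cyclic degree-$\ell$ extension (ramified or not, since $\ell$ is odd), giving $c_w=1$; primes of split multiplicative reduction with $c_v=1$ have $-\ord_v(j)=1$ and yield $c_w=e_{w/v}\in\{1,\ell\}$. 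Under the contradictory assumption $|\sha_{E/F}|=p^2$ this gives
\[
  N_{\Q(\zeta_\ell)/\Q}(\beta)=\frac{\ell^{r+m}\,p^2}{|\sha_{E/\Q}|},
\]
and integrality of $N(\beta)$ together with $p\nmid|\sha_{E/\Q}|$ forces $|\sha_{E/\Q}|=\ell^t$ with $t\le r+m$, so $N(\beta)=\ell^{r+m-t}p^2$.

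Finally, I would derive the contradiction from the ideal $(\beta)\subseteq\Z[\zeta_\ell]$. The unique prime above $\ell$ is $\lambda=(1-\zeta_\ell)$, principal of norm $\ell$; the primes $\p_1,\dots,\p_{(\ell-1)/2}$ above $p$ each have norm $p^2$ by the residue-degree hypothesis and are non-principal by assumption. Writing $(\beta)=\lambda^a\prod_i\p_i^{b_i}$ and matching norms forces $a=r+m-t$ and $\sum_i b_i=1$, so $(\beta)=\lambda^{r+m-t}\p_j$ for a single index $j$. But $(\beta)$ and $\lambda^{r+m-t}$ are principal, whereas $\p_j$ is not --- contradiction. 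The main obstacle is the local Tamagawa analysis in the middle paragraph: one must rule out any prime other than $\ell$ from appearing in $C_{E/F}$ (in particular any factor of $2$ from non-split multiplicative reduction), and it is this that allows the class-group argument in the final paragraph to bite; it relies essentially on the oddness of $\ell$, on semistability, and on $c_v=1$ at every rational prime.
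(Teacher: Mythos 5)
Your proof is correct and follows essentially the same route as the paper's: the same Galois orbit of faithful characters of $C_\ell$, the same reduction via (C1), (C2), (C5), (C6) to expressing $\BSD(E/F)/\BSD(E/\Q)$ as the norm of an element of $\Z[\zeta_\ell]$, the same Tamagawa-number analysis showing $C_{E/F}$ is a power of $\ell$, and the same class-group contradiction using that $(1-\zeta_\ell)$ is principal while the primes above $p$ are not. The only cosmetic differences are that you pin down $|\sha_{E/\Q}|$ via integrality of the norm rather than via the inclusion of Selmer groups, and you factor the ideal $(\beta)$ explicitly where the paper just observes that $p^2$ would be the norm of a principal ideal.
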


Roughly speaking, in the setting of the theorem the presence of the $p$-primary part of $\sha$ forces some other part of $\sha$ to be non-trivial too. It would be interesting to have a purely Selmer theoretic method that can explain such behaviour.

Conjecture \ref{conj:main} can also be used to show that purely local constraints can force certain Selmer groups of $E$ over extensions $F/\Q$ to become non-trivial. More usual methods for achieving such criteria either use Galois module structures 
or Iwasawa theoretic methods (both can be used to make $\Sel_p(E/F)$ non-trivial for $p|[F\!:\!\Q]$, see e.g. \cite{Bartel, Matsuno}) or use some form of the parity conjecture (this  requires $[F\!:\!\Q]$ to be even).

\begin{theorem}[see Corollary \ref{cor:makeselmer}]\label{thm:introsel}
Suppose Conjecture \ref{conj:main} holds.
There is an (explicit) Galois number field $F$ of odd degree and (explicit) rational prime $\ell$, such that every elliptic curve $E/\Q$ with additive reduction at $\ell$ of Kodaira type III and good reduction at other primes that ramify in $F/\Q$ has a non-trivial $p$-Selmer group $\Sel_p(E/F)$ for some prime $p\nmid [F\!:\!\Q]$.
\end{theorem}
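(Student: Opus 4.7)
The plan is to argue by contradiction: suppose $E/\Q$ satisfies the hypotheses of the theorem but $\Sel_p(E/F)=0$ for every rational prime $p\nmid [F\!:\!\Q]$. Then $E(F)$ is finite, $\sha_{E/F}$ is supported only on primes dividing $[F\!:\!\Q]$, and in particular $\langle\rho,E(K)_\C\rangle=0$ for every non-trivial irreducible Artin representation $\rho$ factoring through $\Gal(F/\Q)$. Under Conjecture \ref{conj:main}, the integrality and Galois-equivariance statements of C5 and C6 therefore apply to each such $\BSD(E,\rho)$.

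I would then exploit C1 and C2: decomposing the regular representation $\Ind_{F/\Q}\triv$ into irreducibles gives
\[
 \BSD(E/F)=\BSD(E/\Q)\cdot\prod_{\rho\neq\triv}\BSD(E,\rho)^{\dim\rho},
\]
with $\rho$ running over non-trivial irreducible representations of $\Gal(F/\Q)$. Under the contradiction hypothesis, the left-hand ratio $\BSD(E/F)/\BSD(E/\Q)$ reduces to an explicit product of local Tamagawa factors, period quotients and terms involving only primes dividing $[F\!:\!\Q]$. The Kodaira type III assumption gives $c_\ell(E)=2$, and the product $\prod_{w\mid\ell}c_w(E/F)$ is a power of $2$ determined entirely by the decomposition of $\ell$ in $F$; the good-reduction hypothesis at the remaining ramified primes of $F/\Q$ kills any other potentially disruptive local Tamagawa contributions.

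The explicit choice of $F$ and $\ell$ is then engineered so that this isolated power of $2$, combined with C5 and C6, forces at least one factor $\BSD(E,\rho)$ to fail $p$-integrality for some $p\nmid[F\!:\!\Q]$. Concretely, the mechanism should mirror that of Theorem \ref{thm:introsha}: take $\ell$ totally ramified in $F$ (so that all the ``doubling'' from Kodaira III is concentrated at a single prime of $F$) and choose $p$ so that the primes above $p$ in $\Q(\zeta_d)$ (with $d$ the order of the relevant $\rho$) are non-principal and have a prescribed residue degree, so that the Galois-equivariant algebraic integer in $\Z[\zeta_d]$ predicted by C5 and C6 simply cannot accommodate the required factor of $2$. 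This yields the desired contradiction and hence a non-trivial $\Sel_p(E/F)$.

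The hard part will be the explicit local bookkeeping. First, one must single out a pair $(F,\ell)$ for which the $2$-adic imbalance produced at $\ell$ interacts with the Galois module structure of $\Z[\zeta_d]$ in exactly the right way to manufacture a $p$-adic denominator; this is a number-theoretic existence problem (finding $p$ with the right splitting behaviour in $\Q(\zeta_d)$) and will likely dictate the ``explicit'' $F$ and $\ell$ in the theorem. Second, and more delicate, one must verify that the period ratio $\Omega_{E/F}/\Omega_{E/\Q}^{[F:\Q]}$ and its decomposition across twists does not silently cancel the offending factor: unlike the Tamagawa contributions, periods are not local at $\ell$, so a careful matching of Deligne-period predictions with the C5/C6 constraints is needed to confirm that the obstruction survives. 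I expect this period analysis, rather than the contradiction scheme itself, to be the genuine technical heart of the argument.
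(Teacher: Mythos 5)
Your overall contradiction scheme (assume $\Sel_p(E/F)=0$ for all $p\nmid[F\!:\!\Q]$, reduce the $\BSD$ quotient to purely local terms, and contradict C5) matches the paper's strategy, but the obstruction mechanism you propose is not the one that works, and I do not believe it can be made to work. First, the ``isolated power of $2$'' from the type III Tamagawa numbers is a red herring: in the paper's configuration $\ell$ is totally ramified in each relevant subfield, the reduction stays of type III or III$^*$ there, and the Tamagawa number is $2$ in every field, so these contributions cancel in the relevant alternating product of $C$-terms. The contribution that actually survives at $\ell$ is the $|\omega/\omega^{\min}|$ term coming from the failure of minimality of the global minimal model after a ramified extension of degree $q_1q_2$; it equals $\ell^{n}$ with $n=\lfloor\frac{3q_1q_2}{12}\rfloor-\lfloor\frac{3q_1}{12}\rfloor-\lfloor\frac{3q_2}{12}\rfloor\not\equiv 0\bmod 3$ by an explicit congruence hypothesis on $q_1,q_2$. (Your worry about global period ratios is moot in the paper's normalisation: the periods sit on the analytic side of $(\dagger)$, and the only period-like quantities in $\BSD(E/F)$ are exactly these local $|\omega/\omega^{\min}|$ factors.)

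Second, the ``non-principal primes above $p$'' trick borrowed from Theorem \ref{thm:introsha} has no purchase here. In that theorem a non-trivial $p$-power order of $\sha_{E/F}$ is a \emph{hypothesis}, and one tests whether that $p^2$ can be the norm of a principal ideal; in the present theorem every global invariant is assumed trivial, so there is no $p$-power left whose principality could be obstructed --- the contradiction must come from the local term $\ell^n$ alone. The paper achieves this by taking $\Gal(F/\Q)=C_{q_1q_2}\rtimes C_3$ non-abelian and $\rho$ the $3$-dimensional faithful irreducible, so that $\Q(\zeta_{q_1q_2})/\Q(\rho)$ is everywhere unramified; class field theory then forces the norm of any principal fractional ideal of $\Q(\rho)$ to lie in $q_1^{\Z}q_2^{\Z}\cdot\{m\equiv 1\bmod q_1q_2\}$, and $x^3\ell^{n}$ violates this because $\ell$ has order $3$ in $\F_{q_1}^\times$ and is not a cube mod $q_1$. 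Your proposal, which decomposes the full regular representation and hopes that C5/C6 integrality in some $\Z[\zeta_d]$ blocks a factor of $2$, supplies neither the non-abelian group structure that creates the congruence obstruction on norms nor a reason why the surviving local quantity fails it; note also that C6 only applies to Dirichlet characters, whereas the representation doing the work here is $3$-dimensional. Producing the nontrivial Selmer group therefore requires the specific $(q_1,q_2,\ell)$ congruence engineering of Theorem \ref{thm:qqm} and Corollary \ref{cor:makeselmer}, which your sketch does not reach.
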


Finally, we will also show that Conjecture \ref{conj:main} can be used to establish purely theoretical results, such as the following case of the Birch--Swinnerton-Dyer conjecture for twists of elliptic curves by dihedral Artin representations (below $D_{2pq}$ denotes the dihedral group of order~$2pq$). As far as we are aware, this does not follow from known cases of the parity conjecture.

\begin{theorem}[see Theorem \ref{thm:parityconj}]\label{thm:parityconjintro}
Let $F/\Q$ be a Galois extension with Galois group $D_{2pq}$, with $p,q\equiv 3\! \mod 4$ primes, and let $\rho$ be a faithful irreducible Artin representation that factors through $F/\Q$. If Conjecture \ref{conj:main} holds, then for every semistable elliptic curve $E/\Q$,
$$
 \text{if } \ord_{s=1} L(E,\rho,s) \text{ is odd, then } \langle\rho,E(F)_\C\rangle>0.
$$
\end{theorem}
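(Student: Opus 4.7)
The plan is to deduce the conclusion from the parity-type identity encoded in property C3 of Conjecture \ref{conj:main}, combined with the functional equation of $L(E,\rho,s)$.

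First I would pin down the structure of $\rho$. Since the only one-dimensional characters of $D_{2pq}$ are non-faithful on the cyclic subgroup $C_{pq}$, the faithful irreducible $\rho$ must be two-dimensional, and so takes the form $\rho = \Ind_{C_{pq}}^{D_{2pq}}\chi$ for a faithful character $\chi$ of $C_{pq}$. Because reflections in $D_{2pq}$ act by inversion on $C_{pq}$, one has $\Ind(\chi)\iso\Ind(\chi^{-1})$, hence $\rho\iso\rho^*$, and in particular $\rho$ is self-dual, so $w_\rho\in\{\pm 1\}$.

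The core of the argument is to apply C3 to this self-dual $\rho$. Setting $r=\langle\rho,E(F)_\C\rangle$, C3 reads
\[
  \BSD(E,\rho) \;=\; \BSD(E,\rho)\cdot(-1)^{r}\,w_{E,\rho}\,w_\rho^{-2}.
\]
Since $\BSD(E,\rho)\in\C^\times$ and $w_\rho^{2}=1$, cancellation yields $w_{E,\rho}=(-1)^{r}$. For semistable $E/\Q$ and dihedral $\rho$, the $L$-function $L(E,\rho,s)$ equals $L(E/K,\chi,s)$ for the quadratic subfield $K$ of $F$ and a finite order Hecke character $\chi$, so its analytic continuation and functional equation are available from modularity; this gives $w_{E,\rho}=(-1)^{\ord_{s=1}L(E,\rho,s)}$. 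Combining, $r$ and $\ord_{s=1}L(E,\rho,s)$ have the same parity, so if the $L$-order is odd then $r$ is a non-negative odd integer, whence $r\geq 1>0$, as required.

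The main obstacle is conceptual rather than technical: everything is packed into C3, which is a highly non-trivial conjectural input encoding a sweeping parity prediction for Artin twists. The hypotheses $p,q\equiv 3\pmod 4$ and $E$ semistable do not appear to enter the deduction above; I would expect them to be imposed (a) to ensure the functional equation of $L(E,\rho,s)$ is unconditionally available in this setting, and (b) to delineate the resulting parity statement from existing cases of the parity conjecture for dihedral twists, so that the theorem is genuinely a new consequence of Conjecture \ref{conj:main}.
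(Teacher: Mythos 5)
Your deduction is formally valid as a proof of the literal statement, but it takes a completely different route from the paper and, in doing so, empties the theorem of its content. For self-dual $\rho$, property C3 collapses to $w_{E,\rho}=(-1)^{\langle\rho,E(F)_\C\rangle}$, which is precisely the parity conjecture for the twist; indeed Corollary \ref{cor:main} shows that C3 is extracted from the functional equation \emph{together with} the rank statement of Conjecture \ref{conj:equivbsd}. So your argument amounts to ``assume the rank conjecture and the functional equation; then the analytic and arithmetic parities agree,'' and the fact that the hypotheses $p,q\equiv 3\bmod 4$ and semistability never enter should have been a warning sign rather than a footnote. The paper's proof of Theorem \ref{thm:parityconj} deliberately avoids C3: it invokes only C1, C2, C5 through Theorem \ref{thm:norm}, applied to the decomposition $\bigoplus_\fg\rho^\fg=\Ind^G_{C_2}\triv\ominus\Ind^G_{D_{2p}}\triv\ominus\Ind^G_{D_{2q}}\triv\oplus\triv$. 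This shows that if $\langle\rho,E(F)_\C\rangle=0$ then a certain ratio of $\BSD$-quotients is a norm from $\Q(\sqrt{pq})\subset\Q(\rho)$; an explicit case-by-case computation of Tamagawa numbers and regulators (this is where semistability and known parity results enter) identifies that ratio as $(pq)^{\#X}$ times a rational square with $\#X\equiv\ord_{s=1}L(E,\rho,s)\bmod 2$, and the condition $p\equiv 3\bmod 4$ guarantees that $pq$ is not a norm from $\Q(\sqrt{pq})$, giving the contradiction. The interest of the theorem, as the surrounding remarks stress, is exactly that the Galois-equivariance/norm properties --- which look nothing like local root numbers --- independently force the parity prediction; your route, by contrast, buys nothing beyond what C3 already asserts by fiat.
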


We stress once again that Conjecture \ref{conj:main} ought to be purely a statement about Selmer groups, although we do not understand the extra structure on Selmer groups or on $\sha$ that causes it: our justification of the conjecture relies on $L$-functions. For applications like Theorem \ref{thm:parityconjintro} it is clearly important to find a proof that does not assume the Birch--Swinnerton-Dyer conjecture.

\begin{problem}
Justify Conjecture \ref{conj:main} without assuming the Birch--Swinnerton-Dyer conjecture.
\end{problem}

\begin{remark}\label{rmk:rationaltrace}
The conjecture completely determines the value of $\BSD(E,\rho)$ for Artin representations $\rho$ whose character is  
$\Q$-valued. Indeed, for a finite group $G$, the image of the Burnside ring in the rational representation ring has finite index.
Thus, if $\rho$ factors through $\Gal(K/\Q)$ where $K/\Q$ is a finite Galois extension, there are intermediate fields $F_i, F_j'$ of $K/\Q$ and a positive integer $m$ such that 
$\rho^{\oplus m}\oplus\bigoplus_i \Ind_{F_i/\Q}\triv \simeq \bigoplus_j \Ind_{F_j'/\Q}\triv$,
and so (C1), (C2) and (C4) imply that $\BSD(E,\rho)$ is the unique real number such that
$$
  \BSD(E,\rho)^m = \frac{\prod_j \BSD(E/F_j')}{\prod_i \BSD(E/F_i)} \; \; \; \text{ and } \; \; \; \sign\BSD(E,\rho)=\sign w_\rho.
$$
\end{remark}

\begin{remark}
As illustrated in Theorem \ref{thm:parityconjintro} above, our method sometimes allows us to predict the existence of points of infinite order on elliptic curves (see also \S\ref{ss:infiniteorder}, and Theorem \ref{thmquadrank} for an example with a quaternion Galois group). Needless to say, we have not found a setting where we can predict the existence of rational points which is not already predicted by the parity conjecture, 
or which outright contradicts it. The computations arising in our approach look very different from the theory of local root numbers, but (rather magically) always match.
\end{remark}


\subsection{$L$-values of Artin twists of elliptic curves} 

The heart of our approach to deriving Conjecture \ref{conj:main} lies in extracting precise consequences of $L$-function conjectures in the setting of Artin twists of elliptic curves. For the ``$L$-function side'' of the sought Birch--Swinnerton-Dyer formula for twists we use the following modification of the leading term of $L(E,\rho,s)$ at $s=1$. This is very carefully chosen so as to mesh well with the Birch--Swinnerton-Dyer conjecture over number fields, the functional equation and Deligne's period conjecture for Artin twists of elliptic curves (see \S\ref{periodconj}) at the same time. We will show that it satisfies the analogues of (C1)--(C6) of Conjecture \ref{conj:main}, which is our justification for the conjecture.

\begin{definition}\label{def:lalg}
For an elliptic curve $E/\Q$ and an Artin representation $\rho$ over $\Q$, we write
$$
  \sL(E,\rho) =  \lim_{s\to 1}\frac{L(E,\rho,s)}{(s-1)^r}  \cdot \frac{\sqrt{\f_\rho}}{\Omega_+(E)^{d^+(\rho)}|\Omega_-(E)|^{d^-(\rho)}w_\rho},
$$ 
where $r=\ord_{s=1}L(E,\rho,s)$ is the order of the zero at $s=1$, $\f_\rho$ is the conductor of $\rho$, 
and $d^{\pm}(\rho)$ are the dimensions of the $\pm 1$-eigenspaces of complex conjugation in its action on $\rho$.
\end{definition}

\begin{theorem}[Theorem \ref{thm:lfunctions}, Corollary \ref{miniprojectthm}]\label{thm:introlfunctions}
Let $E/\Q$ be an elliptic curve and let $\rho$ be an Artin representation over $\Q$.  
Fix $\zeta$ satisfying $\zeta^2=w_\rho^2w_{E,\rho}^{-1}$.

Suppose that for all Artin representations $\psi$ over $\Q$, the $L$-functions $L(E,\psi,s)$ have analytic continuation to $\C$ and satisfy the functional equation, Deligne's period conjecture and the Riemann hypothesis. Suppose also that Stevens's Manin constant conjecture holds for $E/\Q$ and the Birch--Swinnerton-Dyer conjecture holds for $E$ over number fields. Then
\begin{enumerate}[leftmargin=2.5em]
\item[(1)] $\sL(E,\Ind_{F/\Q}\triv)=\BSD(E/F)$ for a number field $F$.
\item[(2)] $\sL(E,\rho\oplus\rho')=\sL(E,\rho)\sL(E,\rho').$
\item[(3)] 
$\sL(E,\rho^*)=(-1)^r \zeta^{2} \sL(E,\rho)$,
where $r=\ord_{s=1}L(E,\rho,s).$
\item[(4)] If $\rho\simeq\rho^*$ then $\sL(E,\rho) \in \R$ and  $w_\rho\cdot\sL(E,\rho)>0$. 
\end{enumerate}

\noindent Henceforth suppose that moreover $L(E,\rho,1)\neq 0$. Then
\begin{enumerate}[leftmargin=2.5em]
\item[(5)] $\sL(E,\rho) \in \Q(\rho)$.
\item[(6)] $\sL(E,\rho)\cdot\cO_{\Q(\rho)}$ is invariant under complex conjugation as a fractional ideal of $\Q(\rho)$.
\item[(7)] $\sL(E,\rho^\fg)=\sL(E,\rho)^\fg$ for all $\fg \in \Gal(\Q(\rho)/\Q).$
\item[(8)] $\zeta$ is a root of unity. If $\f_E$ is coprime to $\f_\rho$, then $\zeta^2=(-1)^{d^{-}(\rho)}w_E^{\dim\rho}\det\rho(\f_E)$, where $\det\rho$ is regarded as a primitive Dirichlet character (see Notation \ref{not:conventions}).
\item[(9)] $\zeta\cdot \sL(E,\rho) \in \Q(\rho,\zeta)^+$; in particular $\arg\sL(E,\rho)=\arg\pm\zeta^{-1}$.
\item[(10)]\label{integrality} If $\rho$ is a non-trivial primitive Dirichlet character of order $d$, and either $\f_\rho$ is coprime to $\f_E$ or $E$ is semistable and has no non-trivial isogenies over $\Q$, then $\sL(E,\rho)\in\Z[\zeta_d].$
\end{enumerate}

\noindent Let $B=\sqrt[m]{\frac{\prod_j\BSD(E/F_j')}{\prod_i\BSD(E/F_i)}}$ for any number fields $F_i, F_j'$ and positive integer $m$ that satisfy\footnote{These exist by Remark~\ref{rmk:rationaltrace}.} 
$(\bigoplus_{\fg\in\fG}\rho^\fg)^m \oplus \bigoplus_{i}\Ind_{F_i/\Q}\triv = \bigoplus_j \Ind_{F_j'/\Q}\triv$, where $\fG=\Gal(\Q(\rho)/\Q)$. Then

\vspace{0.3em}

\begin{enumerate}[leftmargin=2.5em]
\item[(11)] $N_{\Q(\rho)/\Q}(\sL(E,\rho))=\pm B$, with sign $+$ if $m$ is odd.
\item[(12)] $N_{\Q(\rho)^+/\Q}(\zeta\cdot\sL(E,\rho)) = \pm\sqrt{B}$ if $\rho\not\simeq\rho^*$ and $\zeta \in \Q(\rho)$.
\item[(13)] $N_{\Q(\rho,\zeta)^+/\Q}(\zeta\cdot\sL(E,\rho))=\pm B$ if $\rho\not\simeq\rho^*$ and $\zeta \notin \Q(\rho)$.
\end{enumerate}
\end{theorem}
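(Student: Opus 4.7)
The plan is to prove the thirteen items in three stages: $(1)$--$(4)$ by direct unwinding of the BSD formula and the functional equation; $(5)$--$(10)$ via Deligne's period conjecture combined with Artin root-number arithmetic and modular symbols; and $(11)$--$(13)$ via the Burnside-ring reduction of Remark~\ref{rmk:rationaltrace}. For $(1)$, I would identify each factor in $\sL(E,\Ind_{F/\Q}\triv)$ with its counterpart in \eqref{bsd}: $L(E,\Ind_{F/\Q}\triv,s)=L(E/F,s)$; the conductor--discriminant formula gives $\f_{\Ind_{F/\Q}\triv}=|\Delta_F|$; complex conjugation on $\Ind_{F/\Q}\triv$ has $\pm 1$-eigenspaces of dimensions $r_1+r_2$ and $r_2$; and $w_{\Ind_{F/\Q}\triv}=1$ as this is the root number of $\zeta_F$. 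Part $(2)$ is immediate from additivity of $L$-functions and $d^\pm$ and multiplicativity of conductors and root numbers. For $(3)$, I would expand the functional equation $\Lambda(E,\rho,s)=w_{E,\rho}\Lambda(E,\rho^*,2-s)$ near $s=1$: using $\f_{\rho^*}=\f_\rho$, $d^\pm(\rho^*)=d^\pm(\rho)$, and the relation $w_\rho/w_{\rho^*}=w_\rho^2$ (since $|w_\rho|=1$), one obtains $\sL(E,\rho^*)/\sL(E,\rho)=(-1)^r w_\rho^2/w_{E,\rho}=(-1)^r\zeta^2$. Part $(4)$ follows because a self-dual $\rho$ has real-valued character, forcing real Dirichlet coefficients for $L(E,\rho,s)$ and $w_\rho\in\{\pm 1\}$, hence $\sL(E,\rho)\in\R$; the sign assertion $w_\rho\sL(E,\rho)>0$ will be deduced in the proof of $(11)$, where the Burnside reduction writes $\sL(E,\rho)^m$ as a positive ratio of $\BSD(E/F)$-values.

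The second block $(5)$--$(10)$ rests on Deligne's period conjecture applied to the motive $h^1(E)\otimes\rho$ at the critical point $s=1$: under the conventions of \S\ref{periodconj}, Deligne's period $c^+(M_{E,\rho})$ equals $\Omega_+(E)^{d^+(\rho)}|\Omega_-(E)|^{d^-(\rho)}w_\rho/\sqrt{\f_\rho}$ up to an element of $\Q(\rho)^\times$, which is exactly the normalising factor of $\sL$; this yields $(5)$, and the Galois-equivariance of Deligne's conjecture yields $(7)$. Part $(6)$ combines $(7)$ with the functional equation to see that $\sL(E,\rho)\,\cO_{\Q(\rho)}$ is fixed by complex conjugation. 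For $(8)$, $w_\rho$ is a root of unity by classical Artin theory, and each local factor of $w_{E,\rho}=\prod_v w_v(E,\rho)$ is a root of unity under the hypotheses; in the coprime-conductor case the local computation splits cleanly and gives $\zeta^2=(-1)^{d^-(\rho)}w_E^{\dim\rho}\det\rho(\f_E)$. Part $(9)$ is a direct combination of $(6)$ and $(8)$. Part $(10)$ invokes the classical modular-symbol identity expressing $L(E,\chi,1)\tau(\bar\chi)/\Omega_\epsilon(E)$ as $\sum_{a\bmod f}\bar\chi(a)[a/f]_E^\epsilon$, whose right-hand side is an algebraic integer in $\Z[\zeta_d]$ under Stevens's Manin constant conjecture; this gives $\sL(E,\chi)\in\Z[\zeta_d]$ in the coprime case, while the semistable isogeny-free case requires an additional local argument at primes of common bad reduction.

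Finally, for $(11)$--$(13)$, applying $(1)$ and $(2)$ to the Burnside-ring identity $(\bigoplus_{\fg\in\fG}\rho^\fg)^m\oplus\bigoplus_i\Ind_{F_i/\Q}\triv=\bigoplus_j\Ind_{F_j'/\Q}\triv$ gives $\prod_{\fg\in\fG}\sL(E,\rho^\fg)^m=B^m$, and $(7)$ rewrites the left-hand side as $N_{\Q(\rho)/\Q}(\sL(E,\rho))^m$; taking $m$-th roots (with sign controlled by $(4)$ when $m$ is odd) yields $(11)$. Parts $(12)$ and $(13)$ refine this by descending the norm to the maximal real subfield of $\Q(\rho)$ or $\Q(\rho,\zeta)$, using $(9)$ to identify $\zeta\sL(E,\rho)$ as an element of that subfield. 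I expect the main obstacle to be the period matching underlying $(5)$--$(8)$: precisely pinning down the $w_\rho/\sqrt{\f_\rho}$ twist in terms of Deligne's period---which is exactly what makes $(3)$, $(4)$ and $(6)$--$(9)$ mutually consistent---requires a careful unwinding of Deligne's definitions; the non-coprime case of $(10)$ is a secondary obstacle that will hinge on Manin-constant properties of optimal semistable quotients.
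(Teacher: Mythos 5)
Most of your outline tracks the paper's argument, but two steps are genuinely broken.

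First, the sign assertion in (4). You propose to deduce $w_\rho\cdot\sL(E,\rho)>0$ from the Burnside-ring identity in (11), ``where the reduction writes $\sL(E,\rho)^m$ as a positive ratio of $\BSD(E/F)$-values.'' This cannot work: the identity only controls $N_{\Q(\rho)/\Q}(\sL(E,\rho))^m$, i.e.\ the product over all Galois conjugates, and the positivity of a product of real numbers says nothing about the sign of an individual factor. Moreover the root number $w_\rho$ never enters the Burnside computation, so there is no mechanism by which the sign could come out equal to $\sign w_\rho$; and (11) is only available when $L(E,\rho,1)\neq 0$, whereas (4) must also hold for the leading Taylor coefficient when $r>0$. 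This is exactly why the Riemann hypothesis appears among the hypotheses: the paper's proof shows $L(E,\rho,s)$ is real on $(1,\infty)$, positive for $s\gg 0$ by the Euler product, and nonvanishing on $(1,\infty)$ by RH, so by continuity and the intermediate value theorem the leading coefficient at $s=1$ is positive; combined with $w_\rho=\pm 1$ this gives the sign.

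Second, the claim in (8) that $\zeta$ is a root of unity. Your argument --- that ``$w_\rho$ is a root of unity by classical Artin theory'' and that each local factor of $w_{E,\rho}$ is one --- is false: for a Dirichlet character $\chi$ of order $\ge 3$ the root number is a normalised Gauss sum $\tau(\chi)/(i^{d^-}\sqrt{\f_\chi})$, which is generically \emph{not} a root of unity (e.g.\ the angles of normalised cubic Gauss sums equidistribute). The correct route, and the one the paper takes, is global and Galois-theoretic: by L3 and L5 the ratio $U(E,\rho)=\sL(E,\rho^*)/\sL(E,\rho)=w_\rho^2w_{E,\rho}^{-1}$ lies in $\Q(\rho)$ and satisfies $U(E,\rho)^\fg=\overline{\sL(E,\rho^\fg)}/\sL(E,\rho^\fg)$, so every Archimedean absolute value of $U(E,\rho)$ equals $1$; Kronecker's theorem then forces it to be a root of unity. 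Relatedly, your derivation of (9) from ``(6) and (8)'' is too weak --- total reality of $\zeta\cdot\sL(E,\rho)$ requires applying (2), (4) and (7) to $\rho^\fg\oplus(\rho^\fg)^*$ for every $\fg$ to see that all conjugates of $U(E,\rho)\sL(E,\rho)^2$ are positive reals. The remaining items ((1)--(3), (5)--(7), (10)--(13)) follow the paper's route and are fine, with (10) correctly delegated to the integrality results of Wiersema--Wuthrich.
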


\begin{remark}
The main reason why most of the results above require the assumption that the $L$-value is non-zero is that it features in Deligne's period conjecture. 
It might be possible to extend the predictions to the higher rank case using the motivic $L$-value conjectures (Beilinson, Bloch--Kato, Equivariant Tamagawa Number Conjecture). 
These may also let one generalise the integrality statement (10) to other Artin representations and to pin down the ideal generated by $\sL(E,\rho)$ more precisely. We will not attempt to address this here.
\end{remark}



\subsection{Layout}

This paper is split into three parts. 

In \S \ref{s:robert} we extract the explicit $L$-value predictions of Theorem \ref{thm:introlfunctions} from the classical conjectures and deduce Theorem \ref{lotsofconj} from them. The key technical step here is to express the periods associated to an Artin twist of an elliptic curve to the classical periods $\Omega_{\pm}$ (Corollary~\ref{twistedperiod}).

In \S \ref{s:vladimir} we develop the arithmetic consequences for elliptic curves, including Theorems \ref{thm:introsha}, \ref{thm:introsel} and \ref{thm:parityconjintro}. The main ingredient is Theorem \ref{thm:norm}, which,  based on Conjecture \ref{conj:main}, lets us link easily controllable local invariants to ranks and the Tate--Shafarevich group. In view of Theorem~\ref{lotsofconj} these results are all consequences of the classical conjectures on $L$-functions.

In \S \ref{s:hanneke} we discuss explicit examples of $L$-values of twists of elliptic curves by Dirichlet characters and illustrate the difficulty of refining Theorem \ref{thm:introlfunctions} to a clean BSD-type prediction for the value of $\sL(E,\rho)$. We end by giving several tables of examples of a similar kind to Example \ref{ex:introha}.

We have kept the three sections largely independent of one another. In particular, the reader who does not wish to grapple with the motivic background can skip directly to the arithmetic applications in \S \ref{s:vladimir} or the $L$-value examples in \S \ref{s:hanneke}.


\subsection{Notation}\label{ss:notation}

We fix (once and for all) an algebraic closure $\Qb$ inside $\C$. All our number fields will be subfields of this choice of $\Qb$.

Formally, all our Artin representations will be $\C$-valued; that is, defined by a group homomorphism $\rho:G_K \to \Aut_\C(V)$ 
that factors through $\Gal(F/K)$ for some finite Galois extension $F/K$ and some finite dimensional complex vector space $V$. We will typically work with isomorphism classes of Artin representations, without explicitly mentioning it.
 
\medskip

\noindent The following notation is used throughout the paper:

\smallskip

\begin{tabular}{ll}
$E$ & an elliptic curve defined over $\Q$.\\
$c_v(E/F)$ & the local Tamagawa number of $E/F_v$.\\
$G_F$ & the absolute Galois group $\Gal(\Qb/F)$ of a number field $F \subseteq \Qb.$ \\
$\Frob_\p$ & (arithmetic) Frobenius element at a prime $\p$.\\
$\rho^*$ & the dual representation of an Artin representation $\rho.$ \\
$\Q(\rho)$ & the (abelian) extension of $\Q$  generated by the character values of $\rho.$ \\
$\rho^\fg$ &  for $\fg\in\Gal(\Q(\rho)/\Q)$, the Artin representation with character $\Tr\rho^{\fg}=\fg\circ \Tr\rho$.\\
$d^\pm(\rho)$ & the dimension of the $\pm1$-eigenspace of complex conjugation on $\rho.$\\
$\f_\rho$ & the Artin conductor of $\rho.$ \\
$\f_E$ & the conductor of $E/\Q$. \\
$w_\rho$ & the Artin root number of $\rho.$\\
$w_E$ & the root number of $E/\Q$ (the sign in the functional equation). \\
$w_{E,\rho}$ & the root number of the twist $E/\Q$ by $\rho$  (see \S\ref{twistsection}). \\
$\Ind_{F/K}\rho$ & $\Ind_{G_F}^{G_K}\rho$ for a field extension $F/K$ and an Artin representation $\rho$ over $F.$\\
$\ominus$ & the formal difference of Artin representaions, i.e. $\rho_1\ominus \rho_2= \rho_3 \Leftrightarrow \rho_1= \rho_2\oplus \rho_3$.\\
$\zeta_n$ & a primitive $n$-th root of unity.\\
$N_{F/K}$ & the norm map from $F$ to $K$.
\end{tabular}

\begin{notation}\label{not:conventions}
We use the convention (as in \cite{Dokchitser2005} \S3.2) that the Euler factor at a prime $p$ of $L(E,\rho,s)$ is
$$
 \det \left({\rm{Id}}-\Frob_p^{-1}p^{-s}\>\Big|\> (H^1_\ell(E) \otimes \rho)^{I_p}\right),
$$
where $I_p$ is the inertia group at $p$, $H^1_\ell(E)=H^1_{et}(E,\Q_\ell)\otimes_{\Q_\ell}\C$ for any embedding $\Q_\ell\hookrightarrow \C$ and any prime $\ell\neq p$.

To identify 1-dimensional Artin representations with Dirichlet characters, we use the isomorphism $\Gal(\Q(\zeta_n)/\Q)\to (\Z/n\Z)^\times$ given by $\sigma_a\leftrightarrow a$ for $\sigma_a:\zeta_n\to\zeta_n^a$.

We caution the reader that with these normalisations, if $L(E,s)=\sum a_n n^{-s}$ and $\chi$ is a primitive Dirichlet character of conductor coprime to that of $E$, then 
$$
 L(E,\chi,s) = \sum_{n=1}^{\infty} \overline{\chi(n)} a_n  n^{-s}.
$$
\end{notation}

\begin{notation}
For an elliptic curve $E/\Q$, we define the $\pm$-periods of $E$ to be
$$
 \Omega_+(E)=\int_{E(\C)^+}\omega \; \; \;  \text{ and } \; \; \; \Omega_-(E)=\int_{E(\C)^-}\omega,
$$
where $\omega$ is a global minimal differential on $E$ and $E(\C)^\pm$ is the set of points $P \in E(\C)$ such that $\bar{P}=\pm P$ with orientation chosen so that $\Omega_+(E) \in \R_{>0}$ and $\Omega_-(E) \in i\R_{>0}.$
\end{notation}

\begin{notation}\label{def:omega}
For an elliptic curve $E/\Q$ and a number field $F,$ we define
$$
  C_{E/F}=\prod_{v} c_v(E/F)\left|\frac{\omega}{\omega_v^\text{min}}\right|_v,
$$
where $v$ runs over the finite places of $F,$ $\omega$ is a global minimal differential for $E/\Q$ and $\omega_v^{\text{min}}$ is a minimal differential at $v$. By $\frac {\omega}{\omega_v^{\text{min}}}$ we mean any scalar $\lambda\in F^\times$ that satisfies $\omega=\lambda \omega_v^{\text{min}}$.
In terms of minimal discriminants, if $E$ is given by a Weierstrass equation $y^2+a_1xy+a_3=x^3+a_2x^2+a_4x+a_6$ with discriminant $\Delta_E$ and $\omega=\frac{dx}{2y+a_1x+a_3}$, then
$$
\Bigl|\frac{\omega}{\omega_v^\text{min}}\Bigr|_v^{-12}=\Bigl|\frac{\Delta_E}{\Delta_{E,v}^\text{min}}\Bigr|_v.
$$
\end{notation}

\begin{notation}
For an elliptic curve $E/\Q$ and a number field $F,$ we define
\[\BSD(E/F) = \frac{\Reg_{E/F}|\sha_{E/F}| C_{E/F}}{|E(F)_{\tors}|^2}.\]
\end{notation}

We also briefly recall Stevens's version of the Manin constant conjecture (\cite{Stevens} Conj. I):

\begin{conjecture}[Stevens's Manin constant conjecture]
Every elliptic curve over $\Q$ of conductor $N$ admits a modular parametrisation $X_1(N)\to E$ with Manin constant~1.
\end{conjecture}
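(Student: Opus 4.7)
The plan is to attack this via the interplay between the $X_0(N)$ and $X_1(N)$ moduli interpretations, reducing to partial integrality results on the classical Manin constant. First, for the strong Weil parametrisation $\pi_0\colon X_0(N)\to E_0$ one has $\pi_0^*\omega_0 = c_0\cdot 2\pi i f(z)\,dz$, where $f$ is the associated newform, $\omega_0$ is a N\'eron differential on $E_0$, and $c_0\in\Z$ is the classical Manin constant, shown to be an integer by Mazur. The conjecture of Manin that $c_0=\pm 1$ is known in many cases---for instance, Abbes--Ullmo and others have restricted the odd primes that can divide $c_0$ at places of good or semistable reduction---and I would first see how much of this known integrality can be transferred to the $X_1(N)$-setting.

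The second step is to compare the $X_1(N)$- and $X_0(N)$-optimal curves within a given isogeny class. These are related by an isogeny $\phi\colon E_0\to E$, and the Manin constant of $\pi_1\colon X_1(N)\to E$ can be expressed in terms of $c_0$, the degree of $\phi$, and the local comparison of N\'eron differentials on $E_0$ and $E$. Stevens's specific choice of the $X_1(N)$-optimal curve $E$ within its isogeny class is expected to cancel precisely these factors, which is why his conjecture predicts the Manin constant is exactly $1$, rather than merely $\pm 1$. A careful bookkeeping of how the isogeny $\phi$ interacts with component groups at each bad prime should, in principle, reduce the statement to a collection of local integrality claims.

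The key obstacle, and the reason the conjecture remains open, is uniform control of the $2$-primary part at primes of additive reduction: the existing techniques (Eisenstein methods at multiplicative primes, rigid analytic uniformisation, Atkin--Lehner involutions) do not handle this case uniformly. A serious attempt would likely require a new conceptual input, for example an integral $p$-adic Hodge-theoretic interpretation of the Manin constant on the modular tower, or a cohomological argument tracking Hecke-equivariant integral structures on $H^1$ at bad primes. Given the long-standing nature of the conjecture, I would anticipate realistic progress only on restricted families (such as semistable $E$ with squarefree conductor, where the classical $X_0(N)$-case is already in much better shape) well before a general resolution.
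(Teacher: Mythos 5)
The statement you were asked about is not a theorem of this paper: it is Stevens's conjecture (Conjecture I of Stevens, \emph{Invent.\ Math.} 98 (1989)), which the authors merely \emph{recall} and then use as a standing hypothesis in Theorem 6, Theorem \ref{thm:lfunctions} (L6(ii)) and Corollary \ref{cor:main} (C6). The paper contains no proof of it, and none is known; it remains open. So there is no argument in the paper to compare yours against.

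Your text, correctly, is not a proof either --- it is a survey of the strategy one would try (pass from the $X_0(N)$-optimal curve and the classical Manin constant $c_0$, known to be an integer by results of Edixhoven/Mazur and constrained by Abbes--Ullmo and others, to the $X_1(N)$-optimal curve via the connecting isogeny and a comparison of N\'eron differentials), together with an accurate diagnosis of where this breaks down (the $2$-primary part, and more generally primes of additive reduction, where current techniques give no uniform control). As an assessment of the state of the art this is sound, and your observation that Stevens's choice of the $X_1(N)$-optimal curve is designed to make the constant exactly $1$ rather than $\pm 1$ matches the motivation in Stevens's paper. But since you do not close the gap you yourself identify, the proposal cannot be accepted as a proof; it should be read as an explanation of why the statement is labelled a conjecture. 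If you were aiming to justify the paper's \emph{use} of the statement, the right move is simply to note that it is assumed as a hypothesis wherever it is invoked, exactly as the authors do.
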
 

%

\noindent {\bf {Acknowledgements.}}
The authors would like to thank 
Chris Wuthrich for pointing out an issue in our original claim about integrality of $L$-values and for fixing it in \cite{WW}, and
David Burns for his valuable comments on a draft of the present article.
The first named author was supported by a Royal Society University Research Fellowship.
The third named author was supported by the Engineering and Physical Sciences Research Council [EP/L015234/1], through the EPSRC Centre for Doctoral Training in Geometry and Number Theory (the London School of Geometry and Number Theory) at University College London.


\section{Artin twists of elliptic curves}\label{s:robert}

\noindent In order to explain the implications of Deligne's period conjecture for Artin twists of elliptic curves, we first recall the relevant definitions from the theory of motives. We shall follow closely the presentations given in \cite[\S 4]{Coates1991} and \cite[\S 2]{Venjakob2007} and refer the reader to Deligne's article \cite{Deligne1979} for a more detailed account.

\begin{notation}
The following additional notation applies only in this section:

\vspace{0.75em}

\begin{tabular}{ll}
$\iota$ & the element of $G_\Q$ corresponding to complex conjugation. \\
$\fF$ & a number field (inside our fixed algebraic closure $\Qb,$ as always).\\
$\fF_\C$ & the ring $\fF \otimes \C$ (unadorned tensor products are over $\Q$). \\
$\fF_\ell$ & the ring $\fF\otimes \Q_\ell\simeq \prod_{\lambda|\ell}\fF_\lambda$ where $\fF_\lambda$ is the completion of $\fF$ at the prime $\lambda.$\\
$\Sigma_\fF$ & the set of real and complex embeddings $\fF \to \C.$\\
$\eps(\rho)$ & the epsilon factor of an Artin representation $\rho$ at $s=0$, i.e. $\eps(\rho)=w_\rho\sqrt{\f_\rho}.$ 
\end{tabular}
\end{notation}

\subsection{Motives}
It will be sufficient for our purposes to view motives in the naive sense; that is, as a collection of vector spaces with certain additional structures and comparison isomorphisms between them. In particular, a \textbf{(homogeneous) motive} $M$ over $\Q$ with coefficients in a number field $\fF,$ dimension $d$ and weight $w$ carries the following data:

\vspace{0.5em}

\begin{enumerate}[leftmargin=*]

\item a) A $d$-dimensional $\fF$-vector space $H_B(M)$ (the \textbf{Betti realisation}).\\
b) An $\fF$-linear involution $F_\infty$ on $H_B(M).$\\ 
c) A Hodge decomposition into free $\fF_\C$-modules:
$$
  H_B(M)\otimes \C \; = \bigoplus_{r+s=w}H^{r,s}(M)
$$
$\hphantom{c)}$ such that $F_\infty H^{r,s}(M)=H^{s,r}(M).$ \\

\item a) A $d$-dimensional $\fF$-vector space $H_{dR}(M)$ (the \textbf{de Rham realisation}).\\
b) A decreasing filtration $\{F^kH_{dR}(M) \, : \, k \in \Z\}$ of $\fF$-subspaces of $H_{dR}(M).$ \\

\item a) For each prime $\ell,$ a free $\fF_\ell$-module $H_\ell(M)$ of rank $d$ (the \textbf{$\ell$-adic realisation}).\\
b) For each prime $\ell,$ a continuous action of $G_\Q$ on $H_\ell(M).$\\

\item A comparison isomorphism between $\fF_\C$-modules
$$
  I_{M,\infty}:H_B(M)\otimes \C \isomto H_{dR}(M) \otimes \C
$$
such that $I_{M,\infty}\circ(F_\infty \otimes \iota) = (\id\otimes\, \iota)\circ I_{M,\infty}$ and
$$
  I_{M,\infty}\Big(\bigoplus_{r\geq k} H^{r,s}(M)\Big) = F^kH_{dR}(M)\otimes \C.
$$
\end{enumerate}

\begin{remark}
Comparison isomorphisms between other realisations are also part of the data carried by $M$; however, as we shall not need these for the work that follows, we choose to omit them here and refer the interested reader to sections 2.5 and 2.6 of \cite{Venjakob2007}.
\end{remark}

\subsection{Motivic $L$-functions}
Let $M$ be a motive over $\Q$ with coefficients in $\fF.$ For any prime number $\ell,$ identifying $\fF_\ell$ with $\prod_{\lambda|\ell}\fF_\lambda$ gives rise to a decomposition
$$
  H_\ell(M)=\bigoplus_{\lambda|\ell} H_\lambda(M),
$$
where $H_\lambda(M)$ is the image of $H_\ell(M)$ under scalar multiplication by unity in $\fF_\lambda.$

For each prime number $p,$ let $D_p\subseteq G_\Q$ be a choice of decomposition group at $p$ and let $I_p\subseteq D_p$ be the corresponding inertia subgroup. The \textbf{local polynomial of $M$ at $p$} is
$$
  P_p(M,t)=\det\big(\!\id-\,t\Frob_p^{-1}\mid H_\lambda(M)^{I_p}\big),
$$
where $\lambda$ is a prime of $\fF$ not lying over $p.$ We assume the standard hypothesis that $P_p(M,t)$ is independent of the choice of $\lambda$ and has coefficients in $\fF.$ For each $\sigma \in \Sigma_\fF,$ we define
$$
  L(\sigma,M,s)=\prod_p \sigma P_p(M,p^{-s})^{-1}\in \C,
$$
where $\sigma P_p(M,X)\in \sigma\fF[X]\subset \C[X]$; the expression converges for $s$ with sufficiently large real part. It is conjectured that each $L(\sigma,M,s)$ admits a meromorphic continuation to the entire complex plane which satisfies a functional equation of the form
$$
  L_\infty(M,s)L(\sigma,M,s)=\eps(\sigma,M,s)L_\infty(M^*,1-s)L(\sigma,M^*,1-s),
$$
where the Euler factor at infinity $L_\infty(M,s)$ is a product of gamma functions which does not depend on $\sigma$ (see \cite[Proposition 2.5]{Deligne1979}) and the epsilon factor $\eps(\sigma,M,s)$ is a product of a constant and an exponential (see \cite{TatN} for the details and extra hypotheses required for this construction). 

It is convenient, by identifying $\fF_\C$ with $\C^{\Sigma_\fF}$ via the canonical isomorphism of $\C$-algebras:
\[x\otimes z \longmapsto (z\sigma(x) \, : \, \sigma \in \Sigma_\fF),\]
to form a single $L$-function associated with $M$ which takes values in $\fF_\C\!:$
\[L(M,s)=(L(\sigma,M,s) \, : \, \sigma \in \Sigma_\fF).\]

\subsection{Periods}
Let $M$ be a motive over $\Q$ with coefficients in $\fF.$ For simplicity, we shall restrict to the case where $M$ has odd weight $w.$
Let $H_B(M)^\pm$ denote the $\pm1$-eigenspaces of the endomorphism $F_\infty$ and let
\[H_{dR}(M)^\pm = \frac{H_{dR}(M)}{F^{1+\lfloor w/2\rfloor}H_{dR}(M)}\]
for both choices of sign. The $\pm$\textbf{-period map} $\alpha_M^\pm$ of $M$ is the composition of the following $\fF_\C$-linear maps:
\[H_B(M)^\pm \otimes \C \too H_B(M) \otimes \C \isomto H_{dR}(M)\otimes \C \too H_{dR}(M)^\pm\otimes \C,\]
where the first map is induced by inclusion, the second map is the Betti-de Rham comparison isomorphism, and the last map is induced by the natural quotient map. It follows from \linebreak \cite[\S1.7]{Deligne1979} that $\alpha_M^\pm$ is an isomorphism. The $\pm$-\textbf{period} of $M,$ denoted by $c^\pm(M),$ is defined to be the residue class 
\[\det(\alpha_M^\pm) \text{ mod } \fF^\times\]
in $\fF_\C^\times/\fF^\times,$ where the determinant of the $\pm$-period map is calculated with respect to $\fF$-bases. As above, by identifying $\fF_\C$ with $\C^{\Sigma_\fF},$ we can also view $c^\pm(M)$ as a `tuple': 
\[c^\pm(M)=(c^\pm(\sigma,M) \in \C^\times\!/\fF^\times \, : \, \sigma \in \Sigma_\fF).\]

\subsection{Deligne's period conjecture}\label{periodconj}
Let $M$ be a motive over $\Q$ with coefficients in $\fF.$ \linebreak We retain the assumption that $M$ has odd weight. 
We say that $M$ is \textbf{critical (at $s=0$)} if, whenever $j<k$ and $H^{j,k}(M)\neq 0,$ one has $j<0$ and $k\geq 0.$
See \cite[Lemma 3]{Coates1991} for a proof that this is equivalent to the definition of criticality given in \cite{Deligne1979}.

Suppose that $M$ is critical and fix a choice of representative for the period $c^+(M)$ in $\fF_\C^\times$. Then conjectures 2.7 and 2.8 of \cite{Deligne1979} assert that
\begin{enumerate}
\item $\ord_{s=0}L(\sigma,M,s)$ is independent of $\sigma \in \Sigma_\fF$ and is non-negative.
\item If $L(M,0) \neq 0,$ there exists $x \in \fF^\times$ such that, for all $\sigma \in \Sigma_\fF,$ one has
\[L(\sigma,M,0)=\sigma(x)c^+(\sigma,M).\]
\end{enumerate}

\subsection{The motive associated to a twist}\label{twistsection}

Let $E$ be an elliptic curve over $\Q$ and let $\rho$ be an Artin representation over $\Q.$ Choose any finite abelian extension $\fF/\Q$ over which $\rho$ can be realised and let $\tau$ be an $\fF$-linear representation of $G_\Q$ such that $\C\otimes_\fF\tau \simeq \rho.$

In order to understand Deligne's period conjecture in the setting of Artin twists of elliptic curves, we are led to consider the tensor product motive $h^1(E)(1)\otimes[\tau],$ whose associated realisations and comparison isomorphisms arise by taking the tensor product of the corresponding data for the motives $h^1(E)(1)$ and $[\tau]$ (see Examples 2.1B and 2.1C in \cite{Venjakob2007} for detailed information about the latter two motives). In particular, one has that
$$
  L(h^1(E)(1)\otimes[\tau],s)=(L(E,\rho^\gamma,s+1) \, : \, \gamma \in \Gal(\fF/\Q)),
$$
where $L(E,\rho,s)$ is the Artin-twisted Hasse-Weil $L$-function whose construction is described explicitly in \cite[\S 3.2]{Dokchitser2005}. We recall (\textit{loc. cit.}) that $L(E,\rho,s)$ is conjectured to admit an analytic continuation to the whole complex plane which satisfies a functional equation of the form
$$
  \Gamma_\C(s)^{\dim\rho}L(E,\rho,s)=\eps(E,\rho,s)\Gamma_\C(2-s)^{\dim\rho}L(E,\rho^*,2-s),
$$
where $\Gamma_\C(s)=2(2\pi)^{-s}\Gamma(s)$ and the epsilon factor $\eps(E,\rho,s)$ has the form $w_{E,\rho}\cdot N_{E,\rho}^{1-s}$ where $w_{E,\rho}\in \C$ has absolute value 1 (the root number of the twist) and $N_{E,\rho}$ is a positive integer (the conductor of the twist). 
Finally, we recall that one has the following ``Artin formalism'':
\begin{enumerate}
\item $L(E,\rho_1\oplus \rho_2,s)=L(E,\rho_1,s)L(E,\rho_2,s),$ 
\item $L(E,\Ind_{F/\Q} \triv,s)=L(E/F,s),$ 
\end{enumerate}
where $L(E/F,s)$ is the usual (`un-twisted') Hasse-Weil $L$-function of $E/F.$

The following theorem will allow us to find an explicit representative for the period $c^+(h^1(E)(1)\otimes[\tau])$ in terms of the periods associated with the motives $h^1(E)(1)$ and $[\tau].$

\begin{theorem}\label{motivecalc} 
Let $M$ be a motive over $\Q$ with rational coefficients such that
\begin{enumerate}
  \item $M$ has dimension $2$ and weight $-1,$
  \item $\dim_\Q H_B(M)^\pm =1,$
  \item $H_B(M)\otimes_\Q \C = H^{0,-1}(M)\oplus H^{-1,0}(M).$
\end{enumerate}
Let $N$ be a motive over $\Q$ with coefficients in a number field $\fF$ such that
\begin{enumerate}
  \item $N$ has dimension $d$ and weight $0,$
  \item $H_B(N)\otimes_\Q \C = H^{0,0}(N).$
\end{enumerate}
Under these conditions, the motive $M\otimes N$ is critical and 
$$
  c^+(M\otimes N) = c^+(M)^{\mu} c^-(M)^{\nu} \det(I_{N,\infty}) \; \; \mathrm{ mod } \; \fF^\times,
$$
where $\mu=\dim_\fF(H_B(N)^+),$ $\nu = \dim_\fF(H_B(N)^-)$ and $\det(I_{N,\infty})$ is computed using $\fF$-bases.
\end{theorem}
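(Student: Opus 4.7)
The plan is to verify criticality directly from the Hodge data, and then to carry out an explicit matrix computation of $\alpha_{M\otimes N}^+$ using compatible bases for the Betti and de Rham realisations of $M$ and $N$, exploiting the tensor product structure of the comparison isomorphism and the Hodge filtration.

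First, criticality. The Hodge decomposition of $M\otimes N$ is the tensor product of those of $M$ and $N$, so $H^{p,q}(M\otimes N) = H^{0,-1}(M)\otimes H^{0,0}(N) \oplus H^{-1,0}(M)\otimes H^{0,0}(N)$, supported only on bidegrees $(0,-1)$ and $(-1,0)$. The only pair with $p<q$ is $(-1,0)$, which trivially satisfies $p<0$ and $q\geq 0$, so $M\otimes N$ is critical at $s=0$.

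Next, fix a $\Q$-basis $\{e^+,e^-\}$ of $H_B(M)$ diagonalising $F_\infty$, and a $\Q$-basis $\{\omega^{\mathrm{hol}},\omega^{\mathrm{anti}}\}$ of $H_{dR}(M)$ with $\omega^{\mathrm{hol}}$ spanning $F^0H_{dR}(M)$; then $\bar\omega^{\mathrm{anti}}$ is a $\Q$-basis of the quotient used to define $c^\pm(M)$. Writing $I_{M,\infty}(e^\pm) = (\ast)\omega^{\mathrm{hol}} + \beta^\pm\omega^{\mathrm{anti}}$, by definition $\beta^\pm$ represents $c^\pm(M)$ modulo $\Q^\times$. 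On the $N$ side, fix an $\fF$-basis $\{f_i^+\}_{i=1}^\mu\cup\{f_j^-\}_{j=1}^\nu$ of $H_B(N)$ adapted to the $F_\infty$-decomposition, and an $\fF$-basis $\{g_k\}_{k=1}^d$ of $H_{dR}(N)$; the matrix of $I_{N,\infty}$ in these bases is by definition a representative of $\det(I_{N,\infty})$. Since $N$ has weight $0$ and Hodge type $(0,0)$ only, $F^1H_{dR}(N)=0$, which is the crucial input that makes the filtration on $M\otimes N$ factor cleanly.

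The key computation is the effect of passing to the quotient by $F^0H_{dR}(M\otimes N)$. Since $F^0(M\otimes N) = F^0M\otimes F^0N + F^{-1}M\otimes F^1N = \Q\omega^{\mathrm{hol}}\otimes H_{dR}(N)$, the quotient is canonically $\Q\bar\omega^{\mathrm{anti}}\otimes_\Q H_{dR}(N)$, with $\fF$-basis $\{\bar\omega^{\mathrm{anti}}\otimes g_k\}$. An $\fF$-basis of $H_B(M\otimes N)^+ = (H_B(M)^+\otimes H_B(N)^+)\oplus(H_B(M)^-\otimes H_B(N)^-)$ is $\{e^+\otimes f_i^+\}\cup\{e^-\otimes f_j^-\}$, and applying the tensor product comparison isomorphism followed by quotienting by $F^0$ sends
\[
 e^+\otimes f_i^+ \longmapsto \beta^+\,\bar\omega^{\mathrm{anti}}\otimes I_{N,\infty}(f_i^+), \qquad e^-\otimes f_j^- \longmapsto \beta^-\,\bar\omega^{\mathrm{anti}}\otimes I_{N,\infty}(f_j^-).
\]
Assembling these into a $d\times d$ matrix over $\C$ and expanding, the two column blocks acquire scalar factors $\beta^+$ and $\beta^-$ respectively, and the remaining matrix is precisely the one representing $I_{N,\infty}$. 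Taking the determinant yields $c^+(M\otimes N) \equiv (\beta^+)^\mu(\beta^-)^\nu\det(I_{N,\infty}) \equiv c^+(M)^\mu c^-(M)^\nu \det(I_{N,\infty})\pmod{\fF^\times}$.

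The only subtle point in this plan is the identification $F^0H_{dR}(M\otimes N)=\Q\omega^{\mathrm{hol}}\otimes H_{dR}(N)$, which relies on both $F^1N=0$ and $F^{-1}M=H_{dR}(M)$; once this is verified, everything else is bookkeeping on a $d\times d$ block matrix. The main conceptual care is in tracking that $c^+(M)$ and $c^-(M)$ are \emph{separately} well-defined modulo $\Q^\times$, so that the representative chosen can absorb any ambiguity in the final answer, which lives only modulo $\fF^\times$.
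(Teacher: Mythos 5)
Your proposal is correct and follows essentially the same route as the paper: verify criticality from the tensor-product Hodge decomposition, identify $F^0H_{dR}(M\otimes N)=F^0H_{dR}(M)\otimes_\Q H_{dR}(N)$, and compute the determinant of $\alpha^+_{M\otimes N}$ in adapted bases, with your $\beta^\pm$ playing the role of the paper's $\eta_1^\pm$. No gaps.
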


\begin{proof}
The tensor product motive $M\otimes N$ is specified by the data obtained by taking the tensor product of the realisations of $M$ and $N$ and their additional structures; in particular, $M\otimes N$ is a motive of dimension $2d$ and weight $-1$ such that

\begin{enumerate}
  \item[(a)] $H_B(M\otimes N)=H_B(M)\otimes_\Q H_B(N)$ as an $\fF$-vector space.
  \item[(b)] $F_\infty(M\otimes N)=F_\infty(M)\otimes_\Q F_\infty(N)$ as an $\fF$-linear involution.
  \item[(c)] $H_{dR}(M\otimes N)=H_{dR}(M)\otimes_\Q H_{dR}(N)$ as an $\fF$-vector space.
  \item[(d)] The de Rham filtration on $H_{dR}(M\otimes N)$ is 
\[F^kH_{dR}(M\otimes N)=\left\{\begin{array}{ll}
H_{dR}(M\otimes N) & \text{ if } k\leq -1,\\
F^0H_{dR}(M) \otimes_\Q H_{dR}(N) & \text{ if } k=0,\\
0 &\text{ if } k\geq 1.
\end{array}\right.\]
  \item[(e)] The Betti-de Rham comparison isomorphism $I_{M\otimes N, \infty}$ is
  \[\big(H_B(M)\otimes_\Q H_B(N)\big)\otimes_\Q\C \xrightarrow{\;I_{M,\infty}\otimes_\C I_{N,\infty}\;}\big(H_{dR}(M)\otimes_\Q H_{dR}(N)\big)\otimes_\Q\C\]
  viewed as an isomorphism of $\fF_\C$-modules, where we have identified
  \[\big(H_B(M)\otimes_\Q H_B(N)\big)\otimes_\Q\C = \big(H_B(M)\otimes_\Q\C\big)\otimes_\C \big(H_B(N)\otimes_\Q\C\big)\]
  and similarly for the de Rham realisations.
\end{enumerate}
It follows easily from properties (a)-(d) that
\begin{align*}
H_{dR}(M\otimes N)^+&=\frac{H_{dR}(M)\otimes_\Q H_{dR}(N)}{F^0H_{dR}(M)\otimes_\Q H_{dR}(N)} \\[0.5em]
&=\frac{H_{dR}(M)}{F^0H_{dR}(M)}\otimes_\Q H_{dR}(N)\\[0.5em]
&=H_{dR}(M)^+ \otimes_\Q H_{dR}(N),\\[0.5em]
H_B(M\otimes N)^+&=\big(H_B(M)\otimes_\Q H_B(N)\big)^+\\[0.5em]
&=\big(H_B(M)^+\otimes_\Q H_B(N)^+\big)\oplus\big(H_B(M)^-\otimes_\Q H_B(N)^-\big).
\end{align*}

\noindent We choose bases for our various spaces as follows:

\begin{enumerate}
\item a $\Q$-basis $\{\gamma^+\}$ (resp. $\{\gamma^-\}$) for $H_B(M)^+$ (resp. $H_B(M)^-$),
\item a $\Q$-basis $\{\omega_0\}$ for $F^0H_{dR}(M)$ and extend to a basis $\{\omega_0,\omega_1\}$ for $H_{dR}(M),$
\item an $\fF$-basis $\{v_1^+,\ldots,v_\mu^+\}$ (resp. $\{v_1^-,\ldots,v_\nu^-\}$) for $H_B(N)^+$ (resp. $H_B(N)^-$),
\item an $\fF$-basis $\{w_1,\ldots,w_d\}$ for $H_{dR}(N).$
\end{enumerate}

\noindent In terms of these bases, we have
\begin{align*}
I_{M,\infty}(\gamma^\pm\otimes 1) &= \omega_0\otimes \eta_0^\pm + \omega_1\otimes \eta_1^\pm\, \text{ for some  } \eta_0^\pm,\eta_1^\pm \in \C, \\
I_{N,\infty}(v_j^\pm\otimes 1) &= \sum_{i=1}^d (b_{ij}^\pm\otimes\xi_{ij}^\pm)(w_j\otimes 1)\, \text{ for some  } b_{ij}^\pm\otimes\xi_{ij}^\pm \in \fF_\C,
\end{align*}
\noindent and so it follows from (e) that
\[I_{M\otimes N,\infty}(\gamma^\pm\otimes v_j^\pm\otimes 1)=\sum_{i=1}^d(b_{ij}^\pm\otimes\eta_0^\pm\xi_{ij}^\pm)(\omega_0\otimes w_i\otimes 1)+\sum_{i=1}^d (b_{ij}^\pm\otimes\eta_1^\pm\xi_{ij}^\pm)(\omega_1\otimes w_i\otimes 1).\]
Hence, with respect to these bases, the matrix of $\alpha_{M\otimes N}^+$ has $ij$th component:
\[A_{ij}=\left\{\begin{array}{ll}
(1\otimes\eta_1^+)(b_{ij}\otimes\xi^+_{ij}) & \text{if } j\leq \mu, \\
(1\otimes\eta_1^-)(b_{ij}\otimes\xi^+_{ij}) & \text{if } \mu < j \leq n,
\end{array}\right.\]
and so taking determinant yields the desired expression:
\begin{align*}
c^+(M\otimes N)&=(1\otimes \eta_1^+)^\mu(1\otimes\eta_1^-)^\nu\det(I_{N,\infty}) \; \; \text{ mod } \fF^\times\\
&=c^+(M)^\mu c^-(M)^\nu \det(I_{N,\infty}) \; \; \text{ mod } \fF^\times.
\end{align*}

\noindent Finally, to see that $M\otimes N$ is critical, we simply observe that
\[H_B(M\otimes N)\otimes \C = H^{0,-1}(M\otimes N)\oplus H^{-1,0}(M\otimes N),\]
where, viewed as $\fF_\C$-modules, we have
\begin{align*}
H^{0,-1}(M\otimes N)&=H^{0,-1}(M)\otimes_\C H^{0,0}(N),\\
H^{-1,0}(M\otimes N)&=H^{-1,0}(M)\otimes_\C H^{0,0}(N).\qedhere
\end{align*}
\end{proof}

\begin{corollary}\label{twistedperiod}
Let $E$ be an elliptic curve over $\Q$ and $\rho$ be an Artin representation over $\Q.$ Let $\fF/\Q$ be a finite abelian extension over which $\rho$ can be realised and let $\tau$ be an $\fF$-linear representation of $G_\Q$ such that $\C\otimes_\fF \tau \simeq \rho.$ Then $h^1(E)(1)\otimes[\tau]$ is a critical motive and the component of $c^+\big(h^1(E)(1)\otimes[\tau]\big)$ corresponding to our fixed embedding $\fF\subseteq \Qb\subseteq \C$ is
\[\frac{\Omega_+(E)^{d^+(\rho)}|\Omega_-(E)|^{d^-(\rho)}w_\rho}{\sqrt{\f_\rho}}\;\;\mathrm{ mod } \; \fF^\times.\]
\end{corollary}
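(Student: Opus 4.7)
The plan is to apply Theorem \ref{motivecalc} directly, taking $M = h^1(E)(1)$ (a motive over $\Q$ with rational coefficients) and $N = [\tau]$ (the Artin motive with coefficients in $\fF$), whose tensor product is $h^1(E)(1)\otimes[\tau]$. First I would verify the hypotheses. For $M$: the Betti realisation of $h^1(E)$ is a two-dimensional $\Q$-vector space on which complex conjugation acts with trace zero, so both $\pm$-eigenspaces are one-dimensional; the Hodge decomposition of $h^1(E)$ has types $(1,0),(0,1)$, and the Tate twist shifts weight and types by $(1,1)$, giving weight $-1$ and types $(0,-1),(-1,0)$. For $N$: by definition, $[\tau]$ has dimension $\dim\rho$, weight $0$, and pure Hodge type $(0,0)$ (see Example 2.1C of \cite{Venjakob2007}). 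All of these are standard.

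Theorem \ref{motivecalc} then yields
\[
  c^+\bigl(h^1(E)(1)\otimes[\tau]\bigr) \;\equiv\; c^+(h^1(E)(1))^{\mu}\,c^-(h^1(E)(1))^{\nu}\,\det(I_{[\tau],\infty}) \pmod{\fF^\times},
\]
with $\mu=\dim_\fF H_B([\tau])^+$ and $\nu=\dim_\fF H_B([\tau])^-$. Since the involution $F_\infty$ on $H_B([\tau])=\tau$ is induced by the action of $\rho(\iota)$, these dimensions over $\fF$ coincide (after extending scalars to $\C$) with $d^+(\rho)$ and $d^-(\rho)$ respectively. Criticality of the tensor product is already built into the statement of the theorem.

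It then remains to identify the three factors modulo $\fF^\times$. For the elliptic curve, taking $\Q$-bases of $H_B(h^1(E)(1))^\pm$ represented by the cycles $E(\C)^\pm$ (with the orientations used to normalise $\Omega_\pm$) and a $\Q$-basis of $F^0H_{dR}(h^1(E)(1))$ given by a global minimal differential, a direct period-matrix computation yields $c^+(h^1(E)(1))\equiv\Omega_+(E)$ and $c^-(h^1(E)(1))\equiv|\Omega_-(E)|$ modulo $\Q^\times\subseteq\fF^\times$; the absolute value reflects the orientation of $E(\C)^-$ and the fact that $c^-$ is only defined up to $\Q^\times$. For the Artin motive, the classical identity
\[
  \det(I_{[\tau],\infty}) \;\equiv\; \frac{w_\rho}{\sqrt{\f_\rho}} \pmod{\fF^\times}
\]
is due to Deligne \cite[\S6]{Deligne1979}: the Betti--de Rham comparison for $[\tau]$ is given locally by Gauss sums, which assemble globally to the archimedean component of $\eps(\rho)=w_\rho\sqrt{\f_\rho}$, and $\f_\rho\in\Q^\times\subseteq\fF^\times$ is absorbed.

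The main obstacle is this last identification, since it requires unpacking Deligne's theorem on periods of Artin motives and the interpretation of epsilon factors as global Gauss sums; once in hand, combining the three factors with $\mu=d^+(\rho)$, $\nu=d^-(\rho)$ gives exactly the asserted representative
$\Omega_+(E)^{d^+(\rho)}|\Omega_-(E)|^{d^-(\rho)}w_\rho/\sqrt{\f_\rho}$ for $c^+(h^1(E)(1)\otimes[\tau])$ modulo $\fF^\times$. The remaining ingredients are verification of hypotheses and bookkeeping of bases, all of which are routine.
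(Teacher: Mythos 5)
Your overall strategy is exactly the paper's: apply Theorem \ref{motivecalc} with $M=h^1(E)(1)$, $N=[\tau]$, identify $\mu=d^+(\rho)$, $\nu=d^-(\rho)$, and then compute the two elliptic periods and $\det(I_{\tau,\infty})$ separately. The final representative you obtain is also correct. However, two of your intermediate identifications are individually false modulo $\fF^\times$, and your proof only reaches the right answer because the two errors cancel. First, $c^-(h^1(E)(1))$ is $\Omega_-(E)\in i\R_{>0}$ modulo $\Q^\times$, not $|\Omega_-(E)|$: the period of an anti-invariant cycle against a rational differential is genuinely purely imaginary, and neither reversing the orientation of $E(\C)^-$ (which only changes the sign) nor the ambiguity up to $\Q^\times$ can absorb the factor $i$, since $i\notin\Q^\times$ and in general $i\notin\fF^\times$. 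Second, Deligne's identity (\cite[5.6.1]{Deligne1979}, as quoted in the paper) reads $i^{d^-(\tau)}\det(I_{\tau,\infty})\equiv\eps(\tau)\bmod\fF^\times$, so $\det(I_{[\tau],\infty})\equiv i^{-d^-(\rho)}w_\rho/\sqrt{\f_\rho}$, not $w_\rho/\sqrt{\f_\rho}$; your statement drops the $i^{-d^-(\rho)}$.

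The point the paper is careful about, and which your write-up hides, is precisely that the factor $i^{d^-(\rho)}$ coming from $\Omega_-(E)^{d^-(\rho)}=i^{d^-(\rho)}|\Omega_-(E)|^{d^-(\rho)}$ cancels against the $i^{-d^-(\rho)}$ in Deligne's formula for $\det(I_{\tau,\infty})$; this cancellation is the reason the absolute value $|\Omega_-(E)|$ appears in the final answer at all. As written, each of your two steps is wrong by a factor that does not lie in $\fF^\times$, so the proof does not verify step by step even though the product comes out right. The fix is simply to state $c^-(h^1(E)(1))\equiv\Omega_-(E)$ and quote 5.6.1 with its factor of $i^{d^-(\tau)}$, then perform the cancellation explicitly.
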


\begin{proof}
Applying Theorem \ref{motivecalc} with $M=h^1(E)(1)$ and $N=[\tau]$ yields
\[c^+\big(h^1(E)(1)\otimes[\tau]\big)= \Omega_+(E)^{d^+(\tau)}\Omega_-(E)^{d^-(\tau)}\det(I_{\tau,\infty}) \;\; \text{ mod } \fF^\times.\]
Moreover, it follows from \cite[5.6.1]{Deligne1979} that
\[i^{d^-(\tau)}\det(I_{\tau,\infty}) = (\eps(\tau^\gamma) \, : \, \gamma \in \Gal(\fF/\Q))  \;\; \text{ mod } \fF^\times,\]
and so, since $d^\pm(\tau)=d^\pm(\rho),$ $\eps(\tau)=\eps(\rho)$ and $\Omega_-(E)^{d^-(\tau)}=i^{d^-(\tau)}|\Omega_-(E)|^{d^-(\tau)},$ we see that
\[\Omega_+(E)^{d^+(\rho)}|\Omega_-(E)|^{d^-(\rho)}\eps(\rho)\;\; \text{ mod } \fF^\times\]
is equal to the component of $c^+(h^1(E)(1)\otimes[\tau])$ corresponding to the identity in $\Gal(\fF/\Q).$
Therefore, since $\eps(\rho)=w_\rho\sqrt{\f_\rho},$ the result follows on dividing through by $\f_\rho.$ 
\end{proof}


\subsection{Properties of $\sL(E,\rho)$} 

We now turn to $L$-values and the proof of Theorem \ref{thm:introlfunctions}.

\begin{theorem}\label{thm:lfunctions}
Let $E/\Q$ be an elliptic curve and let $\rho$ and $\rho'$ be Artin representations over $\Q$. \linebreak Suppose that $L(E,\rho,s)$ and $L(E,\rho',s)$ admit an analytic continuation to $\C$.
\begin{enumerate}[leftmargin=2em]
\item[L1.] If $\rho=\Ind_{F/\Q}\triv$ for a number field $F$, then
\[\sL(E,\rho)= \lim_{s\to 1}\frac{L(E/F,s)}{(s-1)^r} \cdot \frac{\sqrt{|\Delta_F|}}{\Omega_+(E)^{r_1+r_2}|\Omega_-(E)|^{r_2}},\]
where $(r_1,r_2)$ is the signature of $F$ and $r=\ord_{s=1}L(E/F,s).$

\item[L2.] $\sL(E,\rho\oplus\rho')=\sL(E,\rho)\sL(E,\rho').$
\item[L3.] If the functional equation for $L(E,\rho,s)$ holds near $s=1,$ then
\[\sL(E,\rho)=\dfrac{(-1)^rw_{E,\rho}}{w_\rho^2}\,\sL(E,\rho^*),\]
where $r=\ord_{s=1}L(E,\rho,s).$
\item[L4.] If $\rho$ is self-dual i.e. $\rho\simeq\rho^*,$ then
\begin{enumerate}[topsep=0.35em]
\item[(i)] $\sL(E,\rho) \in \R,$
\item[(ii)] $\sign\,\sL(E,\rho)=\sign w_\rho,$ providing the Riemann Hypothesis holds for $L(E,\rho,s).$
\end{enumerate}

\item[L5.] If $L(E,\rho,1)\!\neq\!0$ and Deligne's period conjecture holds for the twist
of $E$ by $\rho$, then
\begin{enumerate}[topsep=0.5em]
\item[(i)] $\sL(E,\rho) \in \Q(\rho)^\times,$
\item[(ii)] $\sL(E,\rho^\fg)=\sL(E,\rho)^\fg$ for all $\fg \in \Gal(\Q(\rho)/\Q).$
\end{enumerate}

\item[L6.] 
If $\rho$ is a non-trivial primitive Dirichlet character of order $d$, and either
\begin{enumerate}
\item[(i)] $E$ is semistable and has no non-trivial isogenies over $\Q$, or
\item[(ii)] Stevens's Manin constant conjecture holds for $E/\Q$ and $\f_\rho$ is coprime to $\f_E$,
\end{enumerate}
then $\sL(E,\rho)\in\Z[\zeta_d].$

\end{enumerate}

\end{theorem}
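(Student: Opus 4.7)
Parts L1 and L2 are bookkeeping from the definition of $\sL(E,\rho)$. For L1, substituting $\rho=\Ind_{F/\Q}\triv$ I would use Artin formalism $L(E,\Ind_{F/\Q}\triv,s)=L(E/F,s)$, the conductor--discriminant formula $\f_\rho=|\Delta_F|$, the complex-conjugation eigenspace dimensions $(d^+,d^-)=(r_1+r_2,r_2)$ on the regular representation of $\Gal(F/\Q)$, and the triviality of the root number $w_{\Ind_{F/\Q}\triv}=1$ (this is the sign in the functional equation of the Dedekind zeta function of $F$). For L2, each factor in the definition of $\sL$, namely $L(E,\cdot,s)$, $\f_\cdot$, $w_\cdot$, and $d^\pm(\cdot)$, is multiplicative or additive under $\oplus$, so $\sL$ is multiplicative.

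For L3 I would equate leading terms at $s=1$ in the functional equation for $L(E,\rho,s)$: the gamma factors $\Gamma_\C(s)^{\dim\rho}$ and $\Gamma_\C(2-s)^{\dim\rho}$ agree at $s=1$, $N_{E,\rho}^{1-s}=1$ there, and expanding $(2-s-1)^r=(-1)^r(s-1)^r$ introduces the sign $(-1)^r$ on the dual side. Combined with $w_{\rho^*}=\overline{w_\rho}=w_\rho^{-1}$, this yields the stated identity after dividing out the period factors, which are invariant under $\rho\mapsto\rho^*$. For L4(i), self-duality of $\rho$ makes Frobenius eigenvalues on $H^1_\ell(E)\otimes\rho$ closed under complex conjugation at every good prime, so $L(E,\rho,s)$ has real Dirichlet coefficients; its leading term at $s=1$ is therefore real, and since $w_\rho\in\{\pm 1\}$ for self-dual $\rho$, this gives $\sL(E,\rho)\in\R$. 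For L4(ii), the Riemann hypothesis forbids real zeros to the right of $s=1$, so $L(E,\rho,s)>0$ on $(1,\infty)$ (it tends to $1$ at infinity); the leading coefficient at $s=1$ is then positive, and the sign of $\sL(E,\rho)$ is forced by the factor $1/w_\rho$.

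Part L5 is the main payoff of the motivic framework of this section. I would set $\fF=\Q(\rho)$, fix an $\fF$-rational model $\tau$ of $\rho$, and apply Deligne's period conjecture to the motive $M=h^1(E)(1)\otimes[\tau]$, which is critical by Corollary \ref{twistedperiod}. This produces $x\in\fF^\times$ with $L(\sigma,M,0)=\sigma(x)\,c^+(\sigma,M)$ for every $\sigma\in\Sigma_\fF$. By construction of the tensor product motive $L(\sigma,M,0)=L(E,\rho^\sigma,1)$, while Corollary \ref{twistedperiod} applied componentwise, together with the $\Gal(\Q(\rho)/\Q)$-invariance of $d^\pm(\rho)$ and $\f_\rho$, evaluates $c^+(\sigma,M)$ to $\Omega_+(E)^{d^+(\rho)}|\Omega_-(E)|^{d^-(\rho)}w_{\rho^\sigma}/\sqrt{\f_\rho}$ modulo $\fF^\times$. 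Rearranging yields $\sL(E,\rho^\sigma)=\sigma(x)$, which gives (i) for $\sigma=\id$ and (ii) for arbitrary $\sigma\in\Gal(\Q(\rho)/\Q)$ simultaneously.

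The main obstacle is L6, the only part carrying genuine integrality content. My plan is to use the Birch--Stevens modular symbol formula: for a primitive Dirichlet character $\chi$, the product $L(E,\chi,1)\cdot\tau(\bar\chi)$ equals a finite $\Z[\chi]$-linear combination of modular symbols $\Lambda(a/\f_\chi)$ of $E$. Under Stevens's Manin constant conjecture these symbols lie in $\Omega_+(E)\Z$ or $i|\Omega_-(E)|\Z$ according to the parity of $\chi$, and rewriting $\tau(\bar\chi)$ in terms of $w_\chi\sqrt{\f_\chi}$ (with the sign $\chi(-1)=(-1)^{d^-(\chi)}$ absorbed by the absolute value appearing in the definition of $\sL$) identifies $\sL(E,\chi)$ with an element of $\Z[\zeta_d]$. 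This settles case (ii). For case (i) one uses that the Manin constant of a semistable elliptic curve over $\Q$ without rational isogenies equals $1$; the remaining difficulty is that $\f_\chi$ and $\f_E$ may share primes, where the modular symbol identity requires a careful analysis at primes of bad reduction, and integrality ultimately rests on the refined argument of Wiersema--Wuthrich \cite{WW}. This last step is the genuine technical difficulty of the theorem and is precisely where the semistability and no-isogeny hypotheses become essential.
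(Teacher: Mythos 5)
Your treatment of L1--L4 and L6 matches the paper's: L1 and L2 via Artin formalism for $L$, $\f$, $w$ and the computation $d^\pm(\Ind_{F/\Q}\triv)=(r_1+r_2,\,r_2)$; L3 by differentiating the functional equation $r$ times and using $w_\rho w_{\rho^*}=1$; L4 via real Dirichlet coefficients, positivity of the Euler product for large real $s$, and the intermediate value theorem under RH; and L6 by deferring the integrality to Wiersema--Wuthrich, which is exactly what the paper does (it simply cites \cite{WW} Theorems 1 and 2a for cases (i) and (ii) respectively).

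There is one genuine gap, in L5: you ``set $\fF=\Q(\rho)$ and fix an $\fF$-rational model $\tau$ of $\rho$''. Such a model need not exist, because the Schur index of $\rho$ over its character field can be nontrivial. The standard example is the $2$-dimensional irreducible representation of $Q_8$, which has $\Q(\rho)=\Q$ but is not realisable over $\Q$ (or over any real field) --- and this very representation is used in one of the paper's applications, so the case cannot be dismissed. The paper circumvents this by choosing $\fF$ to be \emph{any} finite abelian extension of $\Q$ over which $\rho$ can be realised (one always exists, e.g.\ a suitable cyclotomic field), running Deligne's conjecture for the motive $h^1(E)(1)\otimes[\tau]$ with coefficients in that possibly larger $\fF$ to get $\sL(E,\rho^\gamma)=\sL(E,\rho)^\gamma$ for all $\gamma\in\Gal(\fF/\Q)$, and then descending: since $\rho^\gamma\simeq\rho$ for $\gamma\in\Gal(\fF/\Q(\rho))$, the value $\sL(E,\rho)$ is fixed by $\Gal(\fF/\Q(\rho))$ and hence lies in $\Q(\rho)^\times$, with the Galois equivariance over $\Q(\rho)$ following by restriction. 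Your argument goes through verbatim once you insert this enlargement-and-descent step; without it, the construction of the coefficient motive $[\tau]$ simply fails for representations of nontrivial Schur index.
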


\begin{proof}
For any Artin representation $\rho$ over $\Q,$ we shall denote the leading coefficient in the Taylor series expansion of $L(E,\rho,s)$ at $s=1$ by $\LT(E,\rho,1).$ In this notation, Definition \ref{def:lalg} states that
$$
 \sL(E,\rho)=\frac{\sqrt{\f_\rho}\,\LT(E,\rho,1)}{\Omega_+(E)^{d^+(\rho)}|\Omega_-(E)|^{d^-(\rho)}w_\rho}.
$$
Recall (from \cite{Martinet1977}, for example) that for an Artin representation $\rho$ over a number field $F$, the conductor $\f_{\rho}$ and root number $w_{\rho}$ are, respectively, an integral ideal of $F$ and a complex number of absolute value 1, and that they have the following formal properties: 
\begin{enumerate}[topsep=0.5em,itemsep=0.5em]
\item $\f_{\rho_1\oplus \rho_2}=\f_{\rho_1}\f_{\rho_2}\;$ and $\;w_{\rho_1\oplus\rho_2}=w_{\rho_1} w_{\rho_2},$
\item $\f_{\,\Ind_{L/F}(\rho)}=\mathrm{disc}(L/F)^{\dim\rho} N_{L/F}(\f_\rho)\;$ and $\;w_{\,\Ind_{L/F}(\rho)}=w_\rho.$
\end{enumerate}
We refer to these as ``Artin formalism'', analogously to the case of $L$-functions given in~\S\ref{twistsection}. 

\vspace{0.5em}

L1. By Artin formalism for the other factors, it suffices to prove that
\[d^+(\Ind_{F/\Q}\triv)=r_1+r_2 \;\;\; \text{ and } \;\;\; d^-(\Ind_{F/\Q}\triv)=r_2.\]
Since $\Ind_{F/\Q}\triv$ is the permutation module $G_\Q/G_F,$
we have
\[d^++d^-=[F:\Q] \;\;\; \text{ and } \;\;\; d^+-d^-=\,\text{tr}\big(\!\Ind_{F/\Q}(\triv)(\iota)\big),\]
where $\iota \in G_\Q$ is complex conjugation. However, we also have that
\[\text{tr}\big(\!\Ind_{F/\Q}(\triv)(\iota)\big)= \#\text{ singleton orbits of } \iota \text{ in } G_\Q/G_F=r_1,\]
and so $d^++d^-=r_1+2r_2$ and $d^+-d^-=r_1$ and the claim now follows.

\vspace{0.5em}

\noindent L2. By Artin formalism for the other factors, it suffices to note the identity:
\[d^\pm(\rho\oplus\rho')=d^\pm(\rho)+d^\pm(\rho').\]


\noindent L3. Applying $\left.\dfrac{d^r}{ds^r}\right|_{s=1}$ to the functional equation for $L(E,\rho,s)$ yields
\[\LT(E,\rho,1)=w_{E,\rho}\cdot(-1)^r\LT(E,\rho^*,1),\]
and so, since $\f_{\rho^*}=\f_\rho$ and $d^\pm(\rho^*)=d^\pm(\rho),$ we have

\[\frac{\sqrt{\f_{\rho}}\LT(E,\rho,1)}{\Omega_+(E)^{d^+(\rho)}|\Omega_-(E)|^{d^-(\rho)}w_\rho}=(-1)^r\frac{w_{E,\rho}w_{\rho^*}}{w_\rho}\frac{\sqrt{\f_{\rho^*}}\LT(E,\rho^*,1)}{\Omega_+(E)^{d^+(\rho^*)}|\Omega_-(E)|^{d^-(\rho^*)}w_{\rho^*}}\]

\noindent which, on recalling that
$w_\rho w_{\rho^*}=w_{\rho\oplus \rho^*}=1,$ simplifies to the given formula. 


\vspace{0.5em}

\noindent L4. For $s\in \C$ with Re$(s)\gg 1,$ $L(E,\rho,s)$ can be expressed in terms of its Dirichlet series and since the character of $\rho$ is real-valued it follows that the coefficients of this series are real. In particular, one has $L(E,\rho,s)=\overline{L(E,\rho,\overline{s})}$ for all sufficiently large $s \in \R.$ Since $L(E,\rho,s)$ is analytic everywhere in $\C$ (conjecturally), it follows that $L(E,\rho,s)=\overline{L(E,\rho,\overline{s})}$ for all $s \in \C$ and so, in particular, that $\LT(E,\rho,1) \in \R.$ Moreover, since $\rho$ is self-dual, we have $w_\rho=\pm1$ and so it follows that $\sL(E,\rho) \in \R.$

Assuming the Riemann Hypothesis holds, we have that $L(E,\rho,s)\neq 0$ for all $s \in (1,\infty).$ Moreover, by looking at the Euler product, it is clear that $L(E,\rho,s)\geq 0$ for all sufficiently large $s \in \R$ and so, since $L(E,\rho,s)$ is continuous on $[1,\infty),$ the intermediate value theorem implies that $\LT(E,\rho,1)> 0.$ It thus follows that one has $\sL(E,\rho)w_\rho \in \R_{>0}.$

\vspace{0.5em}

\noindent L5. As in \S \ref{twistsection}, let $\fF/\Q$ be a finite abelian extension over which $\rho$ can be realised and let $\tau$ be an $\fF$-linear representation of $G_\Q$ such that $\C\otimes_\fF \tau \simeq \rho.$ By Corollary \ref{twistedperiod}, the motive $h^1(E)(1)\otimes[\tau]$ is critical and moreover, if $L(E,\rho,1)\neq0$ and Deligne's period conjecture holds, then

\begin{enumerate}

\item[(i)] $\sL(E,\rho) \in \fF^\times,$

\item [(ii)] $\sL(E,\rho^\gamma)=\sL(E,\rho)^\gamma$ for all $\gamma \in \Gal(\fF/\Q).$

\end{enumerate}

\noindent The result follows from this on noting that $\rho^\gamma \simeq \rho$ for all $\gamma \in \Gal(\fF/\Q(\rho)).$

\vspace{0.5em}


\noindent L6. 
(i) This follows directly from \cite[Theorem 1]{WW}. 
(ii) In this case $f_{\rho}$ is not divisible by any prime where $E$ has bad reduction, so the claim follows from \cite[Theorem 2a]{WW}.
\end{proof}

\begin{corollary}\label{cor:main}
Let $E$ be an elliptic curve over $\Q.$ Suppose that $L(E,\rho,s)$ has an analytic continuation to $\C$ for all Artin representations $\rho$ over $\Q$, and set $\BSD(E,\rho)=\sL(E,\rho)$. Then C1-C6 of Conjecture \ref{conj:main} hold subject to the following conditions:
\begin{enumerate}[leftmargin=2em]
\item[C1.] The Birch--Swinnerton-Dyer conjecture holds for $E$ over number fields.
\item[C2.] Unconditional.
\item[C3.] $L(E,\rho,s)$ satisfies the functional equation and Conjecture \ref{conj:equivbsd}.
\item[C4.] $L(E,\rho,s)$ satisfies the Riemann hypothesis.
\item[C5.] $L(E,\rho,s)$ satisfies Deligne's period conjecture.
\item[C6.] Stevens's Manin constant conjecture holds for $E/\Q$.
\end{enumerate}
\end{corollary}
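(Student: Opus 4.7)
The plan is to set $\BSD(E,\rho) := \sL(E,\rho)$ and derive each clause C1--C6 of Conjecture \ref{conj:main} from the corresponding clause L1--L6 of Theorem \ref{thm:lfunctions}. The analytic continuation of all $L(E,\rho,s)$ is assumed throughout (so the relevant parts of Theorem \ref{thm:lfunctions} become available under their stated extra hypotheses), and Conjecture \ref{conj:equivbsd} serves as the dictionary between the analytic quantity $\ord_{s=1}L(E,\rho,s)$ and the arithmetic multiplicity $\langle\rho, E(K)_\C\rangle$ wherever the two appear.

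For C1, assertion L1 rewrites $\sL(E,\Ind_{F/\Q}\triv)$ as exactly the modified leading term on the left-hand side of the classical formula $(\dagger)$ for $L(E/F,s)$; assuming BSD over $F$ (which also supplies finiteness of $\sha_{E/F}$) this equals $\BSD(E/F)$. C2 is a direct restatement of L2. For C3, L3 yields a factor $(-1)^r w_{E,\rho}w_\rho^{-2}$ with $r=\ord_{s=1}L(E,\rho,s)$, and Conjecture \ref{conj:equivbsd} identifies this $r$ with $\langle\rho, E(K)_\C\rangle$, producing precisely the identity required by C3; the functional equation near $s=1$ is exactly what L3 needs. C4 follows from L4, with the sign half using RH as in L4(ii).

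For C5 and C6, the hypothesis $\langle\rho,E(K)_\C\rangle = 0$ translates under Conjecture \ref{conj:equivbsd} to the nonvanishing $L(E,\rho,1)\neq 0$, which is exactly the condition needed to apply L5 and L6. Clause C5 is then a direct restatement of L5 under Deligne's period conjecture, while C6 is a restatement of L6: the two sub-cases in C6 correspond to those in L6, with the coprime-conductor case invoking Stevens's Manin constant conjecture and the semistable-no-isogeny case requiring nothing further.

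No substantive mathematical obstacle arises: the corollary is essentially a bookkeeping translation between the analytic language of Theorem \ref{thm:lfunctions} and the arithmetic language of Conjecture \ref{conj:main}. The only point meriting care is the repeated use of Conjecture \ref{conj:equivbsd}, which refines the rank conjecture of BSD from an aggregate statement about $\ord_{s=1}L(E/K,s)$ into a statement about its individual Artin components; this refinement is what licences the translations in C3 (where it is listed explicitly) as well as in C5 and C6 (where it is used implicitly, via the conditional hypothesis on $\langle\rho, E(K)_\C\rangle$).
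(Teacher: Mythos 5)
Your proposal is correct and matches the paper's (implicit) argument: the corollary is stated without a separate proof precisely because it is the direct translation of L1--L6 of Theorem \ref{thm:lfunctions} into C1--C6, with the BSD formula supplying C1 and Conjecture \ref{conj:equivbsd} converting $\ord_{s=1}L(E,\rho,s)$ into $\langle\rho,E(K)_\C\rangle$. You are also right to flag the one genuinely delicate point — that the hypothesis $\langle\rho,E(K)_\C\rangle=0$ in C5 and C6 must be converted into $L(E,\rho,1)\neq 0$ via Conjecture \ref{conj:equivbsd}, a dependence the corollary's condition list leaves implicit.
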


In the following corollary we prove the remaining parts of Theorem \ref{thm:introlfunctions}. In particular, parts (1)-(4) record some of the formal consequences of the (conjectural) properties L1-L5 stated in Theorem \ref{thm:lfunctions} and parts (5)-(7) use the classical Birch$-$Swinnerton-Dyer conjecture to make predictions about the norm of $\sL(E,\rho).$

\begin{corollary}\label{miniprojectthm}
Let $E/\Q$ be an elliptic curve and let $\rho$ be an Artin representation over $\Q.$
Suppose that $L(E,\rho,s)$ admits an analytic continuation to all of $\C$ and that $L(E,\rho,1)\neq 0.$ Suppose moreover that $L(E,\rho,s)$ satisfies the functional equation, the Riemann Hypothesis and Deligne's period conjecture. Then

\begin{enumerate}[leftmargin=*]

\item $U(E,\rho):=\dfrac{\sL(E,\rho^*)}{\sL(E,\rho)}$ is a root of unity in $\Q(\rho).$

\item $\sL(E,\rho)\cdot\cO_{\Q(\rho)}$ is invariant as a fractional ideal under complex conjugation.

\item $\zeta\cdot \sL(E,\rho) \in \Q(\rho,\zeta)^+$ where $\zeta \in \C$ is such that $\zeta^2=U(E,\rho).$

\item $U(E,\rho)=w_\rho^2 w_{E,\rho}^{-1}$ and, if $\f_E$ is coprime to $\f_\rho,$ this equals $(-1)^{d^-(\rho)}w_E^{\dim\rho}\det\rho(\f_E),$
where $\det\rho$ is regarded as a primitive Dirichlet character.
\end{enumerate}

\noindent Let $\fG=\Gal(\Q(\rho)/\Q)$ and choose number fields $F_i,F_j'$ and a positive integer $m$ such that
$$
\Big(\bigoplus_{\fg\in\fG}\rho^\fg\Big)^{\oplus m}\oplus\,\bigoplus_i \Ind_{F_i/\Q}\triv \;\simeq\; \bigoplus_j \Ind_{F_j'/\Q}\triv
$$
(these exist by Remark \ref{rmk:rationaltrace}),
and write $B$ for the unique positive real number such that
\[B^m=\frac{\prod_j\BSD(E/F_j')}{\prod_i\BSD(E/F_i)}.\]
Suppose that the Birch$-$Swinnerton-Dyer conjecture holds for $E$ over number fields. Then

\begin{enumerate}[leftmargin=*]

\item[(5)] $N_{\Q(\rho)/\Q}(\sL(E,\rho))=\pm B$, with sign $+$ if $m$ is odd.

\item[(6)] If $\rho\not\simeq\rho^*$ and $\zeta \in \Q(\rho),$ then $N_{\Q(\rho)^+/\Q}(\zeta\cdot\sL(E,\rho)) = \pm\sqrt{B}.$

\item[(7)] If $\rho\not\simeq\rho^*$ and $\zeta \notin \Q(\rho),$ then $N_{\Q(\rho,\zeta)^+/\Q}(\zeta\cdot\sL(E,\rho))=\pm B.$

\end{enumerate}
\end{corollary}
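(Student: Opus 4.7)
The plan is to establish parts (1)--(4) from the functional equation (L3) and Galois equivariance/rationality (L5), using a single external input (a local root number identity for (4)), and then to derive (5)--(7) by applying norms in appropriate towers of fields to the given virtual decomposition and combining with L1, L2.

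I would first handle (1). The hypothesis $L(E,\rho,1)\neq 0$ forces $r=0$ in L3, giving immediately $U(E,\rho)=w_\rho^2 w_{E,\rho}^{-1}$, an algebraic number of absolute value $1$. Applying the same computation to each Galois conjugate $\rho^\fg$ and using L5 shows that every conjugate of $U(E,\rho)\in\Q(\rho)^\times$ lies on the unit circle; as $\Q(\rho)$ is a subfield of a cyclotomic field and $U(E,\rho)$ is a product/quotient of Artin root numbers, Kronecker's theorem forces $U(E,\rho)$ to be a root of unity, which gives simultaneously the first half of (4). Part (2) is then immediate: complex conjugation in $\Gal(\Q(\rho)/\Q)$ sends the character of $\rho$ to that of $\rho^*$, so by L5 sends $\sL(E,\rho)$ to $\sL(E,\rho^*)=U(E,\rho)\sL(E,\rho)$, and the ratio is a unit of $\cO_{\Q(\rho)}$. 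Part (3) is a one-line check: $\overline{\zeta\,\sL(E,\rho)}=\bar\zeta\,\sL(E,\rho^*)=\zeta^{-1}U(E,\rho)\sL(E,\rho)=\zeta\,\sL(E,\rho)$. For the second half of (4), namely $w_\rho^2 w_{E,\rho}^{-1}=(-1)^{d^-(\rho)}w_E^{\dim\rho}\det\rho(\f_E)$ under coprimality of conductors, I would decompose $w_{E,\rho}$ into local epsilon factors: the archimedean place contributes $(-1)^{d^-(\rho)}$ via the $\Gamma$-factor shifts; primes dividing $\f_\rho$ (at which $E$ has good reduction) contribute $w_\rho^2$ via the formula for twisting an unramified two-dimensional representation by a ramified local character; and primes dividing $\f_E$ (at which $\rho$ is unramified) contribute $w_E^{\dim\rho}\det\rho(\f_E)$ via the standard Deligne/Rohrlich formula for twisting a local Galois representation by an unramified character.

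Parts (5)--(7) are then formal. For (5), applying L2 and L1 to the given virtual decomposition, together with the L5 identity $\prod_\fg \sL(E,\rho^\fg)=N_{\Q(\rho)/\Q}(\sL(E,\rho))$, yields $N_{\Q(\rho)/\Q}(\sL(E,\rho))^m = B^m$; the norm lies in $\Q\subset\R$ and $B>0$, so it equals $\pm B$, forced to $+B$ for $m$ odd by monotonicity of $x\mapsto x^m$ on $\R$. For (6), I would work in the tower $\Q(\rho)/\Q(\rho)^+/\Q$: since $\rho\not\simeq\rho^*$ the first step is degree $2$ with Galois group generated by complex conjugation, giving $N_{\Q(\rho)/\Q(\rho)^+}(\sL(E,\rho))=\sL(E,\rho)\sL(E,\rho^*)=U(E,\rho)\sL(E,\rho)^2=(\zeta\,\sL(E,\rho))^2$; then (5) yields $N_{\Q(\rho)^+/\Q}(\zeta\,\sL(E,\rho))^2=\pm B$, and since $\zeta\,\sL(E,\rho)$ is real by (3) the left side is non-negative, forcing the sign to $+B$ and giving $\pm\sqrt{B}$. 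For (7), I would instead use the tower $\Q(\rho,\zeta)/\Q(\rho)/\Q$: the nontrivial $\Q(\rho)$-automorphism sends $\zeta\mapsto -\zeta$, so $N_{\Q(\rho,\zeta)/\Q(\rho)}(\zeta\,\sL(E,\rho))=-U(E,\rho)\sL(E,\rho)^2$, and $N_{\Q(\rho,\zeta)^+/\Q}(\zeta\,\sL(E,\rho))^2=N_{\Q(\rho,\zeta)/\Q}(\zeta\,\sL(E,\rho))$ works out to $(-1)^n N_{\Q(\rho)/\Q}(U(E,\rho))\cdot B^2$, where $n=[\Q(\rho):\Q]$; positivity of the square then pins this down to $B^2$, yielding $\pm B$.

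The main obstacle is the explicit local root number identity in the second half of (4); everything else is formal manipulation with norms in field towers combined with the analytic and Galois-theoretic properties of $\sL(E,\rho)$ already established in L1--L5.
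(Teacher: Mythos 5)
Your proposal is correct and follows essentially the same route as the paper: parts (1)--(4) are extracted from L3 and L5 (with L4 in the background), the explicit root-number identity in the second half of (4) being precisely the content of \cite[Theorem 16]{Dokchitser2005}, which the authors simply cite rather than re-derive via local epsilon factors as you sketch; and parts (5)--(7) are the same norm computations, your field-tower bookkeeping for (6) and (7) being a reformulation of the paper's representation-theoretic identity $\bigoplus_{\gamma \in \Gamma}\rho^\gamma \simeq \bigoplus_{\eta\in\Eta}(\rho\oplus\rho^*)^\eta$. The only other (harmless) divergence is in (3), where the paper deduces total positivity of $(\zeta\cdot\sL(E,\rho))^2$ from L4 applied to the self-dual representation $\rho\oplus\rho^*$, whereas you verify reality of $\zeta\cdot\sL(E,\rho)$ directly from L5 and the fact that $\Q(\rho,\zeta)$ is abelian.
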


\begin{proof}

1) It follows from L5 that $U(E,\rho) \in\Q(\rho)$ and that
$$
  U(E,\rho)^\fg = \overline{\sL(E,\rho^\fg)}/\sL(E,\rho^\fg) \; \; \; \forall \fg \in \fG.
$$
In particular, one has $|U(E,\rho)^\fg|=1$ for all $\fg \in\fG,$ and so $U(E,\rho)$ is indeed a root of unity.

\smallskip

\noindent 2) This follows directly from (1) on noting that, by L5, one has $\sL(E,\rho^*)=\overline{\sL(E,\rho)}$.

\smallskip

\noindent 3) It follows from L2 and L5 that
\[\sL(E,\rho^\fg\oplus (\rho^\fg)^*)=U(E,\rho^\fg)\sL(E,\rho^\fg)^2=\left(U(E,\rho)\sL(E,\rho)^2\right)^\fg \; \; \; \forall\fg \in \fG,\]
and so, recalling that $w_{\rho^\fg\oplus(\rho^\fg)^*}=1$ for any $\fg \in \fG,$ it follows from L4 that
\[(U(E,\rho)\sL(E,\rho)^2)^\fg \in \R_{>0} \; \; \; \forall \fg \in \fG.\]
Hence, taking $\zeta \in \C$ such that $\zeta^2=U(E,\rho),$ we see that $\zeta\cdot \sL(E,\rho) \in \Q(\rho,\zeta)^+.$

\smallskip

\noindent 4) The first statement follows immediately from L3 and the second from \cite[Theorem 16]{Dokchitser2005}.

\smallskip

\noindent 5) Writing $R(\rho)=\bigoplus_{\fg \in \fG}\rho^\fg,$ it follows from L2 and L5 that 
\[\sL(E,R(\rho))=N_{\Q(\rho)/\Q}(\sL(E,\rho)).\]
However, since $R(\rho)^{\oplus m}\oplus \bigoplus_i \Ind_{F_i/\Q}\triv \simeq \bigoplus_j \Ind_{F_j'/\Q}\triv,$ another application of L2 gives
\[\sL(E,R(\rho))^m=\frac{\prod_j\sL(E,\Ind_{F_j'/\Q}\triv)}{\prod_i\sL(E,\Ind_{F_i/\Q}\triv)}.\]
Hence, by L1 together with the Birch$-$Swinnerton-Dyer conjecture, we get
\[\sL(E,R(\rho))^m=B^m,\]
and so the result follows on taking $m$th roots.

\vspace{0.75em}

\noindent 6)\,\&\,7) Let $\Gamma=\Gal(\Q(\rho,\zeta)/\Q)$ and $\Eta=\Gal(\Q(\rho,\zeta)^+/\Q).$ If $\rho\not\simeq\rho^*,$ then
$$
  \bigoplus_{\gamma \in \Gamma} \rho^\gamma \; \simeq \; \bigoplus_{\eta \in \Eta}\,(\rho\oplus\rho^*)^\eta,
$$
and so it follows from L2, L3, L4 and L5 that
\[\sL\Big(E,\;\bigoplus_{\gamma \in \Gamma}\rho^\gamma\Big) = N_{\Q(\rho,\zeta)^+/\Q}(\sL(E,\rho\oplus \rho^*)) = N_{\Q(\rho,\zeta)^+/\Q}(\zeta\cdot\sL(E,\rho))^2.\]
On the other hand, L2 gives us
\[\sL\Big(E,\;\bigoplus_{\gamma \in \Gamma}\rho^\gamma\Big) = \left\{\begin{array}{ll}
\sL(E,R(\rho)) & \text{if } \zeta \in \Q(\rho), \\
\sL(E,R(\rho))^2 & \text{if } \zeta \notin \Q(\rho),
\end{array}\right.\]
and so the results follow from L1 together with the Birch$-$Swinnerton-Dyer conjecture. 
\end{proof}


\section{Arithmetic applications}\label{s:vladimir}

In order to obtain arithmetic applications of Conjecture \ref{conj:main} we shall make use of (C5), which is the analogue of the Galois equivariance property of $L$-values. As we do not have an exact expression for $\BSD(E,\rho)$ in general, we shall take the following approach. 
The representation $\tau=\bigoplus_{\fg\in\Gal(\Q(\rho)/\Q)}\rho^\fg$ has rational trace and hence $\BSD(E,\tau)$ can, 
on the one hand, be expressed in terms of BSD-quotients $\BSD(E/K_i)$ for suitable fields $K_i$ (see Remark \ref{rmk:rationaltrace}), and, on the other hand, is the norm of $\BSD(E,\rho)$ from $\Q(\rho)$ to $\Q$ by (C5). As we shall illustrate, this places non-trivial constraints on the $\BSD(E/K_i)$, and hence on ranks and the Tate--Shafarevich groups. We stress that the expression is the norm of an {\em element} of $\Q(\rho)$, rather than just of a fractional ideal.

\begin{theorem}\label{thm:norm}
Let $G$ be a finite group and $\rho$ an irreducible representation. Write
$$
  \qquad \Big(\bigoplus_{\fg\in\Gal(\Q(\rho)/\Q)} \!\!\!\rho^{\fg}\Big)^{\oplus m} 
  = \bigl(\bigoplus_i \Ind_{H_i}^G \triv\bigr) \ominus \bigl(\bigoplus_j \Ind_{H_j'}^G \triv\bigr),
$$
for some $m\in\Z$ and some subgroups $H_i, H_j'<G$.
If Conjecture \ref{conj:main} (C1, C2, C5) hold, then for every elliptic curve $E/\Q$ and Galois extension $F/\Q$ with Galois group $G$, either
$$
 \langle\rho,E(F)_\C\rangle > 0
$$
or
$$
  \frac{\prod_i \BSD(E/F^{H_i})}{\prod_j \BSD(E/F^{H_j'})} = N_{\Q(\rho)/\Q}(x)^m
$$
for some $x\in\Q(\rho)$. 
Moreover, if $\rho$ is non-trivial, $G$ is abelian of exponent $d$ and (C6) of Conjecture \ref{conj:main} holds, then one can take $x\in\Z[\zeta_d]$ provided that either $\f_E$ and $\f_\rho$ are coprime, or $E$ is semistable and has no non-trivial isogenies.
\end{theorem}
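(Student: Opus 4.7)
The plan is to apply the axioms (C1), (C2), (C5) of Conjecture \ref{conj:main} to the rational-trace Artin representation
$$
\tau \;=\; \bigoplus_{\fg\in\Gal(\Q(\rho)/\Q)} \rho^\fg,
$$
and to deduce the integrality refinement from (C6). We may assume $\langle\rho,E(F)_\C\rangle = 0$, since otherwise the ``either'' branch of the conclusion already holds.

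First I note that $E(F)\otimes_\Z\Q$ is a genuine $\Q$-rational representation of $\Gal(F/\Q)$, so the character of $E(F)_\C$ is $\Q$-valued. In particular $\langle\rho^\fg,E(F)_\C\rangle = \langle\rho,E(F)_\C\rangle = 0$ for every $\fg\in\Gal(\Q(\rho)/\Q)$, so (C5) applies to each twist $\rho^\fg$ and gives $\BSD(E,\rho^\fg) = \BSD(E,\rho)^\fg$. Combining with (C2) yields
$$
\BSD(E,\tau) \;=\; \prod_\fg \BSD(E,\rho^\fg) \;=\; \prod_\fg \BSD(E,\rho)^\fg \;=\; N_{\Q(\rho)/\Q}(\BSD(E,\rho)).
$$

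Next I rearrange the given formal decomposition of $\tau^{\oplus m}$ into the honest isomorphism
$$
\tau^{\oplus m} \;\oplus\; \bigoplus_j \Ind_{H_j'}^G\triv \;\simeq\; \bigoplus_i \Ind_{H_i}^G\triv,
$$
where each $\Ind_{H}^G\triv$ is canonically identified with the Artin representation $\Ind_{F^H/\Q}\triv$ of $G_\Q$. Applying (C2) and then (C1) produces
$$
\BSD(E,\tau)^m \cdot \prod_j \BSD(E/F^{H_j'}) \;=\; \prod_i \BSD(E/F^{H_i}),
$$
and combining this with the norm identity above gives the required equation with $x = \BSD(E,\rho) \in \Q(\rho)^\times$.

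For the moreover statement, $G$ abelian forces the irreducible $\rho$ to be one-dimensional, hence a character of order some $d'\mid d$. Via Kronecker--Weber $\rho$ corresponds to a primitive Dirichlet character of the same order $d'$, so the remaining hypotheses of (C6) yield $\BSD(E,\rho)\in\Z[\zeta_{d'}]\subseteq\Z[\zeta_d]$, allowing $x$ to be chosen in $\Z[\zeta_d]$. The argument is essentially axiom-chasing; the only content-bearing step is the observation that the character of $E(F)_\C$ is $\Q$-valued, which is what licenses the simultaneous application of (C5) to every Galois conjugate $\rho^\fg$.
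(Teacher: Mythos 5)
Your proposal is correct and is exactly the argument the paper intends: the paper's own proof is the single line ``If $\langle\rho,E(F)_\C\rangle=0$, then the formula is satisfied by $x=\BSD(E,\rho)$,'' and your write-up simply supplies the implicit axiom-chasing (C5 applied to each conjugate $\rho^\fg$ via the rationality of the character of $E(F)_\C$, then C2 and C1 on the rearranged decomposition, and C6 for the integrality refinement). No substantive difference in approach.
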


\begin{proof}
If $\langle\rho, E(F)_\C\rangle=0$, then the formula is satisfied by $x = \BSD(E,\rho)$.
\end{proof}

In order to make use of the above theorem in specific settings, we will need to control the various terms in the $\BSD$ factors of the formula. For convenience of the reader we have recorded in \S\ref{ss:basic} some standard facts about Selmer groups, Tamagawa numbers and the term $|\omega/\omega^{\min}|$ that will be used in our computations.


\subsection{Interplay between $p$-primary parts of the Tate-Shafarevich group}

For our first application we will take the simplest setting, when the Galois group is cyclic of prime order, and make use of the fact that the ratio of $\BSD$-terms is the norm of a {\em principal} ideal. 
The basic  idea is that if the $p$-part of this number cannot be expressed as the norm of a principal ideal, then, necessarily, the $q$-primary part must be non-trivial for some other prime $q$.

\begin{theorem}\label{thm:makesha}
Let $\ell$ and $p$ be primes such that the primes above $p$ in $\Q(\zeta_\ell)$ are non-principal and have residue degree 2.
If Conjecture \ref{conj:main} holds then for every semistable elliptic curve $E/\Q$ with no non-trivial isogenies, with $|\sha_{E/\Q}[p]|\!=\!1$ and $c_v\!=\!1$ for all rational primes $v$, and for every cyclic extension $F/\Q$ of degree $\ell$ with $E(F)=E(\Q)$,
$$
 \text{if } |\sha_{E/F}[p^\infty]|=p^2 \text{ then } |\sha_{E/F}[q^\infty]|\neq 1 \text{ for some } q\neq p, \ell.
$$ 
\end{theorem}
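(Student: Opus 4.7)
My plan is to apply Theorem~\ref{thm:norm} to a non-trivial character of $G = \Gal(F/\Q) \cong \Z/\ell\Z$ and then extract a contradiction from the resulting norm identity in $\Z[\zeta_\ell]$, exploiting the hypothesis that primes above $p$ are non-principal with residue degree~$2$.

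First, I would set up Theorem~\ref{thm:norm} with $\rho$ a non-trivial character of $G$. The Galois orbit $\{\rho^{\fg} : \fg \in \Gal(\Q(\zeta_\ell)/\Q)\}$ consists of every non-trivial character of $G$, so that as virtual representations
$$
\bigoplus_{\fg} \rho^{\fg} \;=\; \Ind_{\{e\}}^G\triv \,\ominus\, \Ind_G^G\triv,
$$
which puts us in the set-up of the theorem with $m=1$, $\{H_i\} = \{\{e\}\}$ and $\{H_j'\} = \{G\}$. Since $E(F) = E(\Q)$, the multiplicity $\langle\rho, E(F)_\C\rangle$ vanishes; and because $E$ is semistable with no non-trivial $\Q$-isogenies, the integrality clause (C6) of Conjecture~\ref{conj:main} applies, giving an $x \in \Z[\zeta_\ell]$ with
$$
\frac{\BSD(E/F)}{\BSD(E/\Q)} \;=\; N_{\Q(\zeta_\ell)/\Q}(x).
$$

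Next, I would pin down the shape of this ratio by computing its $v_p$. By hypothesis neither $\sha_{E/\Q}$ nor the Tamagawa product over $\Q$ contributes any $p$, and the torsion orders agree on the two sides since $E(F) = E(\Q)$. The regulator ratio is a power of $\ell$. Semistability of $E$ ensures that a global minimal Weierstrass model over $\Q$ remains minimal after base change to each completion $F_w$, so $|\omega/\omega_w^{\min}|_w = 1$ for all $w$; and a short case analysis on the multiplicative Kodaira type $I_n$ (split with $n=1$, or non-split with $n$ odd, these being the only options compatible with $c_v=1$) under a cyclic extension of odd prime degree~$\ell$ shows that every Tamagawa number $c_w(E/F)$ lies in $\{1,\ell\}$. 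Consequently, under the contradiction hypothesis that $|\sha_{E/F}[q^\infty]| = 1$ for all $q \neq p, \ell$, one obtains
$$
\frac{\BSD(E/F)}{\BSD(E/\Q)} \;=\; \ell^a\cdot p^2
$$
for some integer $a$, with $a \geq 0$ forced by $N_{\Q(\zeta_\ell)/\Q}(x)\in\Z$.

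Finally, I would derive the contradiction by factoring the principal ideal $(x)$ in $\Z[\zeta_\ell]$. The unique prime above $\ell$ is $\lambda = (1-\zeta_\ell)$, with $N(\lambda) = \ell$ and principal; every prime above $p$ has norm $p^2$ by the residue-degree hypothesis. The identity $N_{\Q(\zeta_\ell)/\Q}(x) = \ell^a p^2$ therefore pins down the factorisation
$$
(x) \;=\; \lambda^a \cdot \mathfrak{p}
$$
for a single prime $\mathfrak{p}$ above $p$ (any other prime divisor would introduce a rational prime $\neq p,\ell$ into the norm). Since $(x)$ and $\lambda$ are both principal, $\mathfrak{p} = (x)\lambda^{-a}$ must be principal too, contradicting the non-principality hypothesis on primes above $p$.

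The step I expect to be the main obstacle is the second one: cleanly ruling out any $p$-adic contribution to $\BSD(E/F)/\BSD(E/\Q)$ other than the $p^2$ from Sha. The hypotheses (semistability, $c_v=1$ over $\Q$, no non-trivial isogenies, and $\ell$ odd with primes above $p$ of residue degree $2$) are exactly what is needed so that every other factor either lives at $\ell$ or is a $p$-adic unit; the final class-group argument is then a one-line consequence.
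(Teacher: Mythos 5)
Your proof is correct and follows essentially the same route as the paper: the decomposition $\bigoplus_\fg\rho^\fg=\C[G]\ominus\triv$, Theorem~\ref{thm:norm} with the integrality clause (C6), the local analysis showing the ratio is $\ell^a\cdot|\sha_{E/F}|/|\sha_{E/\Q}|$, and the class-group contradiction via the principal prime $(1-\zeta_\ell)$. The one step you elide is why the prime-to-$\{p,\ell\}$ part of $|\sha_{E/\Q}|$ cannot sit in the denominator of the ratio: the paper handles this by first noting that $\sha_{E/\Q}[q^\infty]$ injects into $\sha_{E/F}[q^\infty]$ for $q\neq\ell$ (via restriction on Selmer groups, Lemma~\ref{lem:basics}(\ref{lem:basics-selmer})), so that the contradiction hypothesis forces $\sha_{E/\Q}[q^\infty]=1$ as well; alternatively, a leftover $q^{-2k}$ would already contradict $N_{\Q(\zeta_\ell)/\Q}(x)\in\Z$, so your argument survives either way, but the step should be stated.
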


\begin{proof}
We will in fact prove the stronger statement that $\sha_{E/\Q}[q^\infty]$ is strictly smaller than $\sha_{E/F}[q^\infty]$ for some $q\neq p, \ell$.  The fact that it is a subgroup for all $q\neq \ell$ is standard: 
it is true for $q^k$-Selmer groups by Lemma \ref{lem:basics}, and as $E(\Q)=E(F)$ it is also true for $\sha[q^\infty]$.

A 1-dimensional faithful representation $\chi$ of $\Gal(F/\Q)\simeq C_\ell$ has $\Q(\chi)=\Q(\zeta_\ell)$. Now
$$
 \sum_{\fg\in\Gal(\Q(\chi)/\Q)} \chi^{\fg} = \C[G] \ominus \triv,
$$
so by Theorem \ref{thm:norm}
$$
 \frac{\BSD(E/F)}{\BSD(E/\Q)}= N_{\Q(\zeta_\ell)/\Q}(x)
$$
for some $x\in \Z[\zeta_\ell]$. As $E(F)=E(\Q)$, it follows that $E(F)_{\tors}=E(\Q)_{\tors}$ and $\Reg_{E/F}=\ell^{\rk E/\Q} \Reg_{E/\Q}$ (Lemma \ref{lem:basics}(\ref{lem:basics-height})). 
As $E/\Q$ is semistable, the contributions to $C_{E/F}$ and $C_{E/\Q}$ only come from Tamagawa numbers  (Lemma \ref{lem:basics}(\ref{lem:basics-semistableomega})). These are trivial over $\Q$ by hypothesis, so are also trivial at all primes that split in $F/\Q$; if $v$ is a prime of multiplicative reduction that does not split then the corresponding Tamagawa number over $F$ is 1 unless the reduction is split multiplicative and the prime ramifies, in which case it is $\ell$ (Lemma \ref{lem:basics}(\ref{lem:basics-semistabletamagawa})). Putting this together we deduce that 
$$
  \qquad\frac{|\sha_{E/F}|}{|\sha_{E/\Q}|} \cdot \ell^{n} = N_{\Q(\zeta_\ell)/\Q}(x) \qquad \text{for some } n\in\Z.
$$
Note that $\ell$ is totally ramified in $\Q(\zeta_\ell)$ and the ideal above it $(\zeta_\ell-1)$ is principal. Thus if $\sha_{E/F}[q^\infty]=\sha_{E/\Q}[q^\infty]$ for all $q\neq p,\ell$, it would follow that $p^2$ is the norm of a principal ideal of $\Z[\zeta_\ell]$. By assumption, the primes above $p$ have norm $p^2$ and are non-principal, so this is not the case.
\end{proof}

\begin{example}\label{ex:makesha}
Let $E/\Q$ be a semistable elliptic curve with no non-trivial isogenies,
with $\prod_v c_v=1$ and $\sha_{E/\Q}\!=\!1$, and let $F/\Q$ be the degree 229 subfield of $\Q(\zeta_{2749})$. In this setting, 
$$
 \text{if} \qquad E(\Q)=E(F) \qquad \text{then} \qquad |\sha (E/F)|\neq p^2, 
$$
for the prime $p=1148663$.
Indeed, $p$ is a prime which has residue degree 2 in $\Q(\zeta_{229})$, and the prime above it is 
non-principal (this is hard to achieve, which is why $\ell=229$ is so large), so this is a consequence of the above theorem. 
However, it is perfectly possible for such a curve to have
$$
  E(\Q)=E(F) \qquad \text{and} \qquad |\sha (E/F)|= p^2\times (\text{integer coprime to } p),
$$
as, for instance, is the case for the elliptic curve 2749a1. (This is based on a Magma computation of the analytic order of $\sha$ and the analytic rank, and assumes the BSD conjecture.) 
\end{example}


\subsection{Forcing non-trivial Selmer groups}

The $\BSD$-terms $\BSD(E/F^{H_i})$ in Theorem \ref{thm:norm} are composed of ``hard'' global invariants (Tate--Shafarevich group and points of infinite order) and ``easy'' local invariants (Tamagawa numbers and differentals). We will now illustrate how the result can be used to make the easy local data force non-trivial behaviour of global invariants. Once again, we will exploit the fact that the ratio of $\BSD$-terms is the norm of a principal ideal. We focus on non-abelian groups of the form $G=C_{q_1q_2}\!\rtimes\! C_r$, and begin by simplifying the norm condition.

\begin{theorem}\label{thm:qqm}
Let $q_1, q_2, r$ be distinct odd primes with $q_1, q_2 \equiv 1\mod r$, but $q_1, q_2 \not\equiv 1\mod r^2$, and with $q_1$ an $r^{\text th}$ power in $\Z/q_2\Z$ and vice versa.
Let $G=C_{q_1q_2}\!\rtimes\! C_r$ with 
$C_r$ acting non-trivially on both the $C_{q_1}$ and $C_{q_2}$ subgroups.

If Conjecture \ref{conj:main} holds, 
then for every elliptic curve $E/\Q$ and every Galois extension $F/\Q$ with Galois group $G$, either $\rk E/F>0$ or
$$
\frac{\BSD(E/F^{C_r})\BSD(E/\Q)}{\BSD(E/F^{C_{q_1}\rtimes C_r})\BSD(E/F^{C_{q_2}\rtimes C_r})} = q_1^a q_2^b m
$$
for some $a, b\in \Z$ and $m\in\Q$ with $m\equiv 1 \!\mod q_1q_2$.
\end{theorem}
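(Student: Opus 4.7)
The plan is to apply Theorem~\ref{thm:norm} to an irreducible $\rho$ whose Galois orbit realises the given BSD ratio, then extract the modular congruence via local class field theory at $q_1$ and $q_2$.

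First I would pick non-trivial characters $\chi_i$ of $C_{q_i}$ and set $\rho=\Ind_{C_{q_1 q_2}}^G(\chi_1,\chi_2)$, viewing $(\chi_1,\chi_2)$ as a character of $C_{q_1q_2}=C_{q_1}\times C_{q_2}$. Since $C_r$ acts non-trivially (hence faithfully, being of prime order) on each factor, $(\chi_1,\chi_2)$ has trivial $C_r$-stabiliser and $\rho$ is irreducible of dimension $r$. Decomposing each $\Ind_H^G\triv$ for $H=C_r,\,C_{q_1}\!\rtimes\!C_r,\,C_{q_2}\!\rtimes\!C_r$ via $C_r$-orbits on $\mathrm{Hom}(C_{q_1 q_2},\C^\times)$ shows that
\[
\Ind_{C_r}^G\triv + \triv - \Ind_{C_{q_1}\rtimes C_r}^G\triv - \Ind_{C_{q_2}\rtimes C_r}^G\triv = \bigoplus_{O:\,\text{both components non-trivial}} \Ind_{C_{q_1 q_2}}^G \chi_O,
\]
and since $\Gal(\Q(\zeta_{q_1 q_2})/\Q)$ acts transitively on pairs $(\chi_1,\chi_2)$ with both components non-trivial, the right-hand side is exactly $\bigoplus_\fg \rho^\fg$ (so $m=1$ in the notation of Theorem~\ref{thm:norm}). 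The theorem then yields: either $\langle \rho, E(F)_\C\rangle>0$ — forcing $\rk E/F \geq \dim\rho = r > 0$ — or the BSD-ratio equals $N_{\Q(\rho)/\Q}(x)$ for some $x\in \Q(\rho)^\times$.

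The main step is the local analysis. $\Q(\rho)$ sits inside $\Q(\zeta_{q_1 q_2})$ as the fixed field of the cyclic subgroup $H=\langle(\zeta_1,\zeta_2)\rangle$ of $(\Z/q_1)^\times\times(\Z/q_2)^\times$, where $\zeta_i$ is the order-$r$ element through which $C_r$ acts on $C_{q_i}$. Setting $f:=\ord_{q_2}(q_1)$, the hypothesis that $q_1$ is an $r$th power mod $q_2$ gives $f\mid (q_2-1)/r$, while $r^2\nmid q_2-1$ gives $\gcd(r,f)=1$. I expect these combine to force $H\cap D(\mathfrak{P})=\{1\}$ for any prime $\mathfrak P$ of $\Q(\zeta_{q_1 q_2})$ above $q_1$, so the completions coincide: $\Q(\rho)_\p=\Q(\zeta_{q_1 q_2})_{\mathfrak{P}}$. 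Granting this, $1-\zeta_{q_1}$ is a uniformiser of $\Q(\rho)_\p$, and a short Galois-averaging inside $\Q(\zeta_{q_1q_2})$ gives the exact identity $N_{\Q(\rho)_\p/\Q_{q_1}}(1-\zeta_{q_1})=q_1^{f}$. This local identification is where all four number-theoretic hypotheses intervene, and is the main obstacle.

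Once that is in place, the rest is bookkeeping. Standard local CFT (the inertia of $\Q(\rho)_\p/\Q_{q_1}$ has order $q_1-1$, coprime to $q_1$) gives $N(\O_{\Q(\rho)_\p}^\times)\subseteq 1+q_1\Z_{q_1}$, which combined with the exact value $q_1^f$ above shows $N_{\Q(\rho)/\Q}(x)/q_1^{v_{q_1}(N(x))}\equiv 1 \pmod{q_1}$ for every $x\in\Q(\rho)^\times$. Writing $y=N(x)=q_1^a q_2^b m$ with $a,b$ the $q_i$-valuations and reducing mod $q_1$ gives $m\equiv q_2^{-b}\pmod{q_1}$; but $b$ is automatically a multiple of $f_2:=\ord_{q_1}(q_2)$ (the residue degree of $q_2$ in $\Q(\rho)$), and $q_2^{f_2}\equiv 1\pmod{q_1}$ by definition of order, so $m\equiv 1\pmod{q_1}$. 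The symmetric argument at $q_2$ — using the second half of the hypotheses to establish the analogous local identification — together with CRT gives $m\equiv 1\pmod{q_1 q_2}$.
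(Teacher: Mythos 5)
Your proposal is correct, and the representation-theoretic half (the choice $\rho=\Ind_{C_{q_1q_2}}^G\psi$ with $\psi$ faithful, the orbit decomposition of the permutation modules, and $m=1$) is exactly what the paper does. Where you diverge is in proving that norms of elements of $\Q(\rho)$ have the shape $q_1^aq_2^bm$ with $m\equiv 1\bmod q_1q_2$: the paper argues globally, observing that $\Q(\zeta_{q_1q_2})=\Q(\rho)(\zeta_{q_1})=\Q(\rho)(\zeta_{q_2})$ is everywhere unramified over $\Q(\rho)$ and applying Artin reciprocity, so that $\prod_\p\Frob_\p^{\ord_\p x}=\id$ in $\Gal(\Q(\zeta_{q_1q_2})/\Q(\rho))$; the congruence then falls out because $\Frob_\p=\zeta_{q_1q_2}\mapsto\zeta_{q_1q_2}^{N(\p)}$ for $\p\nmid q_1q_2$, while $\Frob_\p=\id$ for $\p\mid q_1q_2$. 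You instead compute local norm groups at $q_1$ and $q_2$ separately. The two arguments rest on the identical key input --- the complete splitting of the primes above $q_1$ and $q_2$ in $\Q(\zeta_{q_1q_2})/\Q(\rho)$, which is precisely where all four congruence hypotheses enter --- but the paper's global route handles both congruences in one stroke and avoids the explicit uniformiser/norm-group computations, whereas yours is more elementary (no Hilbert-class-field-style reciprocity) at the cost of more local bookkeeping. The step you flag as the main obstacle does go through: for $\mathfrak P\mid q_1$ the decomposition group in $(\Z/q_1)^\times\times(\Z/q_2)^\times$ is $(\Z/q_1)^\times\times\langle q_1\bmod q_2\rangle$, and since $q_1$ is an $r$-th power mod $q_2$ and $r^2\nmid q_2-1$, the subgroup $\langle q_1\rangle\subseteq(\Z/q_2)^\times$ lies in the $r$-th powers, a group of order prime to $r$, hence contains no element of order $r$; so it meets $\langle\zeta_2\rangle$ trivially and $H\cap D(\mathfrak P)=\{1\}$ as you predicted. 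One small quibble: the reason $N(\cO_{\Q(\rho)_\p}^\times)\subseteq 1+q_1\Z_{q_1}$ is not merely tameness of the inertia, but that the local field contains $\Q_{q_1}(\zeta_{q_1})$, whose unit norm group is exactly $1+q_1\Z_{q_1}$ (equivalently, inertia has the \emph{full} order $q_1-1$); with that correction the argument is complete.
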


\begin{proof}
Let $\psi$ be a faithful 1-dimensional representation of $C_{q_1q_2}$ and $\rho=\Ind_{C_{q_1q_2}}^G\psi$. This is a faithful $r$-dimensional irreducible representation of $G$, and $\Q(\rho)=\Q(\zeta_{q_1q_2})^{C_r}$ with the $C_r$ action coming from $C_r\subset \Aut(C_{q_1q_2})=(\Z/q_1q_2\Z)^\times=\Gal(\Q(\zeta_{q_1q_2})/\Q)$. 
Applying Theorem~\ref{thm:norm} to the identity
$$
 \bigoplus_{\fg\in\Gal(\Q(\rho)/\Q)} \rho^{\fg}\qquad = \qquad \Ind_{C_r}^G\triv\> \> \oplus\> \triv \ominus\> \Ind^G_{C_{q_1}\!\rtimes C_r}\triv \>\ominus\> \Ind^G_{C_{q_2}\!\rtimes C_r}\triv
$$
shows that either $\rk E/F>0$ or the ratio of the $\BSD$ terms in the statement is a norm of an element $x\in \Q(\rho)$.

It thus suffices to show that the norm of every non-zero principal ideal $(x)$ in $\Q(\rho)$ is of the form $q_1^a q_2^b m$, for some $a, b\in \Z$ and $m\in\Q$ with $m\equiv 1 \!\mod q_1q_2$. Observe that $\Q(\zeta_{q_1q_2})=\Q(\rho)(\zeta_{q_1})=\Q(\rho)(\zeta_{q_2})$. In particular, $\Q(\zeta_{q_1q_2})/\Q(\rho)$ is unramified at all primes, so by class field theory
$$ 
\prod_\p \Frob_\p^{\ord_\p x} = \text{id} \in \Gal(\Q(\zeta_{q_1q_2})/\Q(\rho)),
$$
the product taken over the primes of $\Q(\rho)$ (all the infinite places being complex as $r$ is odd). By hypothesis $q_1$ is an $r^{\text{th}}$ power in $\F_{q_2}$ and $r^2\nmid |\F_{q_2}^\times|$, so $\F_{q_2}(\zeta_{q_1})/\F_{q_2}$ has degree coprime to $r$; hence every prime above $q_2$ must split in $\Q(\rho)(\zeta_{q_1})/\Q(\rho)$, that is $\Frob_\p=$id for every $\p|q_2$. Similarly $\Frob_\p=$id for every $\p|q_1$, and hence
$
\prod_{\p| q_1, q_2} \Frob_\p^{\ord_\p x} = \text{id}.
$
For a prime $\p\nmid q_1,q_2$ the Frobenius element is simply $\zeta_{q_1q_2}\mapsto \zeta_{q_1q_2}^{N(\p)}$, which shows that
$$
  \prod_{\p\nmid q_1, q_2} N(\p)^{\ord_\p x} = 1 \in (\Z/q_1q_2\Z)^\times,
$$
and hence the norm of $x$ is of the required form.
\end{proof}

\begin{corollary} \label{cor:makeselmer}
Let $F/\Q$ be a Galois extension of degree $3q_1q_2$ that contains a Galois cubic field $K/\Q$ and $\ell$ a prime that satisfy
\begin{itemize}
\item $q_1,q_2 \equiv 4$ or $7 \bmod 9$, and $q_1$ is a cube modulo $q_2$ and vice versa,
\item $ \lfloor \frac{q_1q_2}{4}\rfloor- \lfloor \frac{q_1}{4}\rfloor- \lfloor \frac{q_2}{4}\rfloor \neq 0 \bmod 3,$ 
\item $\ell^3 \equiv 1 \bmod q_1$, but $\ell \neq 1 \bmod q_1$ and $\ell \neq 1 \bmod q_2$, 
\item $\ell$ has residue degree 3 and ramification degree $q_1q_2$ in $F/\Q$.
\end{itemize}
If Conjecture \ref{conj:main} holds, then every elliptic curve $E/\Q$ that has additive reduction at $\ell$ of Kodaira type III and good reduction at other primes that ramify in $F/\Q$, must have a non-trivial $p$-Selmer group for some prime $p\nmid [F\!:\!\Q]$.
\end{corollary}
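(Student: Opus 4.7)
I plan to argue by contradiction, assuming $\Sel_p(E/F)=0$ for every prime $p\nmid [F\!:\!\Q]=3q_1q_2$, and obtain a contradiction from the $\ell$-adic content of the identity in Theorem~\ref{thm:qqm}. Under this assumption, $\rk E/F=0$ and $|\sha_{E/\cdot}|$, $|E(\cdot)_{\tors}|$ are $\{3,q_1,q_2\}$-primary in every intermediate field (by the Selmer subfield restriction in Lemma~\ref{lem:basics}). Identifying $G=\Gal(F/\Q)=C_{q_1q_2}\rtimes C_3$ with $C_3$ acting faithfully on each cyclic factor of $C_{q_1q_2}=C_{q_1}\times C_{q_2}$ (possible because $q_i\equiv 1\pmod 3$), the hypotheses $q_i\equiv 4,7\pmod 9$ and the mutual-cubes condition are exactly those needed by Theorem~\ref{thm:qqm} at $r=3$; since $\rk E/F=0$, we obtain
\[
R:=\frac{\BSD(E/F^{C_3})\,\BSD(E/\Q)}{\BSD(E/F^{C_{q_1}\rtimes C_3})\,\BSD(E/F^{C_{q_2}\rtimes C_3})}=q_1^aq_2^b\,m,\qquad m\equiv 1\pmod{q_1q_2}.
\]
In particular the prime-to-$q_1$ part of $R$ lies in $\langle q_2\rangle\subset(\Z/q_1\Z)^\times$, which, since $q_2$ is a cube mod $q_1$, sits inside the subgroup of cubes (of index $3$, as $q_1\not\equiv 1\pmod 9$).

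I then compute the $\ell$-local data. The unique prime above $\ell$ in each of $\Q$, $F^{C_{q_2}\rtimes C_3}$, $F^{C_{q_1}\rtimes C_3}$, $F^{C_3}$ has residue degree $1$ and ramification index $e\in\{1,q_1,q_2,q_1q_2\}$ respectively. Each $e$ is odd, so Kodaira type III reduction at $\ell$ propagates to type III or III* at every level (since $3e\bmod 12\in\{3,9\}$); all Tamagawa numbers equal $2$ and cancel in $R$. The $|\omega/\omega_v^{\min}|_v$ factor contributes $\ell^{\lfloor e/4\rfloor}$ at each field, giving a total $\ell$-contribution of $\ell^n$ with $n=\lfloor q_1q_2/4\rfloor-\lfloor q_1/4\rfloor-\lfloor q_2/4\rfloor$. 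The conditions $\ell^3\equiv 1$ and $\ell\not\equiv 1\pmod{q_1}$ make $\ell$ have order exactly $3$ in $(\Z/q_1\Z)^\times$, and $q_1\not\equiv 1\pmod 9$ forces $\ell$ not to be a cube; hence $\ell^n$ has non-trivial image modulo cubes precisely when $3\nmid n$, which is the floor-function hypothesis.

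To close the argument, I would verify that all remaining contributions to $R$ are cubes in $(\Z/q_1\Z)^\times$. For primes $v\neq\ell$ of bad reduction of $E$, the hypothesis forces $v$ to be unramified in $F/\Q$, so the $|\omega/\omega_v^{\min}|_v$ factors are trivial and only Tamagawa numbers contribute. A case analysis on the Frobenius conjugacy class of $v$ in $G$ shows that the exponent in the Tamagawa ratio is $(q_1-1)(q_2-1)$ when $\Frob_v=1$, equals $(q_1-1)(q_2-1)/3$ when $\Frob_v$ has order $3$, and vanishes otherwise; each is divisible by $3$ using $3\mid q_i-1$, so the corresponding factor is a cube. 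The $\{3,q_1,q_2\}$-primary contributions from $\sha$ and torsion reduce to a power of $3$, which should be controllable via the squareness of $|\sha|$ together with the arithmetic of the chosen $F$ and $\ell$. Granted this, $R$ maps to the class of $\ell^n$ in $(\Z/q_1\Z)^\times/(\text{cubes})$, contradicting the requirement that it be trivial.

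The hardest step is precisely the last one: ruling out that the $3$-part of $|\sha_{E/\cdot}|$ and $|E(\cdot)_{\tors}|$---which survive the Selmer-triviality assumption---or awkward Tamagawa contributions from additive or non-split multiplicative reduction at primes $v\neq\ell$, could overwhelm the $\ell^n$-obstruction in $(\Z/q_1\Z)^\times/(\text{cubes})$. This is where I expect the unspecified details of the ``explicit'' $F$ and $\ell$ to do real work, via additional conditions forcing these residual factors into $\langle q_1,q_2\rangle$.
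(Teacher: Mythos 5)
Your proposal follows the paper's proof essentially step for step: the same contradiction hypothesis, the same application of Theorem~\ref{thm:qqm} with $r=3$ (including the identification $\Gal(F/\Q)\simeq C_{q_1q_2}\rtimes C_3$ with $C_3$ acting non-trivially on both factors, which the paper deduces from $\ell\not\equiv 1\bmod q_i$), the same computation that the totally ramified primes above $\ell$ give cancelling Tamagawa factors of $2$ and an $|\omega/\omega^{\min}|$ contribution of $\ell^{n}$ with $n=\lfloor q_1q_2/4\rfloor-\lfloor q_1/4\rfloor-\lfloor q_2/4\rfloor\not\equiv 0\bmod 3$, the same cube-counting at the unramified bad primes, and the same final obstruction in $\F_{q_1}^\times$ modulo cubes using that $\ell$ has order $3$ and $q_1\not\equiv 1\bmod 9$. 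One small caveat at the unramified bad primes $v\neq\ell$: your exponent count tacitly assumes that all Tamagawa numbers above $v$ coincide, which is correct for multiplicative reduction (no ramification and all residue degrees odd) but not automatic for additive reduction; the paper's phrasing --- each completion of $\Q_v$ occurs a multiple of three times in the ratio --- covers both cases and is the cleaner way to state what you computed.

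On the step you flag as hardest: the paper dispatches it in one sentence. It notes that $\Sel_p(E/F)=0$ for $p\nmid[F\!:\!\Q]$ forces $\Sel_p(E/L)=0$ for every subfield $L$ by Lemma~\ref{lem:basics}(\ref{lem:basics-selmer}), so torsion and $\sha$ contribute nothing to the $p$-part of the $\BSD$ quotient for such $p$ and all regulators are $1$, and it then applies Theorem~\ref{thm:qqm} directly to the ratio of $C$-terms; no additional conditions on the ``explicit'' $F$ and $\ell$ are invoked. Your concern about the residual $3$-primary contributions of $\sha$ and torsion is therefore not something the paper resolves differently --- it is passed over. As you observe, these contributions enter the $\BSD$ quotient as a power $3^{c}$ (the $q_1$- and $q_2$-parts are absorbed into $q_1^aq_2^b$), and the mod-$q_1$-cubes contradiction strictly needs $3^{c}$ to be a cube in $\F_{q_1}^\times$; squareness of $|\sha|$ only gives $c$ even, which is not enough. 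So your reconstruction is faithful to the paper's argument in every step it actually carries out, and the point at which you stop is a genuine lacuna of the written proof rather than a missing idea on your part; to close it one would need either an extra hypothesis (e.g.\ that $3$ is a cube modulo $q_1$) or an argument that $3\mid c$.
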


\begin{proof}
First note that $\Gal(F/\Q)\simeq C_{q_1q_2}\!\rtimes\! C_3$ (the inertia group at $\ell$ is tame, so $C_{q_1q_2}$ is a subgroup, and the extension is split by the Schur-Zassenhaus theorem). As $\ell\neq 1\bmod q_1$, there is no Galois extension of $\Q$ of degree $q_1$ that is ramified at $\ell$, and similarly for $q_2$. It follows that $C_3$ must act non-trivially both on $C_{q_1}$ and $C_{q_2}$. Moreover, $q_1$ and $q_2$ are both cubes modulo each other and are $\equiv 4$ or $7 \bmod 9$, so Theorem \ref{thm:qqm} applies with $r=3$. 

If the Selmer group $\Sel_p(E/F)$ is trivial for a prime $p\nmid [F\!:\!\Q]$, then it is also trivial over any subfield of $F$
(see Lemma \ref{lem:basics}(\ref{lem:basics-selmer})), and hence neither the torsion nor $\sha$ contribute to the $p$-part of the $\BSD$ quotient in the theorem. The rank over $F$ is then also 0, so all the regulators are trivial. Thus by Theorem \ref{thm:qqm}, supposing that  $\Sel_p(E/F)=0$ for all $p\nmid [F\!:\!\Q]$,
$$
\frac{C_{E/F^{C_r}}\cdot C_{E/\Q}}{C_{E/F^{C_{q_1}\rtimes C_r}}\cdot C_{E/F^{C_{q_2}\rtimes C_r}}} = q_1^a q_2^b m
$$
for some $a,b\in\Z$ and $m\equiv 1 \mod q_1q_2$. 
However, in our setup this is not the case, as we now explain. 

First observe that all primes of good reduction, which include all ramified primes in $F/\Q$ apart from $\ell$, have trivial Tamagawa numbers and $|\omega/\omega^{\min}|$ terms in all extensions, and hence do not contribute to the ratio of the $C$-terms above.
If $q\neq\ell$ is a prime of bad reduction for $E/\Q$, then, by assumption, it is unramified in $F/\Q$. The minimal differential at $q$ then remains minimal in all extensions, so that the $|\omega/\omega^{\min}|$ terms for these primes are all 1. Moreover, the decomposition group at $q$ is either trivial or cyclic of order 3, $q_1, q_2$ or $q_1q_2$, and a straightforward case-by-case check shows that the Tamagawa numbers from the primes above $q$ contribute a perfect cube to the above ratio of $C$ terms (in fact each extension of $\Q_q$ always appears in the expression a multiple of 3 times).

Finally consider the primes above $\ell$. As $\ell$ has ramification degree $q_1q_2$ in $F/\Q$, it is totally ramified in $F^{C_r}, F^{C_{q_1}\rtimes C_r}$ and $F^{C_{q_2}\rtimes C_r}$. Thus $E$ has reduction type III or III$^*$ at the prime above $\ell$ in these fields, and the corresponding Tamagawa number is always 2 (see \cite{Silverman2}~\S~IV.9). The minimal model over $\Q_\ell$ does not remain minimal (valuation of the discriminant goes above 12, Lemma \ref{lem:basics}(\ref{lem:basics-pgomega})) and the $|\omega/\omega^{\min}|$ terms contribute
$
 \ell^{\lfloor \frac{3\cdot q_1q_2}{12} \rfloor}/\ell^{\lfloor \frac{3\cdot q_1}{12} \rfloor}\ell^{\lfloor \frac{3\cdot q_2}{12} \rfloor} = \ell^n
 $,
where $n\neq 0\bmod 3$ by assumption.

Putting these computations together shows that 
$$
\frac{C_{E/F^{C_r}}\cdot C_{E/\Q}}{C_{E/F^{C_{q_1}\rtimes C_r}}\cdot C_{E/F^{C_{q_2}\rtimes C_r}}} = x^3 \ell^n
$$
for some $x\in\Q$ and integer $n\neq 0 \bmod 3$. However, $\ell$ has order 3 in $\F_{q_1}^\times$ and so, as $q_1\neq 1\bmod 9$, it is not a cube in $\F_{q_1}$. As $q_2$ is a cube mod $q_1$, this expression cannot be of the form $q_1^a q_2^b m$ for any $m\equiv 1 \bmod q_1q_2$, which gives the desired contradiction.
\end{proof}

\begin{remark}\label{rmk:makeselmerexist}
Number fields satisfying the hypotheses of Corollary \ref{cor:makeselmer} do exist. For example, we can take $q_1=643$ and $q_2=43$. For simplicity, let us take $K=\Q(\zeta_9)^+$ (class number 1) and then choose $\ell$ and $F$ using class field theory as follows. First pick five candidate primes $\ell_1, \ldots \ell_5$ that satisfy the 3rd and 4th bullet points of the corollary --- these are congruence conditions, so such primes exist. As $\ell_i$ has residue degree 3 in $K$ and $\ell_i^3\equiv 1 \bmod q_1$, the group $(\cO_K/\prod \ell_i)^\times$ has a  $C_{q_1}^{\times5}$ quotient. The unit group of $K$ has rank 2, so the ray class group $((\cO_K/\prod \ell_i)^\times/$image of $\cO_K^\times)$ has a $\Gal(K/\Q)$-stable $C_{q_1}^{\times n}$ quotient for some $n\ge 3$. In particular, as $q_1\equiv 1 \bmod 3$ and $\F_{q_1}$ contains the 3rd roots of unity, there are at least three $\Gal(K/\Q)$-stable $C_{q_1}$-quotients whose corresponding fields $F_1, F_2, F_3$ under global class field theory are linearly disjoint. By construction, the $F_i$ are Galois over $\Q$, have degree $q_1$ over $K$ and only the $\ell_i$ can ramify in $F_i/K$. As $K$ has class number 1 and the $F_i$ are linearly disjoint, at least three of the $\ell_i$ must ramify in some of the fields. Now repeating the same construction with the same $\ell_i$ for $q_2$ similarly yields three fields $F_1', F_2', F_3'$ of degree $q_2$ over $K$. One of the $\ell_i$ must ramify both in one of the $F_i$ and in one of the $F_i'$, say $\ell_1$ ramifies in $F_1$ and in $F_2$. We can the take $F=F_1 F_2$ and $\ell=\ell_1$.
\end{remark}


\subsection{Forcing points of infinite order}
\label{ss:infiniteorder}

For our final type of application of Theorem \ref{thm:norm}, we will make the local data force the existence of points of infinite order on elliptic curves. This time, the idea is to make sure that the ratio of $\BSD$-terms in the theorem cannot be the norm of an element at all, and hence $E/F$ must have positive rank. In order to do this, we need a way of controlling the Tate--Shafarevich group. In general, this is very difficult, so we will simply make use of the fact that it has square order and that all squares are norms from quadratic fields.

\begin{theorem}
\label{thmquadrank}
Suppose Conjecture \ref{conj:main} holds. 
Let $E/\Q$ be an elliptic curve, $F/\Q$ a Galois extension with Galois group $G$, $\rho$ an irreducible representation of $G$ and
$$
  \qquad \Big(\bigoplus_{\fg\in\Gal(\Q(\rho)/\Q)} \!\!\!\rho^{\fg}\Big)^{\oplus m} 
  = \bigl(\bigoplus_i \Ind_{F_i/\Q} \triv\bigr) \ominus \bigl(\bigoplus_j \Ind_{F_j'/\Q} \triv\bigr)
$$
for some $m\in\Z$ and subfields $F_i, F_j'\subseteq F$.
If either
$
\frac{\prod\nolimits_i C_{E/F_i}}{\prod\nolimits_j C_{E/F_j'}}
$
is not a norm from some quadratic subfield $\Q(\sqrt{D})\subset \Q(\rho)$,
or if it is not a rational square when $m$ is even, 
then $E$ has a point of infinite order over $F$.
\end{theorem}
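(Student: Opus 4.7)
The plan is to deduce Theorem~\ref{thmquadrank} from Theorem~\ref{thm:norm} by a contrapositive argument. I assume $E(F)$ contains no point of infinite order and aim to show that in this case the ratio $R := \prod_i C_{E/F_i} / \prod_j C_{E/F_j'}$ is a norm from every quadratic subfield $\Q(\sqrt{D}) \subset \Q(\rho)$, and is additionally a rational square whenever $m$ is even.

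First I would observe that the assumption $\rk E/F = 0$ forces $E(F)\otimes_\Z\C = 0$, so in particular $\langle\rho, E(F)_\C\rangle = 0$. Theorem~\ref{thm:norm} then produces $x\in\Q(\rho)$ with
$$\frac{\prod_i \BSD(E/F_i)}{\prod_j \BSD(E/F_j')} \;=\; N_{\Q(\rho)/\Q}(x)^m.$$
Next I would simplify the left-hand side. For every $F_i \subseteq F$ the rank of $E/F_i$ is zero, so $\Reg_{E/F_i}=1$; the torsion contribution $|E(F_i)_{\tors}|^2$ is visibly a rational square; and $\sha_{E/F_i}$ is finite by Conjecture~\ref{conj:main}(C1) and has square order by the Cassels--Tate pairing. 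Collecting these square factors (and their analogues for the $F_j'$) shows that
$$R \;=\; N_{\Q(\rho)/\Q}(x)^m \cdot s \qquad \text{for some } s \in \Q^{\times 2}.$$

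To conclude the first assertion, for a quadratic subfield $\Q(\sqrt{D})\subset\Q(\rho)$ I would set $y := N_{\Q(\rho)/\Q(\sqrt{D})}(x)$ and use transitivity of the norm to write $N_{\Q(\rho)/\Q}(x)^m = N_{\Q(\sqrt{D})/\Q}(y^m)$; since any rational square $t^2$ equals $N_{\Q(\sqrt{D})/\Q}(t)$ for $t\in\Q$, the product $R$ is itself a norm from $\Q(\sqrt{D})$, contradicting the first hypothesis. For the second assertion, when $m$ is even the factor $N_{\Q(\rho)/\Q}(x)^m = (N_{\Q(\rho)/\Q}(x^{m/2}))^2$ is already a rational square, so $R \in \Q^{\times 2}$, contradicting the second hypothesis. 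I do not anticipate a serious obstacle: the argument is a bookkeeping exercise around Theorem~\ref{thm:norm}, relying on the standard facts that regulators are trivial in rank zero and that $|\sha|$ is a rational square when finite. The only point of care is tracking exactly which factors are rational squares so that $R$ and $N_{\Q(\rho)/\Q}(x)^m$ are compared modulo $\Q^{\times 2}$.
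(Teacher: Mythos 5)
Your proposal is correct and follows essentially the same route as the paper's proof: assume rank zero over $F$, apply Theorem~\ref{thm:norm}, note the regulators are trivial and the torsion and $\sha$ contributions are rational squares (hence norms from any quadratic field), and conclude that the Tamagawa ratio itself must be a norm from $\Q(\sqrt{D})$, and a rational square when $m$ is even. The only difference is that you spell out the norm-transitivity bookkeeping explicitly, which the paper leaves implicit.
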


\begin{proof}
Suppose $\rk E/F=0$. By Theorem \ref{thm:norm},
$
\frac{\prod \BSD(E/F_i)}{\prod\BSD(E/F_j')}
$
is the $m$-th power of the norm of an element of $\Q(\rho)$. In particular it is a norm
from $\Q(\sqrt{D})$, and if $m$ is even it is a rational square.

As the rank is zero over $F$, the regulators that enter the $\BSD$-terms are all 1. The contributions from $\sha$ and torsion are all squares, and hence automatically norms from~$\Q(\sqrt{D})$. It follows that the remaining expression
$\frac{\prod\nolimits_i C_{E/F_i}}{\prod\nolimits_j C_{E/F_j'}}$
must be a norm from $\Q(\sqrt{D})$ as well, and a rational square in case $m$ is even.
\end{proof}

The criterion of Theorem \ref{thmquadrank} can be applied in many Galois groups to find local conditions on elliptic curves that guarantee the existence of points of infinite order. We illustrate it on the group of quaternions, $Q_8$:

\begin{corollary}
Suppose Conjecture \ref{conj:main} holds. 
Let $F/\Q$ be a Galois extension with Galois group $Q_8$. Then every elliptic curve $E/\Q$ with good reduction at 2 and 3 and with an odd number of potentially multiplicative primes that do not split in $F/\Q$ must have a point of infinite order over $F$.
\end{corollary}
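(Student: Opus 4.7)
The approach is to apply Theorem \ref{thmquadrank} to the unique faithful irreducible representation $\rho$ of $Q_8$. Its character takes only the values $\{2,-2,0\}$ on the conjugacy classes, so $\Q(\rho)=\Q$ and the orbit sum $\bigoplus_\fg \rho^\fg$ is just $\rho$ itself. A direct character comparison (regular representation versus permutation representation on $Q_8/Z$) verifies the virtual identity
$$
  2\rho \;=\; \Ind_{F/\Q}\triv \,\ominus\, \Ind_{F^Z/\Q}\triv,
$$
where $Z$ denotes the center of $Q_8$ and $F^Z$ its fixed biquadratic subfield. This is a relation of the shape required by Theorem \ref{thmquadrank} with $m=2$. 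Since $m$ is even, the theorem reduces the corollary to showing that the ratio
$$
R \;:=\; \frac{C_{E/F}}{C_{E/F^Z}}
$$
is not a rational square under the hypotheses; equivalently, that $v_2(R)$ is odd.

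Write $R = \prod_p R_p$ as a product of local contributions. For each rational prime $p$, let $D_p\subseteq Q_8$ denote the decomposition subgroup above $p$; this is well-defined up to conjugation, and in fact uniquely so among subgroups containing $Z$ (all such subgroups are normal in $Q_8$). Primes of good reduction give $R_p=1$. If $D_p\in\{\{1\},Z,C_4\}$, the number $n_p$ of primes of $F^Z$ above $p$ is $4,4,2$ respectively, and since all primes of $F$ (resp.\ $F^Z$) above $p$ have isomorphic completions, the local ratio is $\alpha_p^{n_p}$ where $\alpha_p=T(E/F_w)/T(E/F^Z_v)$ (with $T(E/L)=c(E/L)|\omega/\omega_L^{\min}|_L$); this is automatically a rational square as $n_p$ is even. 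The only case in which $R_p$ can fail to be a square is $D_p=Q_8$: here $n_p=1$, so $p$ has a unique prime above it in $F$ --- which is precisely the condition ``$p$ does not split in $F/\Q$''. Under the hypothesis of good reduction at $2$ and $3$, such potentially multiplicative primes satisfy $p\geq 5$, so the local extensions are tame: $F^Z_v/\Q_p$ is biquadratic with ramification $2$ and residue degree $2$, and $F_w/F^Z_v$ is a ramified quadratic extension.

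A direct calculation of Tamagawa numbers and Néron differentials through this specific ramified quadratic extension yields $v_2(R_p)=1$ at each potentially multiplicative prime with $D_p=Q_8$: for split multiplicative reduction of $E$ at $\Q_p$ with $\mathrm{ord}_p(\Delta^{\min})=n$, the values $c_v = 2n$ and $c_w = 4n$ give $\alpha_p=2$; for non-split multiplicative reduction the doubling of residue degree produces split multiplicative reduction over $F^Z_v$ and $F_w$ with the same factor of $2$; for additive potentially multiplicative reduction a parallel but longer bookkeeping (splitting on whether the twist character realising multiplicativity is contained in $F^Z_v$ or only in $F_w$) yields the same parity. A separate case analysis along the Kodaira types $II, III, IV, I_0^*, IV^*, III^*, II^*$ shows that additive potentially good reduction always contributes a rational square to $R$, regardless of $D_p$. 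Putting these together,
$$
  v_2(R) \;\equiv\; \#\{p : E \text{ potentially mult. at } p,\; D_p=Q_8\} \pmod{2},
$$
which is odd by hypothesis, so $R$ is not a rational square and Theorem \ref{thmquadrank} supplies a point of infinite order on $E/F$. The main obstacle is the detailed case analysis pinning down $v_2(R_p)=1$ for $D_p=Q_8$ in the additive potentially multiplicative case, and verifying that additive potentially good reduction never disturbs the parity --- both of which amount to careful, but routine, local Tate's-algorithm computations over a fixed tame quadratic extension.
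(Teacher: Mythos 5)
Your proposal is correct and follows essentially the same route as the paper: the identity $\rho^{\oplus 2}=\Ind_1^{Q_8}\triv\ominus\Ind_{Z}^{Q_8}\triv$, Theorem \ref{thmquadrank} with $m=2$ even, the observation that primes with proper decomposition group contribute squares, and the local computation showing each non-split potentially multiplicative prime contributes exactly one factor of $2$ while potentially good primes contribute squares. The two local computations you defer do check out as in the paper (via Lemma \ref{lem:basics}); in particular, in the additive potentially multiplicative case the twist character is always killed already over $F^Z_v$, since that biquadratic field contains all three quadratic extensions of $\Q_p$, so the reduction there is split multiplicative and $c_w=2c_v$.
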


\begin{proof}
Let $\rho$ be the 2-dimensional irreducible representation of $Q_8$, so that 
$$
  \rho^{\oplus 2}=\Ind_{1}^{Q_8}\triv \ominus \Ind_{C_2}^{Q_8}\triv.
$$
We will show that $\frac{C_{E/F}}{C_{E/L}}$ has odd 2-adic valuation, where $L=F^{C_2}$. The result then follows from the theorem.

Observe that if a prime $p$ splits in $F/\Q$, then it necessarily already splits in $L/\Q$. Indeed, if there is only one prime above $p$ in $L$, then the decomposition group at $p$ surjects onto $Q_8/C_2$. The only subgroup with this property is the whole of $Q_8$, so there is only one prime above $p$ in $F$. It follows that split primes contribute square contributions to $\frac{C_{E/F}}{C_{E/L}}$.

As $E$ has good reduction at $2$, these primes do not contribute to the ratio: $\omega$ remains minimal in all field extensions of $\Q_2$ and the local Tamagawa number is always 1 (Lemma~\ref{lem:basics}). At primes $v\nmid2$, the contribution from $\ord_2 |\omega/\omega^{\min}|_v$ will clearly be zero. Thus
$$
\ord_2 \frac{C_{E/F}}{C_{E/L}} = \sum_{p\in B} \ord_2(c_w/c_v),
$$
where $B$ is the set of primes of bad reduction of $E$ that do not split in $F/\Q$, and where $v$ and $w$ are the primes above $p$ in $L$ and $F$, respectively. 

If $p\in B$ then $p$ necessarily has residue degree 2 and ramification degree 4 in $F/\Q$ and the prime above it ramifies in $F/L$, as the only possible choice for the (tame!)\ inertia subgroup and its cyclic quotient is $Q_8/C_4=C_2$. In particular, if $p$ is a prime of potentially multiplicative reduction then $E$ has split multiplicative reduction at $v$ in $L$ and $c_w=2 c_v$ (Lemma \ref{lem:basics}).
If $p$ is a prime of potentially good reduction then the $p$-adic valuation $\delta$ of the minimal discriminant of $E/\Q_p$ determines the Kodaira type of $E$ at $v$ and at $w$. Recall that the Tamagawa number of $E$ over a local field $M$ is the number of Frobenius invariant points of $E(M^{nr})/E_0(M^{nr})$, so we read off from \cite{Silverman2} Chapter 9 Table 4.1  that the pair of Tamagawa numbers $c_v, c_w$ is either $3,3$ or $3,1$ ($\delta=2,10$), $1,1$ or $4,1$ ($\delta=3,9$, noting that $L_v$ is a quadratic unramified extension of a field, so Frobenius has odd order on $E/E_0$ over $L_v$), 2,2 $(\delta=4,8)$, or 1,1 $(\delta=6)$.
Thus in all cases of potentially good reduction $\ord_2 c_w/c_v$ is even. The result follows.
\end{proof}

As a final application, we will prove a result on the Birch--Swinnerton-Dyer conjecture in dihedral extensions. This time we will classify the cases when our local $\BSD$-term data predicts that $\rho$ appears in $E(F)_\C$ and compare it to the corresponding root number predictions.

\begin{theorem}\label{thm:parityconj}
Suppose Conjecture \ref{conj:main} holds. 
Let $F/\Q$ be a Galois extension with Galois group $D_{2pq}$, with $p,q\equiv 3\! \mod 4$ primes, and let $\rho$ be a faithful irreducible Artin representation that factors through $F/\Q$. Then for every semistable elliptic curve $E/\Q$,
$$
 \text{if } \ord_{s=1} L(E,\rho,s) \text{ is odd, then } \langle\rho,E(F)_\C\rangle>0.
$$
\end{theorem}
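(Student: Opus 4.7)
The plan is to derive the parity of $r := \langle \rho, E(F)_\C \rangle$ directly from property (C3) of Conjecture \ref{conj:main}, exploiting the self-duality of $\rho$, and then to match this with the analytic parity coming from the functional equation.

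First I would verify that $\rho$ is self-dual. Since $D_{2pq}$ has the odd cyclic subgroup $C_{pq}$ of index~2, every faithful irreducible 2-dimensional representation is of the form $\rho = \Ind_{C_{pq}}^{D_{2pq}} \chi$ for a primitive character $\chi$ of order $pq$. A reflection in $D_{2pq}$ conjugates $C_{pq}$ by inversion, hence $(\Ind\chi)^\ast = \Ind\chi^{-1} \cong \Ind\chi = \rho$. As a self-dual Artin representation, $\rho$ has real character, so the Artin root number $w_\rho$ is real, and since $|w_\rho|=1$ we get $w_\rho^2 = 1$.

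Next I would apply (C3) to $\rho$: using $\rho \cong \rho^\ast$ and $w_\rho^2 = 1$, the conjecture yields
\[
\BSD(E,\rho) = \BSD(E,\rho)\cdot(-1)^r w_{E,\rho},
\]
and since $\BSD(E,\rho)\in\C^\times$ this collapses to the purely algebraic parity identity $(-1)^r = w_{E,\rho}$. On the analytic side, self-duality combined with the functional equation for $L(E,\rho,s)$ gives $\Lambda(E,\rho,s) = w_{E,\rho}\,\Lambda(E,\rho,2-s)$, so $\ord_{s=1} L(E,\rho,s)$ is odd if and only if $w_{E,\rho} = -1$. Combining the two, if the analytic order is odd then $(-1)^r = -1$, so $r$ is odd and in particular strictly positive, which is the desired conclusion.

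The main technical input I anticipate needing is the analytic one: securing that $L(E,\rho,s)$ admits the relevant functional equation unambiguously, so that the parity of $\ord_{s=1}L(E,\rho,s)$ equals $(1-w_{E,\rho})/2 \bmod 2$. The hypotheses that $E$ is semistable and that $p,q\equiv 3\bmod 4$ plausibly enter at this step---through modularity of $E/\Q$, base change to the quadratic subfield $L = F^{C_{pq}}$, and known analytic properties of $L(E/L,\chi,s)$ for a ring class character $\chi$ of $L$---rather than in the algebraic deduction from (C3), which is essentially a one-line computation and requires no restriction on $p,q$ or on the reduction type of $E$.
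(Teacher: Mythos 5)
Your argument is formally valid under the literal hypothesis ``Conjecture \ref{conj:main} holds'', but it takes a completely different --- and essentially trivialising --- route from the paper's. Your verification that $\rho$ is self-dual and that $w_\rho^2=1$ is fine, and then (C3) collapses to $(-1)^r=w_{E,\rho}$ with $r=\langle\rho,E(F)_\C\rangle$; that is, for self-dual $\rho$, property (C3) \emph{is} the parity conjecture for the twist. Combined with the functional equation (which, as the paper notes, holds unconditionally here via Rankin--Selberg or cyclic base change from the quadratic subfield), the theorem follows in two lines. The paper deliberately avoids this: its proof invokes only (C1), (C2), (C5) through Theorem \ref{thm:norm}, expressing $\bigoplus_{\fg}\rho^{\fg}$ as $\Ind^G_{C_2}\triv\ominus\Ind^G_{D_{2p}}\triv\ominus\Ind^G_{D_{2q}}\triv\oplus\triv$ and showing that if $\langle\rho,E(F)_\C\rangle=0$ then a certain ratio of Tamagawa and regulator terms must be a norm from $\Q(\sqrt{pq})$. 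A long local computation (Claims 1 and 2 of the paper's proof) identifies that ratio as $(pq)^{\#X}$ times a rational square, where $\#X\equiv\ord_{s=1}L(E,\rho,s)\bmod 2$ by the explicit root-number formula for semistable curves, while $pq$ is not a norm from $\Q(\sqrt{pq})$ precisely because $p\equiv 3\bmod 4$.

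This comparison explains the loose end you flagged: the hypotheses $p,q\equiv 3\bmod 4$ and semistability do \emph{not} enter through the analytic continuation or functional equation (those hold for all dihedral twists); they are essential to the paper's norm-obstruction argument and to its use of the root-number and Tamagawa-number formulas. What the paper's approach buys is the actual content advertised in the introduction --- that the prediction coming from BSD-quotients over subfields ``magically'' matches the local root numbers, something that (as the authors stress) does not follow from known cases of the parity conjecture. What your approach buys is brevity, at the cost of assuming, via (C3), essentially the statement one is trying to corroborate; if one is only permitted the parts of Conjecture \ref{conj:main} used in Theorem \ref{thm:norm} (C1, C2, C5), your proof does not apply and the paper's computation is genuinely needed.
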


\begin{proof}
We first remark that this $L$-function does have an analytic continuation to $\C$ and satisfies the standard functional equation. 
(It can be expressed as a classical Rankin--Selberg product. Alternatively,  $\rho$ is induced from a 1-dimensional representation $\psi$ of $\Gal(F/K)$, where $K$ is the quadratic subfield of $F$, and so $L(E,\rho,s)=L(E/K,\psi,s)=L(\pi_{E/K}\otimes\psi,s)$, where $\pi_{E/K}$ is the automorphic form obtained by cyclic base change from the modular form attached to $E/\Q$ by modularity.)

We apply Theorem \ref{thm:norm} to the identity
$$
 \bigoplus_{\fg\in\Gal(\Q(\rho)/\Q)}\!\! \rho^{\fg}\>\>\>\>= \>\>\>\>\Ind^G_{C_2}\triv \>\ominus\> \Ind^G_{D_{2p}}\triv \> \ominus \>\Ind^G_{D_{2q}}\triv\> \oplus\> \triv,
$$
where $G=D_{2pq}$. Here $\Q(\rho)=\Q(\zeta_{pq})^+$ contains the quadratic field $\Q(\sqrt{pq})$. Since squares are always norms from quadratic fields we deduce that either $\langle\rho, E(F)_\C \rangle>0$ or
$$
 \frac{C_{E/L_{pq}}C_{E/\Q}}{C_{E/L_p} C_{E/L_q}}\cdot \frac{\Reg_{E/L_{pq}}\Reg_{E/\Q}}{\Reg_{E/L_{p}}\Reg_{E/L_{q}}} = N_{\Q(\sqrt{pq})/\Q}(x) \quad \text{for some } x\in\Q(\sqrt{pq}),
$$
where $L_{pq}=F^{C_2}$, $L_p=F^{D_{2q}}$ and  $L_q=F^{D_{2p}}$ are the intermediate fields of degree $pq, p$ and $q$ over $\Q$, respectively.

Write $M$ for the set of primes of multiplicative reduction of $E/\Q$, $s_r=1$ if the reduction at a prime $r\in M$ is split and $s_r=-1$ if it is non-split, and write $e_r$ and $f_r$ for the ramification and residue degree of a prime $r$ in $F/\Q$, respectively. Set
$$
X=\{v|\infty \text{ in } K\} \cup \{r\in M, e_r=1, f_r=2\}\cup \{r\in M, e_r=2, s_r=-1\}.
$$

\noindent {\bf Claim 1:}
If $\langle\rho, E(F)_\C \rangle=0$ then
$\frac{C_{E/L_{pq}}C_{E/\Q}}{C_{E/L_p}C_{E/L_q}}\cdot \frac{\Reg_{E/L_{pq}}\Reg_{E/\Q}}{\Reg_{E/L_{p}}\Reg_{E/L_{q}}} = (pq)^{\#X}\cdot\square$, where ``$\square$'' is shorthand for a rational square.

\noindent {\bf Claim 2:}
$\ord_{s=1}L(E,\rho,s) \equiv \#X \bmod 2$.

Observe that $pq$ is not the norm of any element of $\Q(\sqrt{pq})$. 
Indeed, as $p\equiv 3\bmod 4$, the norm equation $pq=a^2-pqb^2$ is not even soluble in $\Q_p$. 
It thus follows from the two claims and the formula above that $\langle\rho, E(F)_\C \rangle>0$, which proves the theorem.

{\em Proof of Claim 2:}
The parity of the order of vanishing of the $L$-function is given by the root number $w_{E,\rho}$.
As $E/\Q$ is semistable and $\dim\rho=2$, by \cite{Dokchitser2005} Thm.\ 1,
$$
 w_{E,\rho} = (-1)^{\dim\rho^-} \prod_{r\in M} s_r^{\dim\rho^{I_r}}\det(\Frob_r|\rho^{I_r}),
$$
where 
$\dim\rho^{-}$ is 1 or 2 according to whether $K$ is complex or real, $\Frob_r$ is any choice of Frobenius element at $r$ in $\Gal(F/\Q)$, and $\rho^{I_r}$ is the subspace of $\rho$ that is pointwise fixed by the inertia subgroup $I_r$.
If a prime $r\in M$ is unramified in $F/\Q$ then its contribution to the product is $-1$ if and only if $\Frob_r$ has order 2.
If it ramifies, then the contribution is $-1$ if and only if $I_r$ has order 2 (in $D_{2pq}$, $|I_r|=2$ forces $\Frob_r$ to be trivial) and $s_r=-1$. In other words $w_{E,\rho}=(-1)^{\#X}$.

{\em Proof of Claim 1:}
Since $E/\Q$ is semistable, its global minimal differential remains minimal in all field extensions, so we can write $C_{E/L}=\prod_{r\in M} C_{v|r}(L)$ with $C_{v|r}(L)=\prod_{v|r}c_v(E/L)$.

The group $D_{2pq}$ has five rational irreducible representations: trivial $\triv$, sign $\epsilon$, $\tau_p$ that factors through the $D_{2p}$-quotient and similarly $\tau_{q}$ and $\tau_{pq}$. 
Now pick points $P_1,\ldots, P_a\in E(\Q)$ that form a basis for $E(\Q)\otimes_\Z\Q$, the $\triv$-isotypical component of $E(F)\otimes\Q$. Complete it to a basis $P_1,\ldots, P_a, Q_1,\ldots Q_b\in E(L_p)$ for $E(L_p)\otimes\Q$, with the $Q_i$ belonging to the $\tau_p$-isotypical component of $E(F)\otimes\Q$; and similarly to $P_1,\ldots, P_a, R_1,\ldots R_b$ for $E(L_q)\otimes\Q$ with the $R_i$ belonging to the $\tau_q$-isotypical component.
By assumption, $\tau_{pq}$ does not appear in $E(F)\otimes\Q$, so that the $P_i, Q_i$ and $R_i$ together form a basis for $E(L_{pq})\otimes\Q$. Moreover, as the height pairing on $E(F)$ is Galois invariant, the spaces spanned by the $P_i$, the $Q_i$ and the $R_i$ are orthogonal to each other. Finally, recall that the height pairing scales under field extensions by the degree, so that the ratio of the regulators is
$$
  \frac{\Reg_{E/L_{pq}}\Reg_{E/\Q}}{\Reg_{E/L_{p}}\Reg_{E/L_{q}}} = \square \cdot q^{\rk E/L_p - \rk_{E/\Q}}p^{\rk E/L_q - \rk_{E/\Q}},
$$
the square error coming from the fact that our bases span finite index sublattices of $E(\Q)$, $E(L_p)$, $E(L_q)$ and $E(L_{pq})$ (see Lemma \ref{lem:basics}).

As the $p$- and $q$-primary parts of $\sha_{E/F}$ are finite (which is therefore also true over all the subfields), the
known cases of the parity conjecture for $E/\Q$, $E/L_p$ and $E/L_q$ (\cite{Dokchitsers2009} Thm.~1.3),
tell us that the parity of each exponent in the above formula is determined by the corresponding root number. Thus, like for the $C$ terms, we can express this as a product
$$
  q^{\rk E/L_p - \rk E/\Q} = \square \cdot \prod_{r\in M\cup\{\infty\}} q^{\theta_r^{(q)}},
$$
where $\theta_r^{(q)}$ is 0 or 1 depending on whether $w(E/\Q_r)\prod_{v|r}w(E/(L_{p})_v)$ is 1 or $-1$; and similarly for the exponent of $p$. 
Hence
$$
\frac{C_{E/L_{pq}}C_{E/\Q}}{C_{E/L_p}C_{E/L_q}}\cdot \frac{\Reg_{E/L_{pq}}\Reg_{E/\Q}}{\Reg_{E/L_{p}}\Reg_{E/L_{q}}} = \square\cdot  \prod_{r\in M\cup\{\infty\}} Z_r,
$$
where $Z_r=C_{v|r}(\Q)C_{v|r}(L_p)C_{v|r}(L_q)C_{v|r}(L_{pq}) q^{\theta_r^{q}}p^{\theta_r^{p}}$. Thus it now suffices to check that $Z_r=pq\square$ for $r\in X$ and $Z_r=\square$ for $r\notin X$. 

To explicitly determine $Z_r$, we systematically work through all possibilities. Recall that the local root number $w(E/L_v)$ is $+1$ for good and non-split multiplicative reduction and $-1$ for split multiplicative reduction and for archimedean places. Recall also that if the Kodaira type of $E/L_v$ is I$_n$ then the Tamagawa number is $n$ if the reduction is split, and 1 or 2 if it is non-split, depending on whether $n$ is odd or even, which we will denote by $\tilde{n}$. Finally, multiplicative reduction of type I$_n$ becomes of type I$_{en}$ after a ramified extension of degree $e$, split reduction remains split, and non-split reduction becomes split if the extension has even residue degree. We now tabulate the contribution to the above product from a prime $r$ depending on its ramification degree $e_r$ and residue degree $f_r$ in $F/\Q$; in $D_{2pq}$ these uniquely determine the inertia and decomposition subgroups, and hence the splitting behaviour of $r$ in all intermediate extension. The values are constrained by the fact that both the (tame!) inertia group is cyclic of order $e_r$ and normal in the decomposition group with a cyclic quotient of order $f_r$. The entries for split and non-split multiplicative reduction of type I$_n$ are separated by a ``;''.
$$
\hskip-1.2cm
\begin{array}{|rl|c|ll|ll|ll|c|c|c|}
\hline
e_r, &f_r & C_{v|r}(\Q)&C_{v|r}(L_p) &&C_{v|r}(L_q)&& C_{v|r}(L_{pq})&& q^{\theta^{(q)}_r} & p^{\theta^{(p)}_r} & Z_r \cr
\hline
1,&1 &  n;\tilde{n} &n^p;&\tilde{n}^p &  n^q;&\tilde{n}^q& n^{pq};&\tilde{n}^{pq} & 1;1 & 1;1 & \square \cr
1,&2 &  n;\tilde{n}&n^{\frac{p+1}2} ;&\tilde{n}n^{\frac{p-1}2}& n^{\frac{q+1}2} ;&\tilde{n}n^{\frac{q-1}2}& n^{\frac{pq+1}2} ;&\tilde{n}n^{\frac{pq-1}2} &q;q & p;p & pq\square  \cr
1,&p&  n;\tilde{n} &  n;&\tilde{n} &  n^q;&\tilde{n}^q&  n^q;&\tilde{n}^q & 1;1 & 1;1& \square\cr
1,&q&  n;\tilde{n} &  n;&\tilde{n} &  n^p;&\tilde{n}^p&  n^p;&\tilde{n}^p & 1;1 & 1;1& \square\cr
1,&pq&  n;\tilde{n}& n;&\tilde{n}& n;&\tilde{n}& n;&\tilde{n} & 1;1 & 1;1 & \square \cr
2,&1&  n;\tilde{n}& (2n)^{\frac{p-1}2}n;&2^{\frac{p-1}2}\tilde{n}&(2n)^{\frac{q-1}2}n;&2^{\frac{q-1}2}\tilde{n} &(2n)^{\frac{pq-1}2}n;&2^{\frac{pq-1}2}\tilde{n} & q;1 & p;1 & pq\square;\square\cr
p,&1& n;\tilde{n} &np;&\tilde{n}&n^q;&\tilde{n}^q&(np)^q;&\tilde{n}^q & 1;1 & 1;1 & \square \cr
p,&2&  n;\tilde{n}&np;&\tilde{n}&  n^{\frac{q+1}2} ;&\tilde{n}n^{\frac{q-1}2} &  (np)^{\frac{q+1}2} ;&\tilde{n}(np)^{\frac{q-1}2} & 1;1 & p;p & \square\cr
p,&q&  n;\tilde{n} & np;&\tilde{n} & n;&\tilde{n} & np;&\tilde{n} & 1;1 & 1;1 & \square\cr
q,&1& n;\tilde{n} &nq;&\tilde{n}&n^p;&\tilde{n}^p&(nq)^p;&\tilde{n}^p & 1;1 & 1;1 & \square \cr
q,&2&  n;\tilde{n}&nq;&\tilde{n}&  n^{\frac{p+1}2} ;&\tilde{n}n^{\frac{p-1}2} &  (nq)^{\frac{p+1}2} ;&\tilde{n}(nq)^{\frac{p-1}2} & 1;1 & q;q & \square\cr
q,&p&  n;\tilde{n} & nq;&\tilde{n} & n;&\tilde{n} & nq;&\tilde{n} & 1;1 & 1;1 & \square\cr
2p,&1&  n;\tilde{n} & np;&\tilde{n} & (2n)^{\frac{q-1}2}n;&2^{\frac{q-1}2}\tilde{n} &  (2np)^{\frac{q-1}2}np;&2^{\frac{q-1}2}\tilde{n} & 1;1 & p;1 & \square\cr
2q,&1&  n;\tilde{n} & np;&\tilde{n} & (2n)^{\frac{p-1}2}n;&2^{\frac{p-1}2}\tilde{n} &  (2nq)^{\frac{p-1}2}nq;&2^{\frac{p-1}2}\tilde{n} & 1;1 & q;1 & \square\cr
pq,&1&  n;\tilde{n} & np;&\tilde{n} & nq;&\tilde{n} & npq;&\tilde{n}& 1;1 & 1;1 & \square\cr
pq,&2&  n;\tilde{n}& np;&\tilde{n} & nq;&\tilde{n} & npq;&\tilde{n}  & 1;1 & 1;1 & \square\cr
\hline
\end{array}
$$
Finally, note that if the quadratic field $K$ is real, then $F/\Q$ is totally real, so $L_p$ has $p$ infinite places and $q^{\theta_\infty^{q}}=1$, and similarly for $L_q$; hence $Z_\infty=1$. If $K$ is imaginary, then $L_p$ has one real and $\frac{p-1}2$ (=odd) complex places and $q^{\theta_\infty^{q}}=q$, and similarly for $L_q$; hence $Z_\infty=pq$.
Thus, indeed, $Z_r=pq\square$ for $r\in X$ and $Z_r=\square$ for $r\notin X$, as required.
\end{proof}

\subsection{Summary of some basic properties}\label{ss:basic}

We list some standard results regarding elliptic curves over local and global fields. We give brief proofs as, while these results are well-known, they may not always be easy to find in the literature.

\begin{lemma}\label{lem:basics}
Let $E/K$ be an elliptic curve over a number field, $F/K$ a field extension of finite degree $d$. Let $v$ be a finite place of $K$ with $w|v$ a place above it in $F$, and $\omega_v$ and $\omega_w$ minimal differentials for $E/K_v$ and $E/F_w$, respectively.
\begin{enumerate}
\item\label{lem:basics-selmer} If $F/K$ is Galois, then $\Sel_{n}(E/K)$ is a subgroup of $\Sel_{n}(E/F)$ for all $n$ coprime~to~$d$.
\item\label{lem:basics-height} For $P, Q\in E(K)$, their N\'eron--Tate height pairings over $K$ and $F$ are related by $\langle P,Q\rangle_F=d\langle P,Q\rangle_K$.
\item\label{lem:basics-regulator} If $\rk E/F=\rk E/K$, then $\Reg_{E/F}\!=\!\frac{d^{\rk E/K}}{n^2} \Reg_{E/K}$, where $n$ is the index of $E(K)$ in $E(F)$.
\item\label{lem:basics-semistabletamagawa} If $E/K_v$ has good reduction then $c_v=1$. If $E/K_v$ has multiplicative reduction of Kodaira type $\text{I}_n$ then $n=\ord_v\Delta^{\text{min}}_{E,v}$ and $c_v=n$ if the reduction is split, and $c_v=1$ (respectively, $2$) if the reduction is non-split and $n$ is odd (respectively, even).
\item\label{lem:basics-semistableomega} If $E/K_v$ has good or multiplicative reduction then $|\omega_v/\omega_w|_w=1$.
\item\label{lem:basics-pgomega} If $E/K_v$ has potentially good reduction and the residue characteristic is not 2 or 3, then $|{\omega_v}/{\omega_w}|_w=q^{\lfloor\frac{e_{F/K}\ord_v\Delta_{E,v}^{\rm{min}}}{12}\rfloor}$, where $q$ is the size of the residue field at $w$.
\item\label{lem:basics-pm}  If $v$ has odd residue characteristic, $E/K_v$ has potentially multiplicative reduction and $F_w/K_v$ has even ramification degree, then $E/F_w$ has multiplicative reduction.
\item \label{lem:basics-m} Multiplicative reduction becomes split after a quadratic unramified extension.
\end{enumerate}
\end{lemma}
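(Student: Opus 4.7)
The plan is to dispatch these standard local and Galois-cohomological facts in three blocks. For (1), inflation-restriction on the Kummer sequence gives
\[0\to H^1(\Gal(F/K),E[n]^{G_F})\to H^1(K,E[n])\to H^1(F,E[n])^{\Gal(F/K)},\]
and the kernel is annihilated both by $n$ and by $d=|\Gal(F/K)|$, hence vanishes when $\gcd(n,d)=1$; compatibility of the local conditions is the same argument place by place. For (2), I would decompose the N\'eron--Tate height as a weighted sum of local heights $\lambda_v$, using $\lambda_w(P)=[F_w:K_v]\lambda_v(P)$ for $P\in E(K_v)\subseteq E(F_w)$ and $\sum_{w|v}[F_w:K_v]=[F:K]$ to obtain the global scaling factor $d$. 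Part (3) is then linear algebra: $E(K)/\tors\hookrightarrow E(F)/\tors$ has index $n$, so a basis of $E(K)/\tors$ is related to one of $E(F)/\tors$ by a matrix of determinant $\pm n$, contributing the factor $n^{-2}$ to the determinant of the Gram matrix, while (2) supplies the factor $d^{\rk}$.

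The local reduction statements (4)--(6) all follow from Tate's algorithm. The good reduction case of (4) is clear, and for $\text{I}_n$ reduction the component group is $\Z/n$ with Frobenius acting as $+1$ (split) or $-1$ (non-split), yielding the stated Tamagawa numbers. Part (5) uses that the minimal Weierstrass equation at $v$ stays minimal over $F_w$: in the good case the discriminant valuation is $0$, and in the $\text{I}_n$ case it equals $n$ over $K_v$ and $en$ over $F_w$, which already matches the minimal value for type $\text{I}_{en}$, so $\omega_v/\omega_w$ is a unit at $w$. For (6), admissible changes of Weierstrass equation scale the discriminant by a twelfth power, and the minimal discriminant valuation over $F_w$ must lie in $\{0,2,3,4,6,8,9,10\}$ (for potentially good reduction with residue characteristic $>3$) and be congruent to $e\delta\pmod{12}$; this forces the $w$-adic valuation of $\omega_v/\omega_w$ to equal $\lfloor e\delta/12\rfloor$, which under the conventions of Notation~\ref{def:omega} yields the stated formula.

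The main obstacle is (7), where the subtlety is that $G_{K_v}$ has two distinct ramified quadratic characters but only one quadratic character on inertia — so the argument must be carried out at the inertial level. I would proceed as follows: picking $\ell$ coprime to the (odd) residue characteristic, the $\ell$-adic representation of a potentially multiplicative $E$ is the twist of a multiplicative one by some quadratic character $\chi:G_{K_v}\to\{\pm 1\}$, and $E/F_w$ is multiplicative iff $\chi|_{I_w}$ is trivial. Either $\chi$ is unramified, in which case $E/K_v$ is already multiplicative and the claim is immediate, or $\chi|_{I_v}$ is the unique non-trivial order-two character of the pro-cyclic tame inertia $I_v^{\mathrm{tame}}$; in the latter case, the subgroup $I_w^{\mathrm{tame}}\subseteq I_v^{\mathrm{tame}}$ has index $e$, and whenever $e$ is even this subgroup lies in the unique index-two subgroup of $I_v^{\mathrm{tame}}$, killing $\chi|_{I_w}$. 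Finally (8) is immediate from the Tate uniformisation, since non-split multiplicative reduction is the quadratic twist of split multiplicative by the unramified quadratic character of $G_{K_v}$, which trivialises over the unramified quadratic extension.
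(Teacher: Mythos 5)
Your proposal is correct and follows essentially the same route as the paper, which proves (1) by the identical inflation--restriction argument and handles (2)--(8) by citing the same standard facts (height scaling from Tate, Tate's algorithm and minimality of Weierstrass models, the discriminant formula of Notation~\ref{def:omega}, and the Tate curve/quadratic twist description of potentially multiplicative reduction) that you spell out in detail. Your inertia-level treatment of (7) is a correct unpacking of the Tate-curve argument the paper merely references.
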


\begin{proof}
(\ref{lem:basics-selmer})
In the inflation-restiction sequence $H^1(\Gal(F/K),E(F)[n])\to H^1(K,E[n])\to H^1(F,E[n])$, 
the first term is killed both by $|\!\Gal(F/K)|$ and by $n$, and is therefore trivial. Thus the second map and its restriction to $n$-Selmer groups are injective.

(\ref{lem:basics-height}) This follows from the definition of the height pairing, see \cite{TateBSD} (1.6). (Note that it is {\em not} normalised as for the absolute height.)

(\ref{lem:basics-regulator}) Follows from (\ref{lem:basics-height}) and the fact that the height pairing is bilinear and non-degenerate.

(\ref{lem:basics-semistabletamagawa}) \cite{Silverman2} \S IV.9.

(\ref{lem:basics-semistableomega}) As $E/K_v$ has good or multiplicative reduction, its minimal Weierstrass model over~$K_v$ remains minimal over $F_w$, so $\omega_v$ is also a minimal differential over $F$.

(\ref{lem:basics-pgomega}) 
In this setting $\ord_w\Delta_{E,w}^{\text{min}}<12$, so the result follows from the formula in Notation~\ref{def:omega}.

(\ref{lem:basics-pm}) This follows from the theory of the Tate curve, see e.g. \cite{Silverman2} Exc.\ 5.11.

(\ref{lem:basics-m}) Clear from the definition of non-split multiplicative reduction.
\end{proof}


\section{Arithmetically similar twists with different $L$-values}\label{s:hanneke}

In this section we discuss the problem of formulating a precise Birch--Swinnerton-Dyer type formula for twists of elliptic curves by Dirichlet characters $\chi$. We will make the information that we know about $\sL(E,\chi)$ explicit and discuss the difficulties illustrated in Example \ref{ex:introha}. We will also give many numerical examples, for the benefit of those readers who may wish to analyse these $L$-values in more detail. 

The numerical examples throughout this section were worked out using Magma \cite{Magma}. The orders of $\sha$ given are strictly speaking ``analytic orders of $\sha$'', that is the orders that are predicted by the Birch--Swinnerton-Dyer conjecture. 

\begin{notation}
Recall from Notation \ref{not:conventions} that we identify Dirichlet characters $\chi$ 
with their corresponding $1$-dimensional Galois representations. We write $K^\chi$ for the abelian number field cut out by the kernel of $\chi$, that is for the smallest extension $K^\chi/\Q$ such that $\chi$ factors through $\Gal(K^\chi/\Q)$. 
\end{notation}

In the context of Dirichlet characters, we already know from Theorem \ref{thm:introlfunctions} a substantial amount about $L(E,\chi,1)$ in terms of arithmetic data:

\begin{theorem}\label{thm:hapredict}
Suppose Stevens's Manin constant conjecture holds for $E/\Q$. Let $\chi$ be a non-trivial primitive Dirichlet character of order $d$ and conductor coprime to $\mathfrak{f}_E$. Then $\sL(E,\chi)\in \Z[\zeta_d]$ and, if $L(E,\chi,1)\neq 0$, then furthermore
$$
\zeta\cdot\sL(E,\chi)\in \R, \qquad\quad \text{for }\> \zeta=\chi(\f_E)^{\frac{d+1}{2}}\sqrt{ \chi(-1) w_E}.
$$ 
If $\rk E/\Q=0$ and the Birch--Swinnerton-Dyer conjecture holds for $E$ over $\Q$ and $K^\chi$, then
$$
 N_{\Q(\zeta_d)^+/\Q} (\zeta\cdot \sL(E,\chi)) = 
 \pm \frac
 {|E(\Q)_{\tors}|}
 {|E(K^\chi)_{\tors}|}
 \sqrt{\frac
 {|\sha_{E/K^\chi}|\prod_v c_v(E/K^\chi)}
 {|\sha_{E/\Q}|\prod_p c_p(E/\Q)}
 }.
$$
If moreover $d$ is odd and $\BSD(E/K^\chi)=\BSD(E/\Q)$ then $\sL(E,\chi)=\zeta^{-1}u$ for some unit $u\!\in\!\cO_{\Q(\zeta_d)^+}^\times$.
\end{theorem}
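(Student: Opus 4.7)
The plan is to set $u := \zeta \cdot \sL(E,\chi)$ and show directly that $u \in \cO_{\Q(\zeta_d)^+}^\times$; the identity $\sL(E,\chi) = \zeta^{-1}u$ is then automatic. Three properties need verification: $u$ is an algebraic integer, $u$ lies in $\Q(\zeta_d)^+$, and $N_{\Q(\zeta_d)^+/\Q}(u) = \pm 1$.

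First I would check that $\zeta$ itself is a root of unity. Since $d$ is odd, $\mu_d \cap \{\pm 1\} = \{1\}$, forcing $\chi(-1) = 1$; hence $\zeta = \chi(\f_E)^{(d+1)/2}\sqrt{w_E}$ is the product of an element of $\mu_d \subset \Q(\zeta_d)^\times$ with a fourth root of unity. In particular $\zeta$ is an algebraic unit, so $u$ is an algebraic integer by the integrality assertion $\sL(E,\chi) \in \Z[\zeta_d]$ already established. The reality $u \in \R$ is also already given, and together with $\zeta \in \Q(\zeta_d)$ (which is built into the $\Q(\zeta_d)^+$-norm formula of the previous step via Theorem~\ref{thm:introlfunctions}(12); the marginal case $w_E = -1$ is handled by Theorem~\ref{thm:introlfunctions}(13) and the observation that $u$ is fixed by $\Gal(\Q(\zeta_{4d})/\Q(\zeta_d))$), this gives $u \in \cO_{\Q(\zeta_d)^+}$.

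Next I would specialise the norm formula using $\BSD(E/K^\chi) = \BSD(E/\Q)$. Since this norm formula is applied in the rank-zero regime (the regulators cancel in its derivation), both $\Reg_{E/\Q}$ and $\Reg_{E/K^\chi}$ equal $1$. Because $\f_\chi$ is coprime to $\f_E$, every bad prime of $E$ is unramified in $K^\chi/\Q$; since the Kodaira type (and hence $\ord\Delta^{\min}$) is preserved under unramified base change, $|\omega/\omega_v^{\min}|_v = 1$ for every place $v$ of $K^\chi$, so $C_{E/L}$ collapses to $\prod_v c_v(E/L)$ for both $L = \Q$ and $L = K^\chi$. The hypothesis $\BSD(E/K^\chi) = \BSD(E/\Q)$ therefore reads
$$
\frac{|\sha_{E/K^\chi}|\prod_v c_v(E/K^\chi)}{|E(K^\chi)_{\tors}|^2} \;=\; \frac{|\sha_{E/\Q}|\prod_p c_p(E/\Q)}{|E(\Q)_{\tors}|^2},
$$
which when substituted into the norm formula collapses its right-hand side to $\pm 1$. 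An algebraic integer in $\cO_{\Q(\zeta_d)^+}$ with absolute norm $\pm 1$ is a unit, so $u \in \cO_{\Q(\zeta_d)^+}^\times$, completing the proof.

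The only substantive content is the bookkeeping in the last paragraph, namely verifying that the coprimality of conductors eliminates all $|\omega/\omega_v^{\min}|_v$ contributions so that the BSD-quotient equality exactly cancels the torsion factor in the norm formula. Integrality, reality and the root-of-unity nature of $\zeta$ are immediate appeals to earlier parts of the theorem, making the whole statement essentially a short corollary of the preceding one.
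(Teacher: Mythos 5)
Your proposal follows the paper's route almost exactly: integrality, reality and the root-of-unity nature of $\zeta$ are read off from Theorem~\ref{thm:introlfunctions}(8)--(10), the norm formula comes from Theorem~\ref{thm:introlfunctions}(12) once the coprimality of $\f_\chi$ and $\f_E$ kills every $|\omega/\omega_v^{\min}|_v$ factor, and the final claim reduces to observing that $u=\zeta\cdot\sL(E,\chi)$ is a real algebraic integer of norm $\pm1$. The bookkeeping in your last paragraph (regulators equal to $1$ in the rank-zero regime, the $\BSD$-quotient equality cancelling the torsion factor) is correct and matches the paper.

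The one step that is wrong as written is your treatment of the ``marginal case $w_E=-1$''. For odd $d$ one has $\chi(-1)=1$ and $i\notin\Q(\zeta_d)$, so if $w_E$ were $-1$ then $\zeta\notin\Q(\zeta_d)$, and $u=\zeta\cdot\sL(E,\chi)$ would lie in $\zeta\cdot\Q(\zeta_d)$, which meets $\Q(\zeta_d)$ only in $0$; thus $u$ is \emph{not} fixed by $\Gal(\Q(\zeta_{4d})/\Q(\zeta_d))$ when $\sL(E,\chi)\neq0$, and the appeal to Theorem~\ref{thm:introlfunctions}(13) does not deliver $u\in\cO_{\Q(\zeta_d)^+}$. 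The correct resolution is that the case is vacuous: under the hypotheses of the final claim, $\rk E/\Q=0$ together with the Birch--Swinnerton-Dyer conjecture over $\Q$ gives $L(E,1)\neq0$, so the functional equation forces $w_E=+1$ (and hence $\zeta=\pm\chi(\f_E)^{(d+1)/2}\in\Q(\zeta_d)$). This is exactly how the paper disposes of it, and with that one-line replacement your argument is complete.
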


\begin{proof}
The first claim follows from Theorem \ref{thm:introlfunctions} (8,9,10) with $\rho=\chi$.

Applying Theorem~\ref{thm:introlfunctions}(12) with the identity $\bigoplus_{\fg\in\Gal(\Q(\zeta_d)/\Q)}\chi^\fg \oplus\triv = \C[G]$ shows that $ N_{\Q(\zeta_d)^+/\Q} (\zeta\cdot \sL(E,\chi)) = \pm \sqrt\frac{\BSD(E/K^\chi)}{\BSD(E/\Q)}$. Since the conductor of $E$ is coprime to that of $\chi$, the primes of bad reduction of $E$ are unramified in $K^\chi/\Q$, so a global minimal differential for $E/\Q$ remains minimal over $K^\chi$ and hence all the contributions of the form $|\omega/\omega^{\min}|$ to the $\BSD$-terms are trivial. This proves the desired second formula.

For the final claim, note that as $\rk E/\Q\! =\! 0$ and $d$ is odd, we must have $w_E\!=\!\chi(-1)\!=\!1$, so that $\zeta\in\Q(\zeta_d)$. The result now follows from the previous parts.

\end{proof}

Under the above assumptions, we can predict the value $L(E,\chi,1)$ from Birch--Swinnerton-Dyer type information up to an element of norm $\pm 1$ in $\Q(\zeta_d)^+$. In fact, since $\sL(E,\chi)$ is integral, the prediction is stronger than that. For instance, if $\chi$ has order 3 and $\BSD(E/K^\chi)=\BSD(E/\Q)$ then $L(E,\chi,1)$ is fully determined up to a sign. However, this final ambiguity appears to be severe:

\begin{theorem}\label{thm:noway}
For elliptic curves $E/\Q$ and Dirichlet characters $\chi$ as in Theorem \ref{thm:hapredict}, 
\begin{enumerate}[leftmargin=*]
\item 
$\sL(E,\chi)$ cannot be expressed purely as a function of $\chi$, of $E(\Q)$, $\sha_{E/\Q}$, $\prod_p E(\Q_p)/E_0(\Q_p)$ as abelian groups and of $E(K^\chi), \sha_{E/K^\chi}, \prod_v E(K^\chi_v)/E_0(K^\chi_v)$ as $\Gal(K^\chi/\Q)$-modules. 
\item The fractional ideal $(\sL(E,\chi))$  cannot be expressed purely as a function of $\chi$, and~of~$E(\Q)$, $\sha_{E/\Q}$, $\prod_p E(\Q_p)/E_0(\Q_p)$, $E(K^\chi), \sha_{E/K^\chi}$ and $\prod_v E(K^\chi_v)/E_0(K^\chi_v)$ as abelian groups. 
\end{enumerate}
Here the products are taken over all primes of $\Q$ and of $K^\chi$, and $E_0$ denotes the usual subgroup of points of non-singular reduction.
\end{theorem}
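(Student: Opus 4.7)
Both claims are non-existence assertions, so the plan is to prove each by exhibiting an explicit pair of elliptic curves for which all the listed arithmetic invariants coincide but the $L$-values (respectively, the ideals they generate) do not.

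For (1), I use Example \ref{ex:introha}: the pair $E=$ 307a1, $E'=$ 307c1 together with the Dirichlet character $\chi$ of order $5$ and conductor $11$. That example already asserts that both curves have trivial Mordell--Weil group over $\Q$, trivial $\sha_{E/\Q}$, and trivial Tamagawa numbers, and that the same triviality holds over $K^\chi=\Q(\zeta_{11})^+$ for $\sha$ and local factors. The only additional check is that $E(K^\chi)$ and $E'(K^\chi)$ are themselves trivial. This reduces to the vanishing of rank growth over $K^\chi$ (which follows from $L(E,\chi,1)\neq 0$ together with the analytic-rank-zero Birch--Swinnerton-Dyer conjecture for abelian twists, known by work of Kato) and to the absence of new torsion in the cyclic quintic extension $K^\chi/\Q$ (checked by reducing modulo a prime of good reduction that splits completely in $K^\chi$). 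With this, all $K^\chi$-side data vanish identically for both curves and thus agree trivially as Galois modules, yet $\sL(E,\chi)=1$ differs from $\sL(E',\chi)=(\frac{1\pm\sqrt 5}{2})^2$, completing (1).

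For (2), the same example fails because $\frac{1+\sqrt 5}{2}$ is a unit in the ring of integers of $\Q(\zeta_5)^+$, so both $\sL$-values generate the unit ideal. I would instead search among the further tables of examples computed later in this section for a pair of curves whose $\sL$-values generate distinct fractional ideals while the listed arithmetic data still match as abelian groups. This is a strictly weaker matching requirement than that of (1), so such a pair should be locatable; the norm prediction of Theorem \ref{thm:hapredict} only pins down the norm $N_{\Q(\zeta_d)^+/\Q}(\zeta\cdot \sL(E,\chi))$, leaving ambiguity by elements of relative norm $\pm 1$ and hence genuine freedom in the ideal class.

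The hard point is (2): the constraints of Theorem \ref{thm:hapredict} force significant coincidences between any two such examples, so it is not a priori obvious that the remaining freedom can produce distinct fractional ideals without simultaneously perturbing some abelian-group invariant (for instance, by forcing a prime of bad reduction to ramify in $K^\chi$, which would change the local factors). For (1), the Galois-module matching is almost vacuous (everything vanishes over $K^\chi$) and only the torsion/rank check is nontrivial.
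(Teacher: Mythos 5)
Your treatment of part (1) is exactly the paper's: it uses Example \ref{ex:ha5} (the pair 307a1, 307c1 with the order-$5$ character of conductor $11$), for which the paper records directly that $|E_i(K^\chi)|=1$ along with the triviality of $\sha$ and the Tamagawa products over both $\Q$ and $K^\chi$; so all the listed data vanish and the distinct values $\sL(E_1,\chi)=1$ and $\sL(E_2,\chi)=\zeta_5^4(1+\zeta_5)^2$ settle (1). Your extra verification of $E(K^\chi)=0$ is harmless but not where the content lies — in the paper this is simply part of the computed data of the example.

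For part (2), however, your proof is incomplete: a non-existence statement proved by counterexample requires the counterexample to be exhibited, and you stop at ``such a pair should be locatable.'' The paper supplies it as Example \ref{ex:haprime}: $E_1=291\mathrm{d}1$ and $E_2=139\mathrm{a}1$ with the order-$5$ character $\chi$ of conductor $31$ determined by $\chi(3)=\zeta_5^3$. For both curves all the listed invariants are trivial except $|\sha_{E_i/K^\chi}|=11^2$; since these groups have the same order (and, granting finiteness, the alternating Cassels--Tate pairing forces each to be $(\Z/11)^2$), they agree \emph{as abelian groups}, which is all that (2) demands. Yet $(\sL(E_1,\chi))=\p_1\p_2$ while $(\sL(E_2,\chi))=\p_3\p_4$, where the $\p_i$ are the four primes of $\Q(\zeta_5)$ above $11$. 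Your concern that the constraints of Theorem \ref{thm:hapredict} might rule out such a pair is addressed by Remark \ref{rem:primesplit}: integrality, the norm condition and stability under complex conjugation pin the ideal down only to the two-element set $\{\p_1\p_2,\,\p_3\p_4\}$, and both possibilities genuinely occur. Note also that the reason (2) is stated with abelian-group (rather than Galois-module) matching is precisely that the Galois module structure of $\sha_{E/K^\chi}$ plausibly does determine the factorisation, as the paper remarks; so one should not expect, and does not need, the stronger Galois-module coincidence in this example.
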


This theorem follows from the fact that one can find curves with identical arithmetic invariants listed in (1) and (2), but with different modified $L$-values $\sL(E,\chi)$. This is shown by the next two examples, where most of the objects listed in (1) and (2) are trivial. 

\begin{example}\label{ex:ha5}
Let $E_1/\Q$ be the elliptic curve given by
\[
y^2 + y = x^3 - 8x - 9,
\]
and $E_2/\Q$ be another elliptic curve given by
\[
y^2 + y = x^3 + x - 1,
\]
which have Cremona labels 307a1 and 307c1, respectively. 
Let $\chi$ be the primitive Dirichlet character of order 5 and conductor 11 defined by $\chi(2)=\zeta_5$. 
Both curves have 
$$
 |E_i(\Q)|=|E_i(K^\chi)|=|\sha_{E_i/\Q}|=|\sha_{E_i/K^\chi}|=\prod_p c_p(E_i/\Q)=\prod_v c_v(E_i/K^\chi)=1.
$$
In particular, all the groups listed in Theorem \ref{thm:noway} are trivial.
In fact, the curves also have the same conductor $\mathfrak{f}_{E_i}=307$ and the same discriminant $\Delta_{E_i}=-307$. 
However, their modified $L$-values differ:
\begin{align*}
\sL(E_1,\chi)=1, \quad \sL(E_2,\chi)=\zeta_5^4 (1+\zeta_5)^2.
\end{align*}
\end{example}

\begin{remark}
As the discriminants for the two curves in the above example are the same and thus in particular have the same sign, both curves have the same number of connected components over $\R$. In other words, one can add the group of real connected components $E(\R)/E_0(\R)$ to the list of groups in Theorem~\ref{thm:noway}(1), as well as the conductor and the discriminant of $E$.
\end{remark}

\begin{example}\label{ex:haprime}
Let $E_1/\Q$ be the elliptic curve given by
\[
y^2+y=x^3-x^2-1,
\]
and $E_2/\Q$ be another elliptic curve given by
\[
y^2+xy = x^3 +x^2-3x-4,
\]
which have Cremona labels 291d1 and 139a1, respectively. Let $\chi$ be the primitive character of order five and conductor 31 defined by ${\chi(3)=\zeta_5^3}$. 
Both curves have 
$$
 |E_i(\Q)|=|E_i(K^\chi)|=|\sha_{E_i/\Q}|=\prod_p c_p(E_i/\Q)=\prod_v c_v(E_i/K^\chi)=1 \quad\text{and}\quad |\sha_{E_i/K^\chi}|=11^2.
$$
The discriminants $\Delta_{E_1}=-291$ and $\Delta_{E_2}=-139$ again have the same sign. For these curves,
$$
  \sL(E_1,\chi)=2\zeta_5^3 -\zeta_5^2 - \zeta_5 + 2 \qquad \text{and}\qquad \sL(E_2,\chi)=5\zeta_5^3 + \zeta_5^2 + \zeta_5 + 5.
$$
These factorise as
$$
  (\sL(E_1,\chi))=\p_1 \p_2 \qquad\text{and}\qquad (\sL(E_2,\chi))=\p_3 \p_4,
$$
where
$\p_1=(11,7+\zeta_5)$, $\p_2=(11,8+\zeta_5)$, $\p_3=(11,6+\zeta_5)$, $\p_4=(11,2+\zeta_5)$ are the primes of $\Q(\zeta_5)$ above 11.

We note that it is plausible that the exact factorisation can be recovered from the Galois module structure of $\sha$. Unfortunately, it appears to be beyond our computational reach to check this at present. (See, however, the recent work of Burns and Castillo \cite{BurnsCastillo} Rmk. 7.4.)
\end{example}

\begin{remark}\label{rem:primesplit}
In the above example, our results on $L$-values are strong enough to predict that the ideal $\sL(E_i,\chi)$ must be either $\p_1 \p_2$ or $\p_3 \p_4$, though, as the example illustrates, they do not allow us decide which of the two occurs. To see why the factorisation must be one of these two, consider any Dirichlet character $\chi$ of order 5 and any elliptic curve $E/\Q$ satisfying the conditions of Theorem \ref{thm:hapredict} and additionally $\frac{\BSD(E/K^\chi)}{\BSD(E/\Q)}=11^2$.
Then by Theorem \ref{thm:introlfunctions} (10), (11) and (6), $\sL(E,\chi)$ is an element of $\Z[\zeta_5]$ of norm $11^2$ and generates an ideal that is fixed by complex conjugation. Hence $(\sL(E,\chi))$ must be either $\p_1 \p_2$ or $\p_3 \p_4$.
\end{remark}

For those who may be interested in investigating these $L$-values further, we end by giving a range of further examples. All elliptic curves below are given by their Cremona labels.

\begin{example}
There are plenty of curves that have trivial Mordell--Weil groups, $\sha$ and Tamagawa numbers both over $\Q$ and over $K^\chi$ for the same Dirichlet character $\chi$ of order 5  as in Example \ref{ex:ha5}. Here we have chosen some groups of such curves that also have the same conductors, but, as in the example, have different modified $L$-values  (here $u=1+\zeta_5$ is a fundamental unit in $\Q(\zeta_5)$). 

\begingroup\smaller[1]
\begin{table}[h!]
\begin{tabular}{||cc||cc||cc||cc||cc||}
\hline
$E$ & $\!\!\!\sL(E,\chi)\!\!\!$ & $E$ & $\!\!\!\sL(E,\chi)\!\!\!$ & $E$ & $\!\!\!\sL(E,\chi)\!\!\!$  & $E$ & $\!\!\!\sL(E,\chi)\!\!\!$ & $E$ & $\!\!\!\sL(E,\chi)\!\!\!$\\
\hline
307a1 & $1$ & 432g1 &$u^2$& 714b1 & $1$ & 1187a1&   $\zeta_5^3 u^{-1}$ & 1216g1 & $-\zeta_5^2 u^2$ \\
307c1 & $\zeta_5^4 u^2$ &  432h1 & $-\zeta_5^4 u^{-1}$ & 714h1 & $-\zeta_5u^3$ &1187b1&   $\zeta_5^4 u^{-3}$ & 1216k1 & $\zeta_5  u^{-1}$ \\
\hline
\end{tabular}
\caption{Conductor $\f_\chi=11$ with $\chi(2)=\zeta_5, \Delta_{K^{\chi}}=11^4$.}
\end{table}
\endgroup
\end{example}

\begin{example}
The examples are even easier to find for cubic characters $\chi$. As before, we will look at curves with
$$
 |E(\Q)|=|E(K^\chi)|=|\sha_{E/\Q}|=|\sha_{E/K^\chi}|=\prod_p c_p(E/\Q)=\prod_v c_v(E/K^\chi)=1.
$$
All of the curves we look at will satisfy the conditions of Theorem \ref{thm:hapredict}, and thus by the same theorem we can predict the $L$-values up to sign. 
How to predict the sign is unclear, even for curves with the same conductor.

\begingroup\smaller[2]
\begin{table}[H]
\begin{tabular}{||cc||cc||cc||cc||cc||}
\hline
$E$ & $\!\!\!\sL(E,\chi)\!\!\!$ & $E$ & $\!\!\!\sL(E,\chi)\!\!\!$ & $E$ & $\!\!\!\sL(E,\chi)\!\!\!$ & $E$ & $\!\!\!\sL(E,\chi)\!\!\!$ & $E$ & $\!\!\!\sL(E,\chi)\!\!\!$ \\
\hline
\hline
1356d1 &$\zeta_3$& 3264r1 & $-\zeta_3$ & 3540a1&   $-\zeta_3$ & 4800i1 & $-\zeta_3$ && \\
1356f1 &$-\zeta_3$&  3264s1& $\zeta_3$& 3540b1 & $\zeta_3$& 4800bj1& $-\zeta_3$ &&\\
& & &&&& 4800bm1& $ \zeta_3$ &&\\
\hline
 \multicolumn{10}{|c|}{Conductor $\f_\chi=7$ with $\chi(3)=\zeta_3^2$, $\Delta_{K^{\chi}}=49$.}\\
\hline
\hline
222b1 & $-1$ &  1392c1 & $-1$ & 4386c1&   $-1$ & 9024l1 & $-\zeta_3^2$  && \\
222e1 & $1$ &  1392j1& $1$& 4386m1 & 1 & 9024bf1& $\zeta_3^2$ & &\\
\hline
 \multicolumn{10}{|c|}{Conductor $\f_\chi=13$ with $\chi(2)=\zeta_3^2$, $\Delta_{K^{\chi}}=169$.}\\
\hline
\hline
702d1 & $-1$ & 1443a1 &$1$& 5616j1 & $-1$ & 12096bq1&   $1$ & 19008u1 & $-1$  \\
702i1 & $1$ & 1443b1 &$-1$& 5616o1 & $1$ & 12096dc1&   $-1$ & 19008bh1 & $1$   \\
 & &  && 5616p1 & $1$ & 12096dd1&   $1$ &&   \\
\hline
  \multicolumn{10}{|c|}{Conductor $\f_\chi=19$ with $\chi(2)=\zeta_3^2$, $\Delta_{K^{\chi}}=381$.}\\
\hline
\hline
714b1 & $-1$ &  2453a1 & $1$ & 8138b1 & 1 &  12096x1 & $\zeta_3$  &&\\
714h1 & $1$ &  2453c1 & $-1$ & 8138c1 & $-1$ &  12096dc1 & $\zeta_3$ &&\\
& & &&&& 12096dd1& $ -\zeta_3$ && \\
\hline
  \multicolumn{10}{|c|}{Conductor $\f_\chi=31$ with $\chi(3)=\zeta_3$, $\Delta_{K^{\chi}}=961$.}\\
\hline
\hline
5885a1 & $-\zeta_3$ & 11764a1 &$-\zeta_3$& 12096x1 & $\zeta_3^2$ & 15498h1 & $-\zeta_3^2$ & 16590c1&   $1$  \\
5885d1 & $\zeta_3$ & 11764b1 &$\zeta_3$& 12096bb1 & $-\zeta_3^2$ & 15498i1 & $\zeta_3^2$ & 16590n1&   $-1$  \\
 &  &  && 12096bn1 & $\zeta_3^2$ &  &  & &    \\
 &  &  && 12096cz1 & $-\zeta_3^2$ &  &  & &     \\
\hline
    \multicolumn{10}{|c|}{Conductor $\f_\chi=37$ with $\chi(2)=\zeta_3$, $\Delta_{K^{\chi}}=1369$}\\
\hline
\end{tabular}
\phantom{$X^x$}
\caption{Modified $L$-values for varying Dirichlet characters $\chi$ of order three.}
\end{table}
\endgroup

\par In these examples, the curves in each block also have discriminants of the same sign as each other and the same number of points over $\F_3$. The first condition ensures that they have the same number of real components. The second condition is motivated by $p$-adic $L$-functions, where the interpolation formula for $L$-values is adjusted by an extra term that depends on $|E(\F_p)|$.
\end{example}

\begin{example}
Here we give a list of curves similar to Example \ref{ex:haprime}. We again take the character $\chi$ of order five and conductor 31 defined by ${\chi(3)=\zeta_5^3}$, and consider curves with conductor coprime to 31 with
$$
 |E(\Q)|=|E(K^\chi)|=|\sha_{E/\Q}|=\prod_p c_p(E/\Q)=\prod_v c_v(E/K^\chi)=1 \quad\text{and}\quad |\sha_{E/K^\chi}|=11^2.
$$
We know from Remark \ref{rem:primesplit} that the ideal $(\sL(E,\chi))$ of $\cO_{\Q(\zeta_5)}$ is either $\p_1\p_2$ or $\p_3\p_4$, where $\p_1=(11,7+\zeta_5), \p_2=(11,8+\zeta_5),\p_3=(11,6+\zeta_5), \p_4=(11,2+\zeta_5)$ are the primes of $\Q(\zeta_5)$ above 11. For the following list of curves list $\sL(E,\chi)$ splits as $\p_1 \p_2$: 216b1, 216c1, 291d1, 443c1, 475a1. For the following list of curves $\sL(E,\chi)$ splits as $\p_3 \p_4$: 139a1, 140b1, 267b1, 333d1, 378h1, 432g1, 579a1. 
\end{example}



\begin{thebibliography}{99}

\bibitem{Bartel}
A. Bartel, Large Selmer groups over number fields, Math. Proc. Cambridge Philos. Soc. 148 (2010), no.~1, 73--86. 

\bibitem{Magma}
W. Bosma, J. Cannon, C. Playoust,
The Magma algebra system. I: The user language, J. Symb. Comput. 24, No. 3--4 (1997), 235--265.

\bibitem{BurnsCastillo}
D. Burns, D. M. Castillo,
On refined conjectures of Birch and Swinnerton-Dyer type for Hasse--Weil--Artin $L$-series,
arxiv: 1909.03959.


\bibitem{Coates1991}
J. Coates, Motivic $p$-adic $L$-functions, in: $L$-functions and arithmetic (Durham, 1989), 141--172, London Math. Soc. Lecture Note Ser., 153, Cambridge Univ. Press, Cambridge, 1991.


\bibitem{Dokchitser2005} 
V. Dokchitser, Root numbers of non-abelian twists of elliptic curves, Proc. London Math. Soc. (3) 91 (2005), 300--324.

\bibitem{Dokchitsers2009} 
T. Dokchitser, V. Dokchitser, Regulator constants and the parity conjecture, Invent. Math. 178 (2009), 23--71.

\bibitem{Deligne1979}
P. Deligne, Valeurs de fonctions L et p\'{e}riodes d'int\'{e}grales, in: Automorphic forms, representations and L-functions, Part 2 (ed. A. Borel and W. Casselman), Proc. Symp. in Pure Math. 33 (AMS, Providence, RI, 1979) 313--346.

\bibitem{Martinet1977}
J. Martinet, Character Theory and Artin L-functions, in: Algebraic Number Field (ed. A. Frohlich), Proc. of the Durham Symp. (Academic Press, London, 1975).

\bibitem{Matsuno}
K. Matsuno, Elliptic curves with large Tate--Shafarevich groups over a number field,
Math. Research Letters 16 (2009), no. 3, 449--461.

\bibitem{Rohrlich1990}
D. Rohrlich, The vanishing of certain Rankin-Selberg convolutions, in: Automorphic Forms and Analytic Number Theory, 123--133. Les publications CRM, Montreal (1990).

\bibitem{Silverman2}
J. H. Silverman, Advanced topics in the arithmetic of elliptic curves, GTM 151, Springer, Berlin (1994).

\bibitem{Stevens}
G. Stevens,  Stickelberger elements and modular parametrizations of elliptic curves,  Invent. Math. 98 (1989), no. 1, 75--106.

\bibitem{TateBSD}
J. Tate , On the conjectures of Birch and Swinnerton--Dyer and a geometric analog, S\'eminaire Bourbaki, 18e ann\'ee, 1965/66, no. 306.

\bibitem{TatN}
J. Tate, Number theoretic background, in: Automorphic forms, representations and L-functions, Part 2 (ed. A. Borel and W. Casselman), Proc. Symp. in Pure Math. 33 (AMS, Providence, RI, 1979) 3--26.

\bibitem{Venjakob2007}
O. Venjakob, From the Birch and Swinnerton-Dyer conjecture over the equivariant Tamagawa number conjecture to non-commutative Iwasawa theory - a survey, in: L-functions and Galois representations, London Math. Soc. Lecture Note (Cambridge University Press, 2007) 333--380.

\bibitem{WW}
H. Wiersema, C. Wuthrich, Integrality of twisted L-values of elliptic curves, arxiv:2004.05492.

\end{thebibliography}
\end{document}